\renewcommand{\geq}{\geqslant}
\renewcommand{\leq}{\leqslant}
\renewcommand{\epsilon}{\varepsilon}
\let\OLDthebibliography\thebibliography
\renewcommand\thebibliography[1]{
  \OLDthebibliography{#1}
  \setlength{\parskip}{0pt}
  \setlength{\itemsep}{3pt plus 0.3ex}
}
\definecolor{darkgreen}{rgb}{0,0.4,0}
\definecolor{MyDarkBlue}{rgb}{0,0.08,0.85}
\definecolor{BrickRed}{rgb}{0.8,0.08,0}
\newtheorem{theorem}{Theorem}
\newtheorem{lemma}[theorem]{Lemma}
\newtheorem{proposition}[theorem]{Proposition}
\author[1]{Viet Hung Hoang}
\author[2,3]{Kilian Raschel}
\affil[1]{Faculty of Fundamental Science, Industrial University of Ho Chi Minh City, Vietnam, \href{mailto:hoangviethung@iuh.edu.vn}{\texttt{hoangviethung@iuh.edu.vn}}}
\affil[2]{Université d'Angers, CNRS, France, \href{mailto:raschel@math.cnrs.fr}{\texttt{raschel@math.cnrs.fr}}}
\affil[3]{International Research Laboratory France-Vietnam in Mathematics and its Applications, CNRS - VAST - VIASM}
\title{\vspace{-1cm}\textbf{Boundary contacts for reflected random walks in the quarter plane}}
\begin{document}
\maketitle

\begin{abstract}
We investigate reflected random walks in the quarter plane, with particular emphasis on the time spent along the reflection boundary axes. Assuming the drift of the random walk lies within the cone, the local time converges — without the need for normalization — to a limiting random variable as the walk length tends to infinity. This paper focuses on the properties of these discrete limiting variables. The problem is rooted in probability theory but also has natural connections to statistical physics and analytic combinatorics. We present two main sets of results, each based on different assumptions regarding the random walk parameters. First, when the reflections on the horizontal and vertical boundaries are assumed to be similar, we reveal the recursive structure of the problem through a coupling approach. Second, in the case of more general reflection rules but singular random walks, we derive an explicit closed-form expression for the limiting distribution using the compensation approach. Our results are illustrated via concrete computations on various examples.
\end{abstract}

\section{Introduction and main results}

In this paper, we study a class of reflected random walks confined to cones, with particular attention to their local times on the boundary. Such problems have a rich history in probability theory and are closely linked to various other areas, which we briefly outline below.

To present the problem, consider a simple random walk on the integers, with jumps of $+1$ and $-1$, as shown on the left of Figure~\ref{fig:RW_dim1}. In this context, we introduce the local time $Z_0(n)$ at $0$, which is the amount of time that the random walk spends at $0$ (starting at $0$, say) before time $n\geq0$. If the random walk is balanced, i.e., if the probabilities of moving up and down are both $\frac{1}{2}$, then it is well known that the random variable $\frac{Z_0(n)}{\sqrt{n}}$ converges in distribution to a half-normal distribution. On the other hand, if the random walk has a non-zero drift, then $Z_0(n)$ converges to a geometric random variable without normalisation. These results admit various broad generalisations in higher-dimensional lattices and for more general random walk distributions. Next, it is natural to consider inhomogeneous models of random walks, such as reflected random walks; see the right side of Figure~\ref{fig:RW_dim1}. A key issue is understanding how reflection at $0$ affects the distribution of local time. In some simple examples, if the random walk has a positive drift, the distribution may reduce to a geometric one.

\begin{figure}
\begin{center}
  \begin{tikzpicture}[
    x=5.5mm,
    y=5.5mm,
  ]

    \draw[thin]
      \foreach \x in {0, ..., 10} {
        (\x, -3) -- (\x, 6)
      }
      \foreach \y in {-2, ..., 5} {
        (0, \y) -- (11, \y)
      }
    ;  

    \begin{scope}[
      semithick,
      ->,
      >={Stealth[]},
    ]
      \draw (0, -3.5) -- (0, 6.5);
      \draw (0, 0) -- (11.5, 0);
    \end{scope}
  \draw[thick] (0,0) -- (1,1) -- (2,2) -- (3,1) -- (4,0) -- (5,-1) -- (6,0) -- (7,-1) -- (8,-2) -- (9,-1) -- (10,0) ;
\filldraw[red] (0,0) circle (2.5pt);
\filldraw[blue] (1,1) circle (2.5pt);
\filldraw[blue] (2,2) circle (2.5pt);
\filldraw[blue] (3,1) circle (2.5pt);
\filldraw[red] (4,0) circle (2.5pt);
\filldraw[blue] (5,-1) circle (2.5pt);
\filldraw[red] (6,0) circle (2.5pt);
\filldraw[blue] (7,-1) circle (2.5pt);
\filldraw[blue] (8,-2) circle (2.5pt);
\filldraw[blue] (9,-1) circle (2.5pt);
\filldraw[red] (10,0) circle (2.5pt);
    \end{tikzpicture}\qquad
  \begin{tikzpicture}[
    x=5.5mm,
    y=5.5mm,
  ]

     \draw [fill=white,gray!20,draw opacity=1] (0,0) rectangle (11,-3);
    \draw[thin]
      \foreach \x in {0, ..., 10} {
        (\x, -3) -- (\x, 6)
      }
      \foreach \y in {-2, ..., 5} {
        (0, \y) -- (11, \y)
      }
    ;  

    \begin{scope}[
      semithick,
      ->,
      >={Stealth[]},
    ]
   
      \draw (0, -3.5) -- (0, 6.5);
      \draw (0, 0) -- (11.5, 0);
    \end{scope}
  \draw[thick] (0,0) -- (1,1) -- (2,0) -- (3,1) -- (4,0) -- (5,1) -- (6,2) -- (7,3) -- (8,2) -- (9,3) -- (10,4) ;
  
\filldraw[red] (0,0) circle (2.5pt);
\filldraw[blue] (1,1) circle (2.5pt);
\filldraw[red] (2,0) circle (2.5pt);
\filldraw[blue] (3,1) circle (2.5pt);
\filldraw[red] (4,0) circle (2.5pt);
\filldraw[blue] (5,1) circle (2.5pt);
\filldraw[blue] (6,2) circle (2.5pt);
\filldraw[blue] (7,3) circle (2.5pt);
\filldraw[blue] (8,2) circle (2.5pt);
\filldraw[blue] (9,3) circle (2.5pt);
\filldraw[blue] (10,4) circle (2.5pt);
    \end{tikzpicture}
    \caption{Left picture: number of contacts (in red)\ for the simple random walk on the integer lattice $\mathbb Z$. Right picture: number of contacts for a reflected simple random walk on $\mathbb N_0$}
    \label{fig:RW_dim1}
\end{center}    
\end{figure}

Our main models (to be presented in Sections~\ref{sec:model-1} and \ref{sec:model-2} below)\ are inspired by the above innocent example. We will consider random walks in the quarter plane with reflection conditions on the boundary axes (see Figure~\ref{fig:RW_dim2}) and the associated local times on the axes. Assuming that the models' drift belongs to the cone ensures that the local time is finite. We will then demonstrate how the distribution of local time can be calculated using an infinite sum of geometric terms. Before presenting our results, we will outline several reasons why this local time problem is important.

\begin{figure}[h!]
\begin{center}
  \begin{tikzpicture}[
    x=5.5mm,
    y=5.5mm,
  ]

    \draw [fill=white,gray!20,draw opacity=1] (0,0) rectangle (11,-3);
    \draw[thin]
      \foreach \x in {0, ..., 10} {
        (\x, -3) -- (\x, 6)
      }
      \foreach \y in {-2, ..., 5} {
        (0, \y) -- (11, \y)
      }
    ;  

    \begin{scope}[
      semithick,
      ->,
      >={Stealth[]},
    ]
      \draw (0, -3.5) -- (0, 6.5);
      \draw (0, 0) -- (11.5, 0);
    \end{scope}
  \draw[thick] (0,3) -- (1,4) -- (2,5) -- (3,4) -- (4,3);
 \draw[thick] -- (4.05,3) -- (5.05,2);
 \draw[thick] (5,2) -- (6,3) -- (7,2);
 \draw[thick] (7.05,2) -- (8.05,1) -- (9.05,0) -- (10.05,1)  ;
  \draw[thick,brown] (0,1) -- (1,2) -- (2,3) -- (3,2) -- (4,3);
  \draw[thick,brown] (3.95,3) -- (4.95,2);
  \draw[thick,brown] (5,2) -- (6,1) -- (7,2);
  \draw[thick,brown] (6.95,2) -- (7.95,1) -- (8.95,0) -- (9.95,1) ;
\filldraw[blue] (0,3) circle (2.5pt);
\filldraw[blue] (0,1) circle (2.5pt);
\filldraw[blue] (1,2) circle (2.5pt);
\filldraw[blue] (1,4) circle (2.5pt);
\filldraw[blue] (2,5) circle (2.5pt);
\filldraw[blue] (3,4) circle (2.5pt);
\filldraw[red] (4,3) circle (2.5pt);
\filldraw[red] (5,2) circle (2.5pt);
\filldraw[blue] (6,3) circle (2.5pt);
\filldraw[red] (7,2) circle (2.5pt);
\filldraw[red] (8,1) circle (2.5pt);
\filldraw[blue] (2,3) circle (2.5pt);
\filldraw[blue] (3,2) circle (2.5pt);
\filldraw[blue] (6,1) circle (2.5pt);
\filldraw[red] (9,0) circle (2.5pt);
\filldraw[red] (10,1) circle (2.5pt);
    \end{tikzpicture}\quad
  \begin{tikzpicture}[
    x=5.5mm,
    y=5.5mm,
  ]

     \draw [fill=white,gray!20,draw opacity=1] (-3,-3) rectangle (6,-1);
      \draw [fill=white,gray!20,draw opacity=1] (-3,-3) rectangle (-1,6);
    \draw[thin]
      \foreach \x in {-3, ..., 6} {
        (\x, -3) -- (\x, 6)
      }
      \foreach \y in {-3, ..., 6} {
        (-3, \y) -- (6, \y)
      }
    ;  

    \begin{scope}[
      semithick,
      ->,
      >={Stealth[]},
    ]
   
      \draw (-1, -3.5) -- (-1, 6.5);
      \draw (-3.5,-1) -- (6.5, -1);
    \end{scope}
  \draw (-1,-1) -- (0,0) -- (1,-1) -- (2,0) -- (1,1)  -- (0,2) -- (-1,3) -- (0,4) -- (-1,5) -- (0,6) -- (1,5) -- (2,4) -- (3,5) -- (4,4) -- (5,5) ;
  
\filldraw[red] (-1,-1) circle (2.5pt);
\filldraw[blue] (0,0) circle (2.5pt);
\filldraw[red] (1,-1) circle (2.5pt);
\filldraw[blue] (2,0) circle (2.5pt);
\filldraw[blue] (1,1) circle (2.5pt);
\filldraw[blue] (0,2) circle (2.5pt);
\filldraw[red] (-1,3) circle (2.5pt);
\filldraw[blue] (0,4) circle (2.5pt);
\filldraw[red] (-1,5) circle (2.5pt);
\filldraw[blue] (0,6) circle (2.5pt);
\filldraw[blue] (1,5) circle (2.5pt);
\filldraw[blue] (2,4) circle (2.5pt);
\filldraw[blue] (3,5) circle (2.5pt);
\filldraw[blue] (4,4) circle (2.5pt);
\filldraw[blue] (5,5) circle (2.5pt);

    \end{tikzpicture}
    \caption{Left: two ordered simple random walks on $\mathbb{N}_0$, exhibiting two types of boundary contacts: coincidences between the upper and lower paths, and contacts with the horizontal axis. Right: A singular random walk in the quarter plane $\mathbb N_0^2$, with jumps $(-1,1)$, $(1,-1)$, and $(1,1)$; boundary contacts are indicated in red.}
    \label{fig:RW_dim2}
\end{center}    
\end{figure}

\paragraph{Probability theory.}

The local time at a given point, as discussed above, plays a fundamental role in the theory of random walks. It is closely related to several key quantities, including the Green function (through the number of visits), return times, and more broadly, to fluctuation theory. For a classical probabilistic treatment of these topics, we refer the reader to Chapter~3 of \cite{Fe-68}. For recent applications involving the local time in neighborhoods of the boundary (particularly in the context of branching processes in random environments) see \cite{ElWa-25}.

\paragraph{Statistical physics.}

As noted in \cite{BeOwRe-19}, part of the motivation for this study originates from \cite{TaOwRe-14,TaOwRe-16}, which analyses models of interacting directed polymers. In \cite{TaOwRe-14}, the authors investigate a model involving two interacting polymers constrained above an impenetrable wall (see the left side of Figure~\ref{fig:RW_dim2}). The model consists of two paths of equal length confined to the upper half-space, with each step moving either up or down. One path is designated as the top path and is constrained never to cross below the other.
The model assigns specific weights to different types of vertex visits: vertices on the horizontal boundary visited by the bottom path receive one type of weight; vertices visited by both paths receive another distinct weight.
There exists a natural correspondence between this model and a class of interacting walks confined to the quarter plane (illustrated on the right side of Figure~\ref{fig:RW_dim2}).

The model examined in \cite{TaOwRe-16} is a variation: instead of two non-crossing paths above a wall, it considers three non-crossing paths without a wall. By tracking the distances between the top and middle paths, and between the middle and bottom paths, the system can be mapped onto a single path in two dimensions. The non-crossing condition ensures that this corresponding path remains confined to the quarter plane.

From a physical perspective, the partition function can be defined as the normalisation of the Boltzmann distribution on walks of length $n$. This function depends on the weights assigned to visits to the bottom (or left) boundary. The limiting free energy of the model is then defined as the exponential growth rate of the partition function as $n \to \infty$. The free energy captures the typical, large-scale behavior of the walks and characterizes the different phases of the model. See \cite{BrOwReWh-05,OwReWo-12,BeOwXu-21,BoKaVe-21} for related literature.

\paragraph{Analytic combinatorics of lattice paths.}
Lattice paths have been the subject of intensive study within the combinatorial community. In particular, in the context of the present paper, Theorem~5 in \cite{BaFl-02} computes the number of boundary contacts for excursions whose length goes to infinity. Here, excursions refer to walks that are constrained to be non-negative and have fixed starting and ending points. The resulting limit law is a sum of two geometric distributions. The paper \cite{BaWa-14} (see in particular Theorem~4.2) studies other models, such as random walks on $\mathbb N_0$ reflected at $0$, and derives various limit laws, such as that of the number of touches with $0$. See \cite{Wa-20,BaKuWa-24} for related theorems on phase transitions in random walks.

\paragraph{Main results.}
In Section~\ref{sec:model-1}, we consider a random walk in the quarter plane with identical horizontal and vertical reflections. In Theorem~\ref{thm: f_n(x) recurrence relation}, we present a recursive method to compute the probability of observing a given number of  contacts, starting from an arbitrary point within the cone. The assumption on the reflections plays a crucial role in the proof, enabling the use of a coupling argument. In Proposition~\ref{prop:lower_upper}, we further derive the precise tail asymptotics of the boundary contact distribution.

In Section~\ref{sec:model-2}, we relax the assumption on the reflections by considering a model with arbitrary horizontal and vertical reflections. On the other hand, we impose a singularity condition on the random walk, requiring that it cannot jump in the directions $(-1,0)$, $(-1,-1)$, or $(0,-1)$. Under this setting, Theorem~\ref{thm: G(i,j,z) generating function} provides a closed-form expression for the boundary contact distribution via a compensation method, an approach that fundamentally relies on the singularity assumption (refer to the next paragraph for more details on the technique). We also derive several asymptotic estimates for the distribution, both as the starting point tends to infinity and as the number of contacts increases; see e.g.\ Proposition~\ref{prop: f_n asymptotic}.

In Section~\ref{sec:examples}, we present several illustrations of our results on particularly relevant models.

\paragraph{Compensation approach.}

As a side note, our work introduces a new instance of the applicability of the compensation approach, specifically in computing the distribution of the number of boundary contacts for random walks in the quarter plane.

The compensation approach has been developed within the probabilistic framework of stationary distributions for random walks; see, for example, \cite{Ad-91,AdWaZi-90,AdWeZi-93,AdBoRe-01,AvVLRa-13}. Unlike traditional methods for quadrant walk problems—which typically aim to determine a generating function—the compensation approach focuses instead on directly computing the coefficients, which in our case correspond to the probabilities of observing a given number of contacts.

The key idea is that the number of contacts, viewed as a function of three variables (two spatial coordinates and one time parameter), satisfies a natural recurrence relation. Using the compensation approach, we will solve this recurrence in the form of infinite sums of elementary geometric terms.

\section{A first model with same horizontal and vertical reflections}
\label{sec:model-1}

\subsection{The model}
We begin by precisely defining our first model of reflected random walk in the positive quadrant, starting with its increments. Let $\xi$ and $\eta$ be $\mathbb{Z}^2$-valued random variables, described by their probability mass functions (pmfs):
\begin{equation}
\label{eq: xi, eta pms}
\mathbf{P} (\xi = (k,\ell) ) = p_{k,\ell},
\quad
\mathbf{P}(\eta = (k,\ell) ) = q_{k,\ell},
\quad
(k,\ell) \in \mathbb{Z}^2.
\end{equation}
The random variables $\xi$ and $\eta$ represent the step increments of the walk in the interior $\mathbb{N}^2$ and on the boundary $\mathbb{N}_0^2 \setminus \mathbb{N}^2$ of the quadrant, respectively. The sets of weights $\{p_{k,\ell}\}_{k,\ell}$ and $\{q_{k,\ell}\}_{k,\ell}$ are non-negative and satisfy the following conditions:

\noindent\begin{minipage}{0.63\textwidth}
\vspace{-5.1cm}
\begin{enumerate}[label=(A\arabic{*}),ref={\rm (A\arabic{*})}]
	\item\label{asm: A1}  $\sum_{k,\ell} p_{k,\ell} = \sum_{k,\ell} q_{k,\ell} = 1$  (\textit{normalization})
	\item\label{asm: A2} $p_{k,\ell}=0$ for all $k,\ell \leq -2$  (\textit{small $<0$ jumps})
	\item\label{asm: A3} $p_{k,-1} p_{-1,\ell}>0$ for some $k,\ell$  (\textit{existence $<0$ jumps})
	\item\label{asm: A4} $\sum_{k,\ell} k p_{k,\ell},\sum_{k,\ell} \ell p_{k,\ell} \in(0,\infty)$  (\textit{drift $>0$})
	\item\label{asm: A5} $q_{k,\ell}=0$ for all $k,\ell \leq -1$  (\textit{non-negative jumps})
	\item\label{asm: A6} $\sum_{k,\ell} k q_{k,\ell},\sum_{k,\ell} \ell q_{k,\ell} \in(0,\infty)$  (\textit{boundary drift $>0$})
\end{enumerate}
\end{minipage}
 \begin{tikzpicture}[
    x=5.3mm,
    y=5.3mm,
  ]
\vspace{2.5cm}
     \draw [fill=white,gray!20,draw opacity=1] (-3,-3) rectangle (6,-1);
      \draw [fill=white,gray!20,draw opacity=1] (-3,-3) rectangle (-1,6);
    \draw[thin]
      \foreach \x in {-3, ..., 6} {
        (\x, -3) -- (\x, 6)
      }
      \foreach \y in {-3, ..., 6} {
        (-3, \y) -- (6, \y)
      }
    ;  

    \begin{scope}[
      semithick,
      ->,
      >={Stealth[]},
    ]
   
   \draw[->,thick,blue] (3,3) -- (4,4);
   \draw[->,thick,blue] (3,3) -- (2,3);
   \draw[->,thick,blue] (3,3) -- (3,2);
   \draw[->,thick,blue] (3,3) -- (2,5);
   \draw[->,thick,blue] (3,3) -- (5,4);
   
   \draw[->,thick,red] (1,-1) -- (1,0);
    \draw[->,thick,red] (1,-1) -- (2,1);
    \draw[->,thick,red] (1,-1) -- (4,0);
    
     \draw[->,thick,red] (-1,1) -- (-1,2);
    \draw[->,thick,red] (-1,1) -- (0,3);
    \draw[->,thick,red] (-1,1) -- (2,2);
    
      \draw (-1, -3.5) -- (-1, 6.5);
      \draw (-3.5,-1) -- (6.5, -1);
      \filldraw[black] (0,0) circle (0pt) node[]{\textcolor{red}{$q_{k,\ell}$}};
      \filldraw[black] (4.5,4.5) circle (0pt) node[]{\textcolor{blue}{$p_{k,\ell}$}};
    \end{scope}
    \end{tikzpicture}

Now let $\{\xi_n\}_{n\geq 0}$ and $\{\eta_n\}_{n\geq 0}$ be sequences of independent copies of the random variables $\xi$ and $\eta$, respectively. For any initial point $x \in \mathbb{N}_0^2$, the random walk $\{S_n^x\}_{n\geq 0}$ is defined as follows:
\begin{equation*}
S_0^x = x, \quad S_{n+1}^x = S_{n}^x + \xi_{n+1}\cdot \mathbf{1}_{\{S_n^x\in\mathbb{N}^2\}} + \eta_{n+1}\cdot \mathbf{1}_{\{S_n^x\in\mathbb{N}_0^2\setminus \mathbb{N}^2\}},\quad n\geq 0.
\end{equation*}
Several important properties of the model follow from Assumptions~\ref{asm: A1}--\ref{asm: A6}. First, these assumptions guarantee that the reflected random walk is irreducible on its state space $\mathbb{N}_0^2$. Inside $\mathbb{N}^2$, the walk has a positive drift, so for a starting point sufficiently far from the axes, the probability of remaining forever within the quadrant $\mathbb{N}^2$ lies in the interval $(0,1)$. Another key feature is that on the boundary $\mathbb{N}_0^2 \setminus \mathbb{N}^2$, the transition probabilities are identical along the horizontal and vertical axes. Moreover, the positive drift along the boundary ensures that the walk almost surely reflects back into the interior after some time.

\subsection{Numbers of contacts}

Our aim is to study the time (interpreted as the number of visits, or more precisely, the local time)\ that the walk spends on the boundary before drifting to infinity. This quantity is represented by a random variable $Z_x$, which admits the following representation:
\begin{equation}
\label{eq: definition Z_x}
	Z_x = \sum_{n\geq 0} \mathbf{1}_{\{S_n^x\in\mathbb{N}_0^2\setminus \mathbb{N}^2   \}},\quad x\in\mathbb{N}_0^2.
\end{equation}
For brevity, we will also refer to $Z_x$ as the number of boundary contacts. An alternative way to define $Z_x$ is via a sequence of hitting times. Specifically, for any $n \geq 1$, let $\tau^x_n$ denote the time at which the random walk $\{S_n^x\}_{n \geq 0}$ hits the boundary for the $n$\textsuperscript{th} time, that is,
	\begin{equation}
	\label{eq: definition tau_n^x}
    \left\{\begin{array}{rcll}
	\tau^x_1 &=& \displaystyle \inf \{ k\geq 0: S_k^x\in\mathbb{N}_0^2\setminus \mathbb{N}^2   \},&\smallskip\\ 
    \tau^x_{n+1} &=& \displaystyle\tau_n^x + \inf \{ k\geq 1: S^x_{\tau^x_n+k} \in \mathbb{N}_0^2\setminus \mathbb{N}^2  \}, &\quad n\geq 1.
    \end{array}\right.
\end{equation}
The random variable $Z_x$ can then be characterized in terms of the sequence $\{\tau_n^x\}_{n \geq 1}$ as follows:
\begin{equation}
\label{eq: definition alternative Z_x}
	Z_x = \sum_{n\geq 1} \mathbf{1}_{\{ \tau_n^x<\infty \}}.
\end{equation}
According to definitions \eqref{eq: definition tau_n^x} and \eqref{eq: definition alternative Z_x}, if the walk starts at a point $x$ on the boundary, this initial position is counted as a first contact.

The formulation \eqref{eq: definition alternative Z_x} offers a more convenient framework than the initial definition~\eqref{eq: definition Z_x} for analyzing the probability of events involving $Z_x$. Before moving forward, we first establish that $Z_x$ is finite through the following lemma.

\begin{lemma}
\label{lem: Z_x finite}
	Under Assumptions~\ref{asm: A1}--\ref{asm: A6}, for any $x \in \mathbb{N}_0^2$, we have $Z_x < \infty$ a.s.
\end{lemma}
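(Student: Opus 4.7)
The plan is to deduce finiteness of $Z_x$ from the stronger assertion that $S_n^x$ drifts to $(+\infty,+\infty)$ componentwise almost surely, since the boundary $\mathbb{N}_0^2 \setminus \mathbb{N}^2 = \{(i,j) : i = 0 \text{ or } j = 0\}$ is the complement of $\mathbb{N}^2$, and coordinatewise divergence forces $S_n^x \in \mathbb{N}^2$ for all sufficiently large $n$; hence the sum in~\eqref{eq: definition Z_x} has only finitely many nonzero terms.

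To prove coordinatewise divergence, I would decompose the first coordinate according to the type of step taken. Let $J_1 < J_2 < \cdots$ be the successive interior times, i.e.\ those $k$ with $S_{k-1}^x \in \mathbb{N}^2$, and $K_1 < K_2 < \cdots$ the boundary times. Then
\[
S_n^{x,(1)} - x^{(1)} = \sum_{j: J_j \leq n} \xi_{J_j}^{(1)} + \sum_{j: K_j \leq n} \eta_{K_j}^{(1)},
\]
and a standard stopping-time argument, using the mutual independence of the i.i.d.\ sequences $\{\xi_k\}$ and $\{\eta_k\}$, shows that the extracted subsequences $(\xi_{J_j})_j$ and $(\eta_{K_j})_j$ are themselves i.i.d.\ copies of $\xi$ and $\eta$. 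Denote their index counts by $A_n$ and $B_n$, so that $A_n + B_n = n$, and at least one of $A_n, B_n$ tends to infinity. The SLLN applied to the sum whose index count diverges — together with $\mathbf{E}[\xi^{(1)}] > 0$ from~\ref{asm: A4}, $\mathbf{E}[\eta^{(1)}] > 0$ from~\ref{asm: A6}, and the lower bounds $\xi^{(1)} \geq -1$ from~\ref{asm: A2} and $\eta^{(1)} \geq 0$ from~\ref{asm: A5} to control the other sum — forces $S_n^{x,(1)} \to +\infty$. The symmetric argument on the second coordinate yields $S_n^{x,(2)} \to +\infty$.

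The only delicate point will be the routine but essential verification that $(\xi_{J_j})_j$ and $(\eta_{K_j})_j$ remain i.i.d.\ copies of $\xi$ and $\eta$, which legitimizes the application of SLLN along random subsequences; the rest of the argument is a direct combination of the drift assumptions~\ref{asm: A4}, \ref{asm: A6} and the sign constraints~\ref{asm: A2}, \ref{asm: A5} on the jumps.
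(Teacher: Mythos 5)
Your proof is correct and follows a genuinely different route from the paper. The paper argues through the hitting times $\tau_n^x$ from~\eqref{eq: definition tau_n^x}: it establishes a uniform positive lower bound on the probability of escaping after at most four further boundary contacts, invoking irreducibility of the chain to handle starting points near the origin, and concludes that $\mathbf{P}(Z_x \geq n)$ decays geometrically in $n$. You instead prove the stronger fact that $S_n^x \to (+\infty,+\infty)$ componentwise a.s.\ by splitting the increments into interior and boundary substeps, observing that the two extracted subsequences are i.i.d.\ copies of $\xi$ and $\eta$ (the ``fresh increment'' verification you flag, which rests on $\{J_j = m\}$ and $\{K_j = m\}$ being measurable with respect to the $\sigma$-field generated by the steps up to time $m-1$), and invoking the strong law together with the drift assumptions~\ref{asm: A4} and~\ref{asm: A6}. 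This SLLN-based transience argument is conceptually cleaner --- positive drift in both regimes forces escape --- and has the side benefit of not using irreducibility (which the paper derives from Assumption~\ref{asm: A3}), whereas the paper's renewal-type argument is fully self-contained and already contains the seed of the quantitative tail estimate sharpened later in Proposition~\ref{prop:lower_upper}. One subtlety worth making explicit: when $A_n$ and $B_n$ both diverge, you must apply the SLLN to the $\xi$-sum and lower-bound the $\eta$-sum by zero via~\ref{asm: A5}; the crude bound $-A_n$ on the $\xi$-sum from~\ref{asm: A2} is adequate only on the complementary event $\{\lim_n A_n < \infty\}$, since otherwise $-A_n$ can overwhelm $B_n\,\mathbf{E}[\eta^{(1)}]$. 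Phrasing the dichotomy as $\{\lim_n A_n=\infty\}$ versus $\{\lim_n A_n<\infty\}$ (the latter forcing $\lim_n B_n=\infty$) makes the argument airtight.
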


\begin{proof}
To prove the lemma, it suffices to show that $\lim_{n\to\infty} \mathbf{P} ( Z_x \geq n )=0$ for any point $x \in \mathbb{N}_0^2$. Using the stopping times introduced in \eqref{eq: definition tau_n^x}, we reformulate
\begin{equation*}
    \mathbf{P} ( Z_x \geq n )  = \mathbf{P} (\tau_1^x < \infty, \ldots,\tau_n^x<\infty).
\end{equation*}
To prove that the above probability goes to $0$ as $n\to\infty$, we first consider a starting point $x \in \mathbb{N}_0^2$ located sufficiently far from the origin $(0,0)$ (for instance, with $\vert x \vert_1 \geq 3$), so that there exist paths from $x$ that avoid the boundary axes, since by Assumption~\ref{asm: A4}, the random walk has a drift toward the interior of the quadrant. As a result, if $x \in \mathbb{N}^2$, then $\mathbf{P}(\tau_1^x = \infty) > 0$, and if $x \in \mathbb{N}_0^2 \setminus \mathbb{N}^2$, then $\mathbf{P}(\tau_2^x = \infty) > 0$.

By a monotonicity argument, one can further assert that
\begin{equation*}
\min_{x\in\mathbb{N}^2, \vert x \vert_1 \geq 3} \mathbf{P} (\tau_1^x = \infty )>0\quad \text{and}\quad \min_{x\in\mathbb{N}_0^2\setminus\mathbb{N}^2 ,\vert x \vert_1 \geq 3} \mathbf{P} (\tau_2^x = \infty )>0,
\end{equation*}
and these minima should be reached within a finite neighborhood of the origin.
	
On the other hand, any state $x\in \mathbb{N}_0^2$ located near the origin (e.g., $ \vert x\vert_1  < 3$) is non-absorbing due to irreducibility, so that there is a non-zero probability that the walk exits this neighborhood within at most four contacts with the boundary. Once outside this region, the walk has a positive probability of escaping to infinity while avoiding further boundary interactions. This leads to the conclusion that $\min_{x\in\mathbb{N}_0^2}  \mathbf{P} (\tau_{4}^x = \infty ) > 0$. As a consequence, we have
	 \begin{equation*}
	\max_{x\in\mathbb{N}_0^2}\mathbf{P} ( \tau_1^x<\infty,\ldots,\tau_{4}^x<\infty ) < 1- \min_{x\in\mathbb{N}_0^2}\mathbf{P} ( \tau_{4}^x = \infty)  < 1.
\end{equation*}
We can now conclude:
	\begin{align*}
	\mathbf{P} (\tau_1^x < \infty, \ldots,\tau_n^x<\infty) &  \leq   \prod_{k=0}^{\lfloor n/4 \rfloor-1}\mathbf{P}(\tau_{4k+1}^x<\infty,\ldots,\tau_{4k+4}< \infty | \tau_{4k}^x<\infty )  \\
	&  \leq \left( \max_{x\in\mathbb{N}_0^2} \mathbf{P} ( \tau_1^x<\infty,\ldots,\tau_{4}^x<\infty )\right)^{\lfloor n/4\rfloor} \to 0,
\end{align*}
as $n\to \infty$, where we conventionally set $\tau_0^x=0$, a.s. 
\end{proof}

\subsection{A recursion satisfied by the numbers of contacts}

We now focus on studying the pmfs of the number of contacts, denoted by
\begin{equation}
\label{eq:def_f_n}
	f_n(x) = \mathbf{P}(Z_{x}=n),\quad n\in\mathbb{N}_0,\quad x\in\mathbb{N}_0^2.
\end{equation} 
As shown in the following theorem, $f_n$ satisfies a first-order recursive relation in $n$, enabling the computation of any term $f_{n+1}$ from the previous term $f_{n}$.


\begin{theorem}
\label{thm: f_n(x) recurrence relation}
	Under Assumptions~\ref{asm: A1}--\ref{asm: A6} and for all $x\in\mathbb{N}_0^2$, the sequence of pmfs $\{f_n\}_{n\geq 0}$ satisfies the recurrence relation
	\begin{equation}
	\label{eq: f_n(x) recurrence relation}
	\left\{\begin{array}{rcll}
    f_1 (x) &=& \displaystyle \mathbf{E} f_0 (x+\eta) - f_0 (x), &\smallskip\\
	f_{n+1} (x) &=& \displaystyle \mathbf{E} f_n (x+\eta),&\quad n\geq 1,
    \end{array}\right.
\end{equation}
where $\eta$ is defined in \eqref{eq: xi, eta pms}. As a consequence, the cumulative distribution function of $Z_x$ admits the representation below, with $\{\eta_k\}_{k\geq 1}$ being independent copies of $\eta$,
\begin{equation}
\label{eq: f_n(x) exp}
	\mathbf{P}(Z_x\leq n)=\sum_{k=0}^n f_k(x) = \mathbf{E} f_0 \left( x + \sum_{k=1}^n \eta_k \right),\quad n\geq 1,\quad x\in\mathbb{N}_0^2.
\end{equation}
\end{theorem}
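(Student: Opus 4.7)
The plan is to establish~\eqref{eq: f_n(x) recurrence relation} through a coupling argument that hinges on the identical reflection hypothesis; the closed form~\eqref{eq: f_n(x) exp} then follows by iterating the recursion.

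Fix $x\in\mathbb{N}_0^2$ and enlarge the probability space to carry an auxiliary variable $H\sim\eta$ independent of the walk. Construct a second walk $\widetilde S$ with $\widetilde S_0=x+H$, using the same $\xi$-increments as $S^x$ up to time $\tau_1^x$ (the first boundary hit of $S^x$) and continuing, after $\tau_1^x$, as an independent reflected walk from its current position. Since $\eta$ has non-negative components by Assumption~\ref{asm: A5}, one checks inductively that $\widetilde S_k=S^x_k+H$ for $0\leq k\leq\tau_1^x$; in particular $\widetilde S_{\tau_1^x}=S^x_{\tau_1^x}+H$. Now identify $H$ with the first reflection step $\eta_1:=S^x_{\tau_1^x+1}-S^x_{\tau_1^x}$ of $S^x$: under the identical reflection hypothesis, $\eta_1$ is $\eta$-distributed and independent of $\mathcal{F}_{\tau_1^x}$ \emph{regardless of which axis $S^x_{\tau_1^x}$ lies on}, so this identification is admissible. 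After it, $\widetilde S_{\tau_1^x}=S^x_{\tau_1^x+1}$; since from that point on both walks are reflected walks emanating from the common state $S^x_{\tau_1^x+1}$, the strong Markov property allows us to couple them pathwise so that $\widetilde S_{\tau_1^x+k}=S^x_{\tau_1^x+1+k}$ for every $k\geq 0$.

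Counting boundary contacts under this coupling yields, almost surely,
\begin{equation*}
Z_{x+H}\;=\;(Z_x-1)\,\mathbf{1}_{\{\tau_1^x<\infty\}},
\end{equation*}
because $\widetilde S$ has no contacts on $[0,\tau_1^x)$ and its later contacts coincide with those of $S^x$ after $\tau_1^x+1$, missing only the contact that $S^x$ records at time $\tau_1^x$ itself. Conditioning on $H$ gives $\mathbf{P}(Z_{x+H}=n)=\mathbf{E}\,f_n(x+\eta)$, so this pathwise identity translates into $\mathbf{E}\,f_0(x+\eta)=f_0(x)+f_1(x)$ (regrouping $\{Z_x=0\}$ and $\{Z_x=1\}$) and, for $n\geq 1$, $\mathbf{E}\,f_n(x+\eta)=\mathbf{P}(Z_x=n+1)=f_{n+1}(x)$, which is exactly~\eqref{eq: f_n(x) recurrence relation}. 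Summing the recurrence produces $\mathbf{P}(Z_x\leq n)=\mathbf{E}\,\mathbf{P}(Z_{x+\eta}\leq n-1)$; iterating this $n$ times delivers~\eqref{eq: f_n(x) exp} with $n$ independent copies of $\eta$.

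The main delicate point---and the very reason for the identical reflection hypothesis---is the identification $H=\eta_1$. If the reflection laws on the two axes were distinct, the conditional distribution of $\eta_1$ given $\mathcal{F}_{\tau_1^x}$ would depend on the random axis $S^x_{\tau_1^x}$ lies on, and no single $\eta$-distributed variable independent of $\mathcal{F}_{\tau_1^x}$ could play its role. This obstruction is precisely what forces the completely different, compensation-based technique of Section~\ref{sec:model-2}.
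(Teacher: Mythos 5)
Your proof is correct and relies on essentially the same coupling idea as the paper's proof; the identical-reflection hypothesis enters in exactly the same way (the first reflection step $\eta_{\tau_1^x+1}$ has law $\eta$ independently of which boundary piece $S^x_{\tau_1^x}$ lands on, hence independently of $\mathcal{F}_{\tau_1^x}$). The paper conditions on the value $\eta_{\tau_1^x+1}=y$, derives distributional identities for the hitting times $\tau_n^{x}$ versus $\tau_n^{x+y}$, and then sums over $y$ weighted by $q_y$; you instead package the whole conditioning into the single pathwise identity $Z_{x+H}=(Z_x-1)\mathbf{1}_{\{\tau_1^x<\infty\}}$ under the coupling, and marginalize at the end. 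This is a slightly more compact presentation of the same mechanism. One thing worth tightening: as written you first declare $H$ independent of the walk and only afterward ``identify'' $H=\eta_{\tau_1^x+1}$, which retroactively changes the joint law of $(\widetilde S,S^x)$; the clean way to say what you mean is to \emph{define} the coupled process with $\widetilde S_0=x+\eta_{\tau_1^x+1}$ on $\{\tau_1^x<\infty\}$ (and with an independent copy on $\{\tau_1^x=\infty\}$), and then observe, using the strong Markov property and identical reflections, that the resulting $\widetilde S$ still has the marginal law of a reflected walk started from $x+\eta$ with $\eta$ independent. With that reading your argument is sound, including the use of non-negativity of $\eta$ (Assumption~\ref{asm: A5}) to keep $\widetilde S$ in the interior before $\tau_1^x$, and the final iteration to obtain~\eqref{eq: f_n(x) exp}.
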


Before proceeding with the proof, let us first discuss some aspects of Theorem~\ref{thm: f_n(x) recurrence relation}. This is a fairly general result, yet it can be derived using elementary arguments based on a coupling technique. It shows that, once the initial values $f_0$ are known, all subsequent values $f_n$ for $n \geq 1$ follow naturally. From a technical standpoint, the result does not rely on any assumptions regarding higher-order moments (other than \ref{asm: A3} and \ref{asm: A6}). This will be contrasted in Section~\ref{sec:model-2}, where a second model will be considered and similar results derived, this time under higher moment assumptions.

However, the result given in Theorem~\ref{thm: f_n(x) recurrence relation} no longer holds if the transition probabilities along the horizontal and vertical axes differ. Moreover, additional techniques are required to derive an explicit expression for $f_0$. The problem of survival probability has been extensively studied, and in several cases, explicit formulas for $f_0$ have been obtained; see \cite[Cor.~8]{KuRa-11}, \cite[Thm~C]{GaRa-14}, and \cite[Cor.~20]{HoRaTa-23}. In such cases, Theorem~\ref{thm: f_n(x) recurrence relation} can then be used to derive explicit expressions for $f_n$ for any $n \geq 1$.

\begin{proof}[Proof of Theorem~\ref{thm: f_n(x) recurrence relation}]
Our arguments to prove Theorem~\ref{thm: f_n(x) recurrence relation} are based on a coupling technique. For any starting points $x,y\in\mathbb{N}_0^2$, let us consider the coupled process 
\begin{equation*}
\bigl( S_n^{x} , S_n^{x+y} \bigr)_{n\geq 0}.
\end{equation*}
It is readily seen that for any $n\leq \tau_1^{x}$, we have $S_n^{x} + y = S_n^{x+y}$ a.s. We further have $S_{\tau_1^{x} + 1}^{x} = S_{\tau_1^{x} }^{x+y}$ a.s.\ under the conditional measure $\mathbf{P}(\cdot | \tau_1^{x}<\infty, \eta_{\tau_1^{x} + 1} = y )$. As a consequence, 
\begin{equation*}
	\mathbf{P} ( S_{\tau_1^{x}+1+n}^{x} \in \cdot | \tau_1^{x}<\infty, \eta_{\tau_1^{x} + 1} = y )  = \mathbf{P} (  S_{\tau_1^{x}+n}^{x+y} \in \cdot | \tau_1^{x}<\infty  )
\end{equation*}
for any $n\geq 1$, and thus for any $n\geq 1$,
\begin{equation*}
	\mathbf{P} ( \tau_{n+1}^{x} \in \cdot | \tau_1^{x}<\infty, \eta_{\tau_1^{x} + 1} = y )  = \mathbf{P} ( \tau_n^{x+y} + 1 \in \cdot | \tau_1^{x}<\infty  ).
\end{equation*}

The above assertion enables us to estimate the number of contacts of a random walk starting at $x$ by comparing it with that of a coupled random walk started at a different point. This comparison can be used to express $f_{n+1}$ in terms of $f_n$. We start by proving the first identify in \eqref{eq: f_n(x) recurrence relation}. For any $x\in\mathbb{N}_0^2$, we have:
\begin{align*}
	f_1 (x) 
	 &= \mathbf{P} ( \tau_1^{x}<\infty, \tau_2^{x}=\infty ) \\
	 &= \sum_{y\in\mathbb{N}_0^2} q_{y} \mathbf{P} (  \tau_2^{x}=\infty | \tau_1^{x}<\infty, \eta_{\tau_1^{x} + 1} = y ) \mathbf{P} ( \tau_1^{x}<\infty ) \\
	 &= \sum_{y\in\mathbb{N}_0^2} q_{y}  \mathbf{P} (  \tau_1^{x+y}+1 =\infty | \tau_1^{x}<\infty )  \mathbf{P} ( \tau_1^{x}<\infty ) \\
	 &= \sum_{y\in\mathbb{N}_0^2} q_{y} \mathbf{P} ( \tau_1^{x}<\infty,   \tau_1^{x+y}=\infty ) \\
	 &= \sum_{y\in\mathbb{N}_0^2} q_{y}  \left(  \mathbf{P} (   \tau_1^{x+y}=\infty )  -  \mathbf{P} ( \tau_1^{x}=\infty,   \tau_1^{x+y}=\infty )\right) \\
	 &= \sum_{y\in\mathbb{N}_0^2} q_{y}  \left(  \mathbf{P} (   \tau_1^{x+y}=\infty )  -  \mathbf{P} ( \tau_1^{x}=\infty )\right) \\
	 &= \sum_{y\in\mathbb{N}_0^2} q_{y}  f_0(x+y ) - f_0(x).
\end{align*}
Similarly, we show the second identity in \eqref{eq: f_n(x) recurrence relation}. For any $n\geq 1$ and $x\in\mathbb{N}_0^2$, we have:
\begin{align*}
	f_{n+1} (x)  &= \mathbf{P} ( \tau_1^{x} < \infty ,\ldots, \tau_{n+1}^{x} < \infty , \tau_{n+2}^{x} = \infty ) \\
	&= \sum_{y\in\mathbb{N}_0^2} q_{y} \mathbf{P} (  \tau_2^{x} < \infty ,\ldots, \tau_{n+1}^{x} < \infty , \tau_{n+2}^{x} = \infty |  \tau_1^{x} < \infty , \eta_{\tau_1^{x}+1} = y ) \mathbf{P} (  \tau_1^{x} < \infty ) \\
	&= \sum_{y\in\mathbb{N}_0^2} q_{y} \mathbf{P} ( \tau_1^{x+y}+1 < \infty ,\ldots , \tau_{n}^{x+y}+1 < \infty, \tau_{n+1}^{x+y}+1 = \infty |  \tau_1^{y} < \infty  ) \mathbf{P} (  \tau_1^{y} < \infty ) \\
	&= \sum_{y\in\mathbb{N}_0^2} q_{y} \mathbf{P} (  \tau_1^{x} < \infty, \tau_1^{x+y} < \infty , \ldots, \tau_{n}^{x+y} < \infty, \tau_{n+1}^{x+y} = \infty )  \\
	&= \sum_{y\in\mathbb{N}_0^2} q_{y} \mathbf{P} (  \tau_1^{x+y} < \infty , \ldots, \tau_{n}^{x+y} < \infty, \tau_{n+1}^{x+y} = \infty ) \\
	&= \sum_{y\in\mathbb{N}_0^2} q_{y} f_n (x+y).
\end{align*}
To conclude, we prove \eqref{eq: f_n(x) exp}. Noting that $\{\eta_n\}_{n\geq 1}$ are independent copies of 
$\eta$, a simple induction yields
\begin{align*}
    &f_1 (x) = \mathbf{E} f_0 (x+\eta_1) - f_0 (x),\\
    &f_n (x) = \mathbf{E} f_{n-1} (x+\eta_n) = \mathbf{E} f_0 \left(x+\sum_{k=1}^n \eta_k \right) - \mathbf{E} f_0 \left(x+ \sum_{k=2}^n \eta_k\right),\quad n\geq 2,\quad x\in\mathbb{N}_0^2.
\end{align*}
The proof is complete.
\end{proof}

\subsection{Upper and lower bounds for the tail probability}
Rather than pursuing results on explicit expressions at this stage, we proceed to an independent consequence of Theorem~\ref{thm: f_n(x) recurrence relation}: deriving upper and lower bounds for the tail probability $\mathbf{P}(Z_x>n)$. 

Let us introduce some notation before stating our result. Given two real sequences $\{a_n\}_{n\geq 0}$ and $\{b_n\}_{n\geq 0}$, we write $a_n \asymp b_n$ as $n \to \infty$ if there exist constants $c_1, c_2 > 0$ such that $c_1 \leq a_n / b_n \leq c_2$ for all sufficiently large $n$. We also denote the maximum and minimum of any two real numbers $a$ and $b$ by
\begin{equation*}
a \vee b = \max\{a,b\}\quad\text{and}\quad
a \wedge b = \min\{a,b\}.
\end{equation*}
The following proposition is a consequence of Theorem~\ref{thm: f_n(x) recurrence relation}, describing the asymptotic behavior of the tail probability $\mathbf{P}(Z_x > n)$. Denote by $\alpha_1$ and $\beta_{-1}$ the unique solutions in $(0,1)$ of the equations
    \begin{equation}\label{eq: alpha_1, beta_-1 equations}
		\alpha = \sum_{k,\ell} p_{k,\ell} \alpha^{k+1}\quad \text{and}\quad 
		\beta = \sum_{k,\ell} p_{k,\ell} \beta^{\ell+1}.
\end{equation}
Let us further denote by $\Pi_1$ and $\Pi_2$ the projections of any point in $\mathbb{R}^2$ onto the first and second coordinates, respectively.
\begin{proposition}
\label{prop:lower_upper}
	Under Assumptions \ref{asm: A1}--\ref{asm: A6}, for any $x=(i,j)\in\mathbb{N}_0^2$, we have as $n\to\infty$
	\begin{equation*}
		\mathbf{P} (Z_{x} >n ) \asymp \alpha_1^i \left(\mathbf{E}\alpha_1^{ \Pi_1 (\eta) } \right)^n  +\beta_{-1}^j \left( \mathbf{E} \beta_{-1}^{ \Pi_2 (\eta) } \right)^n,
\end{equation*}
where $\alpha_1,\beta_{-1}$ are introduced in \eqref{eq: alpha_1, beta_-1 equations}, and $\eta$ is defined in \eqref{eq: xi, eta pms}.
\end{proposition}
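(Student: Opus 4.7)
The plan is to combine the cdf representation \eqref{eq: f_n(x) exp} with a uniform two-sided estimate on the survival probability $1-f_0$. Rewriting \eqref{eq: f_n(x) exp} as
\begin{equation*}
\mathbf{P}(Z_x > n) = \mathbf{E}\bigl[1 - f_0\bigl(x + \textstyle\sum_{k=1}^n \eta_k\bigr)\bigr],
\end{equation*}
the problem reduces to estimating the map $y \mapsto 1 - f_0(y)$ pointwise and then averaging against the law of $\sum_{k=1}^n \eta_k$.

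The central step is the following uniform two-sided estimate, valid for every $y \in \mathbb{N}_0^2$:
\begin{equation*}
1 - f_0(y) \asymp \alpha_1^{\Pi_1(y)} + \beta_{-1}^{\Pi_2(y)}.
\end{equation*}
To prove it, I would observe that $1 - f_0(y)$ equals the probability that the free walk $y + \sum_{k\geq 1} \xi_k$ ever meets one of the axes. Denoting by $T_i$ the first hitting time of $0$ by the $i$-th coordinate, we have $\tau_1 = T_1 \wedge T_2$ on this event, hence
\begin{equation*}
\mathbf{P}_y(T_1 < \infty) \vee \mathbf{P}_y(T_2 < \infty) \leq \mathbf{P}_y(\tau_1 < \infty) \leq \mathbf{P}_y(T_1 < \infty) + \mathbf{P}_y(T_2 < \infty).
\end{equation*}
By \ref{asm: A2}--\ref{asm: A4}, the marginal $\Pi_1(\xi)$ is bounded below by $-1$ and has positive mean, and the equation \eqref{eq: alpha_1, beta_-1 equations}, equivalent to $\mathbf{E}\alpha^{\Pi_1(\xi)} = 1$, admits a unique root $\alpha_1 \in (0,1)$. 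Applying optional stopping to the bounded martingale $(\alpha_1^{X_n})_{n\geq 0}$, with $X_n = \Pi_1(y) + \sum_{k=1}^n \Pi_1(\xi_k)$, and using that $X_n \to +\infty$ a.s.\ on $\{T_1 = \infty\}$ by the strong law of large numbers, yields the classical one-dimensional ruin identity $\mathbf{P}_y(T_1 < \infty) = \alpha_1^{\Pi_1(y)}$ when $\Pi_1(y) \geq 1$; it is trivially $1$ when $\Pi_1(y) = 0$. A symmetric argument gives $\mathbf{P}_y(T_2 < \infty) = \beta_{-1}^{\Pi_2(y)}$. Since $a \vee b \leq a + b \leq 2(a \vee b)$ for $a,b \geq 0$, the claim follows uniformly in $y$.

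Substituting this estimate and exploiting the independence of the $\{\eta_k\}_{k\geq 1}$ yields
\begin{equation*}
\mathbf{P}(Z_x > n) \asymp \mathbf{E}\alpha_1^{i + \sum_{k=1}^n \Pi_1(\eta_k)} + \mathbf{E}\beta_{-1}^{j + \sum_{k=1}^n \Pi_2(\eta_k)} = \alpha_1^i \bigl(\mathbf{E}\alpha_1^{\Pi_1(\eta)}\bigr)^n + \beta_{-1}^j \bigl(\mathbf{E}\beta_{-1}^{\Pi_2(\eta)}\bigr)^n,
\end{equation*}
which is the desired asymptotic; assumptions \ref{asm: A5}--\ref{asm: A6} ($\eta \geq 0$ with strictly positive drift in both coordinates) guarantee that $\mathbf{E}\alpha_1^{\Pi_1(\eta)}, \mathbf{E}\beta_{-1}^{\Pi_2(\eta)} \in (0,1)$, so both geometric factors genuinely decay. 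The main technical subtlety lies in the uniformity of the estimate on $1-f_0$ up to and including the axes; aiming at $\asymp$ rather than $\sim$ absorbs all multiplicative constants, which is precisely why the elementary ruin/martingale bound suffices. Sharpening to a $\sim$ statement would instead require renewal-theoretic control of the overshoot at the first axis hit, a considerably more delicate task.
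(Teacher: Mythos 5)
Your proof is correct and follows essentially the same route as the paper: both combine the identity $\mathbf{P}(Z_x > n) = \mathbf{E}\bigl[1 - f_0\bigl(x + \sum_{k=1}^n \eta_k\bigr)\bigr]$ from Theorem~\ref{thm: f_n(x) recurrence relation}, the decomposition $\tau_1^x = \widehat{\tau}^x \wedge \widetilde{\tau}^x$ giving the two-sided bound $\alpha_1^i \vee \beta_{-1}^j \leq 1 - f_0(i,j) \leq \alpha_1^i + \beta_{-1}^j$, and independence of the $\eta_k$ to factor the geometric moments. The only substantive variation is in establishing the one-dimensional hitting formula $\mathbf{P}_y(T_1 < \infty) = \alpha_1^{\Pi_1(y)}$ (Lemma~\ref{lem: formula hitting axes} in the paper): you use optional stopping on the bounded nonnegative martingale $(\alpha_1^{X_{n\wedge T_1}})$, observing that skip-freeness from below forces $X_{T_1}=0$ and that the positive drift gives $X_n \to \infty$ on $\{T_1=\infty\}$, whereas the paper conditions on the first step and then argues uniqueness of $\alpha_1 \in (0,1)$ via strict concavity of $\widehat{K}$. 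Both derivations are standard; the martingale argument is a touch more self-contained, while the paper's concavity argument simultaneously establishes the uniqueness of the root claimed in \eqref{eq: alpha_1, beta_-1 equations}.
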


These bounds follow from corresponding upper and lower estimates of $f_0(x)$, the probability of not touching the boundary (see \eqref{eq:def_f_n}), combined with a one-dimensional argument based on the boundary contacts of the projections of the random walk onto the axes. More specifically, we analyze the first hitting times of the random walk to each axis, in the absence of reflected jumps on the axes. We begin by defining a new random walk, without reflection jumps. Let
\begin{equation}\label{eq: walk S'_n}
    S'_0 = (0,0),\quad S'_n = \sum_{k=1}^n \xi_k,\quad n\geq 1,
\end{equation}
where $\{\xi_n\}_{n\geq 1}$, as above, are independent copies of $\xi$, whose pmf is given by \eqref{eq: xi, eta pms}. Then the first hitting times of the random walk $\{x+S'_n\}_{n\geq 0}$ to the vertical and horizontal axes, respectively, are defined as follows, for any $x\in\mathbb{N}_0^2$:
\begin{equation*}
    \widehat{\tau}^x = \inf\bigl\{n\geq 0: \Pi_1 \bigl( x+ S_n' \bigr) =0\bigr\}\quad\text{and}\quad 
    \widetilde{\tau}^x = \inf\bigl\{n\geq 0: \Pi_2 \bigl( x+ S_n' \bigr) =0\bigr\}.
\end{equation*}

\begin{lemma}
\label{lem: formula hitting axes}
    Given any $x = (i,j) \in \mathbb{N}_0^2$, and with $\alpha_1$ and $\beta_{-1}$ as defined in \eqref{eq: alpha_1, beta_-1 equations}, we have
    \begin{equation*}
        \mathbf{P} (\widehat{\tau}^x<\infty) = \alpha_1^i\quad \text{and}\quad
        \mathbf{P} (\widetilde{\tau}^x<\infty) = \beta_{-1}^j.
    \end{equation*}
\end{lemma}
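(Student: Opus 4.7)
I will prove only the first identity $\mathbf{P}(\widehat{\tau}^x<\infty)=\alpha_1^i$; the second is symmetric. The plan is to reduce the problem to a one-dimensional random walk, use a first-step analysis to obtain a fixed-point equation, and identify the relevant root.

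\textbf{Reduction to dimension one.} The event $\{\widehat{\tau}^x<\infty\}$ depends only on the first-coordinate process $X_n^i := i + \sum_{k=1}^n \Pi_1(\xi_k)$. By Assumption~\ref{asm: A2}, the increments $\Pi_1(\xi_k)$ take values in $\{-1,0,1,2,\ldots\}$ (their marginal pmf is $p_k := \sum_\ell p_{k,\ell}$ for $k\geq -1$), and by Assumption~\ref{asm: A4} they have positive and finite mean. Since $X_n^i$ can decrease by at most one unit per step, hitting $0$ from level $i$ requires first hitting $i-1$, then $i-2$, and so on; by the strong Markov property and translation invariance,
\begin{equation*}
\mathbf{P}(\widehat{\tau}^x<\infty) = p^i, \qquad\text{where}\qquad p := \mathbf{P}\bigl(X_n^1=0\text{ for some }n\geq 0\bigr).
\end{equation*}

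\textbf{Fixed-point equation.} Conditioning on the first step $\Pi_1(\xi_1)=k$, the walk lands at $1+k$ and must subsequently reach $0$, which happens with probability $p^{k+1}$ (again by the one-step-down reduction). Summing yields
\begin{equation*}
p = \sum_{k\geq -1} p_k\, p^{k+1} = \sum_{k,\ell} p_{k,\ell}\, p^{k+1},
\end{equation*}
so $p$ is a solution in $[0,1]$ of the equation defining $\alpha_1$ in \eqref{eq: alpha_1, beta_-1 equations}.

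\textbf{Identification.} It remains to show that this equation has a unique solution in $(0,1)$ and that $p$ is that solution. Set $\phi(\alpha):=\sum_{k,\ell} p_{k,\ell}\,\alpha^{k+1}$. Then $\phi$ is strictly convex on $[0,1]$, $\phi(1)=1$, and $\phi'(1)=1+\sum_{k,\ell} k p_{k,\ell}>1$ by Assumption~\ref{asm: A4}; moreover $\phi(0)=\sum_\ell p_{-1,\ell}>0$ by Assumption~\ref{asm: A3}. Hence $\phi(\alpha)-\alpha$ is strictly positive at $0$, strictly negative in a left neighborhood of $1$, and vanishes at $1$; by strict convexity it has exactly one other zero, located in $(0,1)$: this is $\alpha_1$. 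Finally, by Assumption~\ref{asm: A4} and the law of large numbers, $X_n^1\to+\infty$ a.s., so $p<1$; hence $p=\alpha_1$.

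\textbf{Main obstacle.} The delicate point is establishing $p<1$ and the uniqueness of $\alpha_1\in(0,1)$ solely from the stated assumptions. Positivity of the drift (\ref{asm: A4}) together with the one-sided bound $\Pi_1(\xi)\geq -1$ (\ref{asm: A2}) gives integrability and transience; the existence of a strictly negative jump (\ref{asm: A3}) ensures $\phi(0)>0$, which rules out the trivial root $\alpha=0$ and guarantees that the second root lies in the open interval.
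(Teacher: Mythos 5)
Your proof is correct and follows essentially the same route as the paper: reduce to the first coordinate, use the skip-free (left-continuous) structure from \ref{asm: A2} to get $\mathbf{P}(\widehat{\tau}^x<\infty)=p^i$, derive the fixed-point equation by first-step analysis, and then use a convexity/concavity argument (your $\phi(\alpha)$ is the paper's $\alpha-\widehat K(\alpha)$) to pin down the root in $(0,1)$. The one small refinement you add is the explicit verification that $p<1$ (transience via the LLN and \ref{asm: A4}), which the paper glosses over when it asserts $\alpha_1\in(0,1)$ without ruling out $p=1$; this is worth keeping, as the fixed-point equation alone only restricts $p$ to $\{\alpha_1,1\}$.
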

Our notation $\alpha_1$ and $\beta_{-1}$ will become clearer in the following section, where we demonstrate that they are elements of a sequence $\{(\alpha_n,\beta_n)\}_{n\in\mathbb{Z}}$ naturally attached to the model, to be introduced in \eqref{eq: alpha_m beta_m definition}.

\begin{proof}
Lemma~\ref{lem: formula hitting axes} is a classical result in the theory of random walks. Indeed, the problem effectively reduces to a one-dimensional random walk, since only one coordinate plays a role. For completeness, we briefly sketch the proof, as it provides a useful connection with the methods employed in this paper.

Let us first fix $x=(i,j)\in\mathbb{N}_0^2$. We will only prove the result for $\widehat{\tau}^{x}$, as the argument for $\widetilde{\tau}^{x}$ is analogous. For clarity, we omit the value of $j$, indicating it with a dot. Define $\alpha_1=\mathbf{P} (\widehat{\tau}^{1,\cdot}<\infty)$, which implies that $\mathbf{P} (\widehat{\tau}^{i,\cdot}_1<\infty)= \alpha_1^i$ for any $i\geq 1$, since under Assumption \ref{asm: A2}, the corresponding walk is skip-free (or left-continuous). Applying the Markov property yields
\begin{equation*}
\mathbf{P} (\widehat{\tau}^{1,\cdot}<\infty) = \sum_{k,\ell} p_{k,\ell} \mathbf{P} (\widehat{\tau}^{1+k,\cdot}<\infty),
\end{equation*}
which corresponds to the first equation of \eqref{eq: alpha_1, beta_-1 equations}, once we replace $\mathbf{P} (\widehat{\tau}^{k,\cdot}<\infty)$ by $\alpha_1^k$. 

The remaining step is to establish the uniqueness of $\alpha_1$ as a solution to the first equation in \eqref{eq: alpha_1, beta_-1 equations} within the interval $(0,1)$. Define
\begin{equation*}
\widehat{K} (\alpha) = \alpha - \sum_{k,\ell} p_{k,\ell} \alpha^{k+1},\quad \alpha\in [0,1].
\end{equation*}
Since $\widehat{K}''(\alpha) < 0$ for all $\alpha\in (0,1)$, and noting that $\widehat{K}'(0)>0$, $\widehat{K}'(1)<0$, $\widehat{K}(0)<0$, and $\widehat{K}(1)=0$, we conclude that $\widehat{K}(\alpha)$ is strictly concave and attains a maximum at a unique point in $(0,1)$. This ensures that $\alpha_1$ is the only root of $\widehat{K}(\alpha)$ in $(0,1)$. The proof is thus complete.\end{proof}

\begin{proof}[Proof of Proposition~\ref{prop:lower_upper}]
Let us first fix $x=(i,j)\in\mathbb{N}_0^2$. Since $\tau_1^x = \widehat{\tau}^x \wedge \widetilde{\tau}^x$ a.s., then
 \begin{equation*}
     \mathbf{P} \bigl(\widehat{\tau}^{x} < \infty \bigr) \vee \mathbf{P} \bigl(\widetilde{\tau}^{x} < \infty \bigr)  \leq \mathbf{P} \bigl(\tau_{1}^{x} < \infty \bigr)   \leq \mathbf{P} \bigl(\widehat{\tau}^{x} < \infty \bigr) + \mathbf{P} \bigl(\widetilde{\tau}^{x} < \infty \bigr),
 \end{equation*}
 or equivalently, using Lemma~\ref{lem: formula hitting axes},
 \begin{equation}\label{eq: f_0 upper lower}
    \alpha_1^i \vee \beta_{-1}^j \leq 1- f_0(i,j) \leq \alpha_1^i + \beta_{-1}^j.
 \end{equation}
This immediately yields
  \begin{equation*}
     \frac{1}{2} \bigl( \alpha_1^i + \beta_{-1}^j \bigr)  \leq 1- f_0 (i,j) \leq \alpha_1^i + \beta_{-1}^j.
 \end{equation*}
Applying the above inequality to the expectation of the sum $(i,j) + \sum_{k=1}^n\eta_k$ with $n\geq 1$, we have
\begin{align*}
	\frac{1}{2} \left ( \mathbf{E}\alpha_1^{i+\sum_{k=1}^n \Pi_1 (\eta_k) }  + \mathbf{E}\beta_{-1}^{j+\sum_{k=1}^n \Pi_2(\eta_k)  } \right) &\leq 1-\mathbf{E}	f_0\left( (i,j)+ \sum_{k=1}^n\eta_k  \right) \\
    &\leq  \mathbf{E}\alpha_1^{i+\sum_{k=1}^n \Pi_1 (\eta_k) }  + \mathbf{E}\beta_{-1}^{j+\sum_{k=1}^n \Pi_2(\eta_k)  },
\end{align*}
or equivalently with \eqref{eq: f_n(x) exp} and thanks to the independence of $\{\eta_k\}_{1\leq k\leq n}$,
\begin{align*}
	\frac{1}{2} \left( \alpha_1^i \left(\mathbf{E}\alpha_1^{ \Pi_1 (\eta) } \right)^n  +\beta_{-1}^j \left( \mathbf{E} \beta_{-1}^{ \Pi_2 (\eta) } \right)^n \right) &\leq 1- \sum_{k=0}^n f_k(i,j) \\
    &\leq \alpha_1^i \left(\mathbf{E}\alpha_1^{ \Pi_1 (\eta) } \right)^n  +\beta_{-1}^j \left( \mathbf{E} \beta_{-1}^{ \Pi_2 (\eta) } \right)^n.\qedhere
\end{align*}
\end{proof}

\section{A second model: singular random walks with arbitrary boundary reflections}
\label{sec:model-2}

\subsection{The model}

In this section, we investigate a class of reflected random walks in which the transition probabilities along the horizontal and vertical axes may differ. An additional constraint is imposed in the interior of the quadrant: the walk is not allowed to jump closer to the origin. Such processes are known as singular random walks. We now give a precise definition of this class. To maintain notational consistency throughout the paper, we largely retain the symbols introduced in the previous section, adapting them as needed to the present setting.

Let $\xi$, $\eta^\mathbf{h}$, $\eta^\mathbf{v}$ and $\eta^\mathbf{o}$ be random variables in $\mathbb{Z}^2$ characterized by their pmfs (the letters ``$\mathbf{h}$'', ``$\mathbf{v}$'', and ``$\mathbf{o}$'' standing for horizontal, vertical, and origin respectively): for all $( k,\ell) \in\mathbb{Z}^2$,
\begin{equation*}
    \mathbf{P} (\xi = (k,\ell) ) = p_{k,\ell},\
    \mathbf{P}(\eta^\mathbf{h} = (k,\ell) ) = h_{k,\ell},
    \
    \mathbf{P}(\eta^\mathbf{v} = (k,\ell) ) = v_{k,\ell},
    \
    \mathbf{P}(\eta^\mathbf{o} = (k,\ell) ) = q_{k,\ell}
   .
\end{equation*}
The set of weights $\{p_{k,\ell}\}_{k,\ell}$, $\{h_{k,\ell}\}_{k,\ell}$, $\{v_{k,\ell}\}_{k,\ell}$, and $\{q_{k,\ell}\}_{k,\ell}$ are non-negative and satisfy the following conditions:

\bigskip

\noindent\begin{minipage}{0.62\textwidth}
\begin{enumerate}[label=(B\arabic{*}),ref={\rm (B\arabic{*})}]
	\item\label{asm: B1}  $\sum p_{k,\ell} = \sum h_{k,\ell} =\sum v_{k,\ell}=\sum q_{k,\ell} = 1$ \\ (\textit{normalization})
	\item\label{asm: B2} $p_{k,\ell}=0$ for all $k,\ell \leq -2$  (\textit{small $<0$ jumps})
	\item\label{asm: B3} $p_{-1,-1} = p_{-1,0} = p_{0,-1}= 0$  (\textit{singular walks})
    \item\label{asm: B4} $p_{-1,1}p_{1,-1}\not=0$ (\textit{non-degeneracy})
    \item\label{asm: B5} $p_{k,\ell}>0$ for some $(k,\ell)$ with $k+\ell>0$
    \item\label{asm: B6} $\sum_{k,\ell}p_{k,\ell}e^{kx+\ell y}$ is finite in a neighborhood of any point of the curve \eqref{eq: level set L} (\textit{moment assumption})
    \item\label{asm: B7} $h_{k,\ell}=v_{k,\ell}=q_{k,\ell}=0$ if $k\leq 0$ and $\ell \leq 0$ 
    
    (\textit{$\geq 0$ jumps})
    \item\label{asm: B8} $0<\sum_{k,\ell}\ell h_{k,\ell}< \infty$ and $\sum_{k,\ell}k h_{k,\ell}< \infty$; \\$0<\sum_{k,\ell}k v_{k,\ell}< \infty$ and $ \sum_{k,\ell} \ell v_{k,\ell}< \infty$;\\ $0<\sum_{k,\ell}k q_{k,\ell}< \infty$ and $0<\sum_{k,\ell}\ell q_{k,\ell}< \infty$ 
    
    (\textit{boundary first moments and drifts})
\end{enumerate}
\vspace{40mm}
\end{minipage}
\vspace{0mm}
 \begin{tikzpicture}[
    x=5.4mm,
    y=5.4mm,
  ]
\vspace{2cm}
     \draw [fill=white,gray!20,draw opacity=1] (-3,-3) rectangle (6,-1);
      \draw [fill=white,gray!20,draw opacity=1] (-3,-3) rectangle (-1,6);
    \draw[thin]
      \foreach \x in {-3, ..., 6} {
        (\x, -3) -- (\x, 6)
      }
      \foreach \y in {-3, ..., 6} {
        (-3, \y) -- (6, \y)
      }
    ;  

    \begin{scope}[
      semithick,
      ->,
      >={Stealth[]},
    ]
   
   \draw[->,thick,blue] (3,3) -- (4,4) node[ above right ]{$p_{k,\ell}$};
   \draw[->,thick,blue] (3,3) -- (2,4);
   \draw[->,thick,blue] (3,3) -- (4,2);
      \draw[->,thick,blue] (3,3) -- (5,3);
       \draw[->,thick,blue] (3,3) -- (4,5);
   
   \draw[->,thick,red] (2,-1) -- (3,-1);
    \draw[->,thick,red] (2,-1) -- (3,0) node[right]{$h_{k,\ell}$};
    \draw[->,thick,red] (2,-1) -- (3,1);
    \draw[->,thick,red] (2,-1) -- (2,0);
    
    \draw[->,thick,red] (-1,2) -- (-1,3);
    \draw[->,thick,red] (-1,2) -- (0,4) node[above]{$v_{k,\ell}$};
    \draw[->,thick,red] (-1,2) -- (2,3);

    \draw[->,thick,OliveGreen] (-1,-1) -- (-1,0);
    \draw[->,thick,OliveGreen] (-1,-1) -- (1,1) node[above]{$q_{k,\ell}$};
    \draw[->,thick,OliveGreen] (-1,-1) -- (1,-1);
    
      \draw (-1, -3.5) -- (-1, 6.5);
      \draw (-3.5,-1) -- (6.5, -1);
    \end{scope}
    \end{tikzpicture}

\vspace{-40mm}

Let $\{\xi_n\}_{n\geq 0}$, $\{\eta_n^\mathbf{h}\}_{n\geq 0}$, $\{\eta_n^\mathbf{v}\}_{n\geq 0}$, and $\{\eta_n^\mathbf{o}\}_{n\geq 0}$ be sequences of independent copies of $\xi$, $\eta^\mathbf{h}$, $\eta^\mathbf{v}$, and $\eta^\mathbf{o}$, respectively. For an arbitrary $x\in \mathbb{N}_0^2$, the random walk $\{S_n^x\}_{n\geq 0}$ is now defined by $S_0^x = x$ and
\begin{multline*}
    S_{n+1}^x ={} S_{n}^x + \xi_{n+1}\cdot \mathbf{1}_{\bigl\{ S_n^x \in \mathbb{N}^2 \bigr\}} + \eta_{n+1}^\mathbf{h}\cdot \mathbf{1}_{\bigl\{ S_n^x \in \mathbb{N} \times \{0\} \bigr\}} \\
     + \eta_{n+1}^\mathbf{v}\cdot \mathbf{1}_{\bigl\{ S_n^x \in \{0\}\times\mathbb{N} \bigr\}} + \eta_{n+1}^\mathbf{o}\cdot \mathbf{1}_{\bigl\{ S_n^x =(0,0) \bigr\}},\quad n\geq 0.
\end{multline*}
Since the random walk $\{S_n^x\}_{n\geq 0}$ with $x\neq (0,0)$ is singular and thus cannot reach the origin, the final term $\eta_{n+1}^\mathbf{o}\cdot \mathbf{1}_{\left\{ S_n^x =(0,0) \right\}}$ in the definition of $S_n^x$ can be omitted as soon as $x\not=0$. As a result, the analysis of the number of boundary contacts $Z_x$ for starting points $x \neq (0,0)$ does not depend on the specific assumptions made on the weight set $\{q_{k,\ell}\}_{k,\ell}$. Consequently, in this section, we will restrict our attention to random walks starting at $x \neq (0,0)$. The corresponding results for the initial point $x = (0,0)$ will then follow directly from these.


The model presented above can be viewed as a variation of the singular models originally studied in the queueing systems literature \cite{Ad-91,AdWaZi-90,AdWeZi-93,AdBoRe-01}. In these works, the focus lies on the stationary distribution of such models; to render the model stationary, it is necessary to reverse the random walk, that is, to replace the $p_{k,\ell}$ with $p_{-k,-\ell}$. Singular models have also been studied from a combinatorial perspective in \cite{AvVLRa-13,DrHaRoSi-20}. Closer to the present work, harmonic functions associated with singular walks killed at the boundary are analysed in \cite{HoRaTa-23}. In particular, the escape (or survival) probability is computed in closed form. This is closely related to our setting, as the escape probability corresponds to the probability of avoiding the boundary—that is, the probability of having no contact.

As in the previous section, we define the number of boundary contacts $Z_x$ according to \eqref{eq: definition Z_x}, and the sequence of hitting times $\{\tau_n^x\}_{n\geq 1}$ as given in \eqref{eq: definition tau_n^x}. Since the assertion on the almost sure finiteness of $Z_x$ in Lemma~\ref{lem: Z_x finite} does not rely on the identical boundary reflection assumption, it remains valid under the Assumptions~\ref{asm: B1}--\ref{asm: B8}. As in the first model, our goal is to study the pmf $f_n(i,j)$ of the boundary contacts $Z_{(i,j)}$; see \eqref{eq:def_f_n}. However, the coupling technique used previously is no longer applicable, due to the added complexity arising from arbitrary boundary reflections. Instead, we adopt the compensation approach, which will enable us to express the solution through series expansions. In this second model, we thus focus on analysing the generating function of $Z_{(i,j)}$, defined by
\begin{equation}
\label{eq:def_F}
	F(i,j,z)  = \sum_{n\geq 0} f_n(i,j) z^n,\quad (i,j)\in\mathbb{N}_0^2,\quad \vert z\vert\leq 1.
\end{equation}

\subsection{Main result}

For any $i,j\geq 0$, introduce
    \begin{equation}
    \label{eq:def_function_G}
	G(i,j,z) = \sum_{m\in  \mathbb	{Z} }  \bigl( c_m(z) \alpha_m^i \beta_m^j + d_{m+1}(z) \alpha_{m+1}^i \beta_m^j \bigr),
\end{equation} 
where $\left\{ (\alpha_m,\beta_m )\right\}_{m\in\mathbb{Z}}$ is defined in \eqref{eq: alpha_m beta_m definition}, and for any $m\in\mathbb Z$
\begin{equation}\label{eq: c_m d_m definition}
    \left\{\begin{array}{rcl}
        c_0 (z)&=& 1, \\
        d_{m+1}(z) &=& \displaystyle - c_m(z) \frac{1-   z \sum_{k,\ell} v_{k,\ell} \alpha_m^{k} \beta_m^{\ell}}{1 -   z \sum_{k,\ell} v_{k,\ell} \alpha_{m+1}^{k} \beta_m^{\ell}},\smallskip\\
        c_m (z) &=& \displaystyle - d_m (z) \frac{1 -  z \sum_{k,\ell} h_{k,\ell} \alpha_m^{k} \beta_{m-1}^{\ell} }{1 -  z \sum_{k,\ell} h_{k,\ell} \alpha_m^{k} \beta_{m}^{\ell} }.\end{array}\right.
\end{equation}
\begin{theorem}\label{thm: G(i,j,z) generating function}
Under Assumptions~\ref{asm: B1}--\ref{asm: B8}, for any $(i,j)\in\mathbb N_0^2\setminus\{(0,0)\}$, the generating function of $Z_{(i,j)}$ in \eqref{eq:def_F} is equal to $G(i,j,z)$ in \eqref{eq:def_function_G}; in other words, $F(i,j,z) = G(i,j,z)$.
\end{theorem}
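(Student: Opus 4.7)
The plan is to apply the \textbf{compensation approach}. In a first step I would derive, by first-step analysis, the system of functional equations obeyed by $F(i,j,z)$. A starting point in $\mathbb{N}^2$ contributes no contact, while a starting point on one of the axes contributes one contact and hence a factor $z$:
\begin{align*}
F(i,j,z) &= \sum_{k,\ell}p_{k,\ell}\,F(i+k,j+\ell,z),\quad (i,j)\in\mathbb{N}^2,\\
F(i,0,z) &= z\sum_{k,\ell}h_{k,\ell}\,F(i+k,\ell,z),\quad i\geq 1,\\
F(0,j,z) &= z\sum_{k,\ell}v_{k,\ell}\,F(k,j+\ell,z),\quad j\geq 1.
\end{align*}
Crucially, the singularity assumption~\ref{asm: B3} ensures that none of these equations invokes $F(0,0,z)$, so the subsystem on $\mathbb{N}_0^2\setminus\{(0,0)\}$ is closed; moreover, $|F(i,j,z)|\leq 1$ for $|z|\leq 1$ and $F(i,j,z)\to 1$ as $i+j\to\infty$.

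In a second step I would check that $G(i,j,z)$ solves this system term-by-term. A monomial $\alpha^i\beta^j$ satisfies the interior equation if and only if $(\alpha,\beta)$ lies on the kernel curve $1=\sum_{k,\ell}p_{k,\ell}\alpha^k\beta^\ell$. By construction, the sequence $\{(\alpha_m,\beta_m)\}_{m\in\mathbb{Z}}$ from \eqref{eq: alpha_m beta_m definition} is obtained by zig-zagging between the roots of this curve, so each summand $\alpha_m^i\beta_m^j$ and $\alpha_{m+1}^i\beta_m^j$ is individually a product solution in the interior. To match the horizontal boundary equation, I substitute $G(i,0,z)$ and regroup by the powers $\alpha_m^i$, reindexing the $d_{m+1}$-sum via $m\mapsto m-1$; the resulting identity
\begin{equation*}
c_m(z)+d_m(z) = z\,c_m(z)\sum_{k,\ell}h_{k,\ell}\alpha_m^k\beta_m^\ell + z\,d_m(z)\sum_{k,\ell}h_{k,\ell}\alpha_m^k\beta_{m-1}^\ell
\end{equation*}
rearranges exactly to the third relation of \eqref{eq: c_m d_m definition}. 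The analogous calculation on the vertical axis, regrouping by $\beta_m^j$, produces the second relation, while $c_0(z)=1$ supplies the overall normalization.

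The principal obstacle is the third step: proving that the doubly infinite series \eqref{eq:def_function_G} converges absolutely for every $(i,j)\in\mathbb{N}_0^2\setminus\{(0,0)\}$ and $|z|\leq 1$, and then identifying the resulting function with $F$. Convergence requires showing that $|\alpha_m|^i|\beta_m|^j\to 0$ geometrically as $m\to\pm\infty$, which relies on a careful analysis of the kernel curve under~\ref{asm: B4}--\ref{asm: B5}; the moment assumption~\ref{asm: B6} then ensures that the denominators $1-z\sum v_{k,\ell}\alpha_{m+1}^k\beta_m^\ell$ and $1-z\sum h_{k,\ell}\alpha_m^k\beta_m^\ell$ in \eqref{eq: c_m d_m definition} remain bounded away from zero, keeping the coefficients $c_m(z), d_m(z)$ of moderate size. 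For the identification $F=G$, I would verify that $G$ is bounded on $\mathbb{N}_0^2\setminus\{(0,0)\}$ and has the same large-$(i+j)$ behaviour as $F$, and then invoke uniqueness of solutions to the system of Step~1 within this class—for instance by iterating the interior equation and exploiting the positive interior drift implicit in~\ref{asm: B5} to push mass to infinity. This convergence-plus-uniqueness step is where the bulk of the technical work is expected to lie.
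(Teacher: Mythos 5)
Your Steps 1 and 2 match the paper's Proposition~\ref{prop: recurrence relation F(i,j,z)} and the term-by-term verification in Lemma~\ref{lem: G(i,j,z) convergence}: the first-step analysis, the regrouping into $H_m$ and $V_m$ blocks, and the convergence argument via the geometric decay of $(\alpha_m,\beta_m)$ are all essentially what the paper does. (One small misattribution: the boundedness of the denominators in \eqref{eq: c_m d_m definition} comes from Assumptions~\ref{asm: B7}--\ref{asm: B8} and the decay $\alpha_m,\beta_m\to 0$, not from the interior moment assumption \ref{asm: B6}, which governs the curve $\mathcal L$.)

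The genuine gap is in Step~3. You propose to "invoke uniqueness of solutions to the system of Step~1 within this class," but the paper explicitly shows—in the subsection ``A remark on non-uniqueness of the solutions''—that the system \eqref{eq: recurrence relation F(i,j,z)}--\eqref{eq: terminal condition F(i,j,1)} admits infinitely many solutions: for any bounded $c_0(z)$ with $c_0(1)=1$, one gets another valid series; and shifting the base pair $(\alpha_0,\beta_0)$ along the kernel curve produces still more solutions of the recursion and boundary conditions. Thus a clean "uniqueness within the class $G\to 1$ at $\infty$" is not available at the generating-function level without extra work, and your fallback suggestion to get uniqueness "by iterating the interior equation and exploiting the positive interior drift" is not a working argument as stated—there exist nonzero bounded harmonic functions vanishing on each axis separately (e.g.\ $1-\alpha_1^i$), so one must control behavior at infinity precisely. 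What the paper actually does is prove $f_n=g_n$ \emph{term by term in $n$}: it expands $G(i,j,z)=\sum_n g_n(i,j)z^n$, establishes the asymptotics of $g_n(i,j)$ as $i+j\to\infty$ directly from the series \eqref{eq: g_n asymptotic}, and then—this is the missing piece in your plan—derives the matching asymptotics for $f_n(i,j)$ by a separate probabilistic analysis involving the hitting probabilities $f_{[\mathbf{v}]}, f_{[\mathbf{h}]}$ and the conditioned measures $\mathbf{P}_n^{\mathbf{v}}, \mathbf{P}_n^{\mathbf{h}}$ (Lemmas~\ref{lem: f_0 asymptotic}--\ref{lem: probability from axis to axis} and \eqref{eq: f_n asymptotic behavior}). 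Only then does the maximum principle for discrete harmonic functions, applied inductively in $n$, give $f_n-g_n\equiv 0$. Without that asymptotic analysis of $f_n$, the identification $F=G$ is not justified.
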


For the case $(i,j)=(0,0)$, the generating function $F(0,0,z)$ of $Z_{(0,0)}$ is determined from the expression of other $F(i,j,z)$ via Eq.~\eqref{eq: condition at origin F(i,j,z)}. A large part of the remainder of Section~\ref{sec:model-2} is devoted to the proof of Theorem~\ref{thm: G(i,j,z) generating function}.

\subsection{A bivariate recursion for the boundary contacts generating function}
This subsection aims to formulate a recursion for $F(i,j,z)$, which we will solve using the compensation approach. In Lemma~\ref{lem: recurrence relation f_n(i,j)}, we first establish a bivariate recursion for the pmf $f_n(i,j)$. Building on this, we derive in Proposition~\ref{prop: recurrence relation F(i,j,z)} the corresponding recursive expressions for $F(i,j,z)$.

\begin{lemma}
\label{lem: recurrence relation f_n(i,j)}
	Under Assumption~\ref{asm: B1}--\ref{asm: B8}, the function $f_n:\mathbb{N}_0^2\to [0,1]$ in \eqref{eq:def_f_n} satisfies the recurrence relation
	\begin{equation}\label{eq: recurrence relation f_n(i,j)}
		f_n(i,j) = \sum_{k,\ell} p_{k,\ell}f_n(i+k,j+\ell) , \quad (i,j)\in\mathbb{N}^2,\quad n\geq 0,
\end{equation}
with the horizontal (boundary)\ condition
\begin{equation}
\label{eq: horizontal condition f_n(i,j)}
	f_n (i,0) =\left\{ \begin{array}{ll}
	0 & \text{if } n=0 \\
	\sum_{k,\ell} h_{k,\ell} f_{n-1} ( i+k, \ell)&  \text{if } n\geq 1\end{array}\right\},
\quad i\in\mathbb N,
\end{equation}
the vertical (boundary)\ condition
\begin{equation}
\label{eq: vertical condition f_n(i,j)}
	f_n (0,j) = \left\{\begin{array}{ll}
	0 & \text{if }  n=0 \\
	\sum_{k,\ell} v_{k,\ell} f_{n-1} ( k, j+ \ell) & \text{if }  n\geq 1
\end{array}\right\},
\quad j\in\mathbb N,
\end{equation}
and the condition at the origin
\begin{equation}
\label{eq: condition at origin f_n(i,j)}
	f_n (0,0) = \left\{\begin{array}{ll}
	0 & \text{if }  n=0 \\
	\sum_{k,\ell} q_{k,\ell} f_{n-1} ( k, \ell) & \text{if }  n\geq 1
\end{array}\right\}.
\end{equation}
\end{lemma}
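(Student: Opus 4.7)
The plan is to prove all four identities \eqref{eq: recurrence relation f_n(i,j)}--\eqref{eq: condition at origin f_n(i,j)} by a uniform first-step analysis: I condition on the first jump of the walk $\{S_n^{(i,j)}\}_{n\geq 0}$ and apply the Markov property to reduce $Z_{(i,j)}$ to a shifted copy of itself. The four cases correspond to the four regions appearing in the definition of $S_{n+1}^{x}$ (interior, horizontal axis, vertical axis, origin), and each yields one of the displayed relations.

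For the interior recursion, fix $(i,j)\in\mathbb N^2$. Since $(i,j)\notin\mathbb N_0^2\setminus\mathbb N^2$, the summand at $n=0$ in \eqref{eq: definition Z_x} is zero, so the starting point does not contribute to $Z_{(i,j)}$. The first increment is distributed as $\xi$, and Assumptions~\ref{asm: B2} and \ref{asm: B3} guarantee that $(i,j)+\xi\in\mathbb N_0^2$ almost surely (the components of $\xi$ are at least $-1$, and the forbidden steps $(-1,-1)$, $(-1,0)$, $(0,-1)$ have zero probability). By the Markov property, conditionally on $\xi=(k,\ell)$ the process $\{S^{(i,j)}_{n+1}\}_{n\geq 0}$ has the same law as $\{S^{(i+k,j+\ell)}_{n}\}_{n\geq 0}$, so $Z_{(i,j)}\stackrel{d}{=}Z_{(i+k,j+\ell)}$. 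Taking expectation over $\xi$ weighted by $p_{k,\ell}$ yields \eqref{eq: recurrence relation f_n(i,j)}.

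For the horizontal boundary condition, fix $(i,0)$ with $i\geq 1$. Now $(i,0)\in\mathbb N_0^2\setminus\mathbb N^2$, so the $n=0$ indicator in \eqref{eq: definition Z_x} equals $1$, giving $Z_{(i,0)}\geq 1$ almost surely, hence $f_0(i,0)=0$. The first step uses $\eta^{\mathbf h}$, which by \ref{asm: B7} (together with the implicit constraint keeping the walk in $\mathbb N_0^2$) lands in $\mathbb N_0^2$. The Markov property then gives, conditionally on $\eta^{\mathbf h}=(k,\ell)$,
\begin{equation*}
Z_{(i,0)} \stackrel{d}{=} 1 + Z_{(i+k,\ell)},
\end{equation*}
so that $\mathbf P(Z_{(i,0)}=n)=\sum_{k,\ell}h_{k,\ell}\,\mathbf P(Z_{(i+k,\ell)}=n-1)$ for $n\geq 1$, which is exactly \eqref{eq: horizontal condition f_n(i,j)}. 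The vertical identity \eqref{eq: vertical condition f_n(i,j)} and the origin identity \eqref{eq: condition at origin f_n(i,j)} follow by the same argument, replacing $\eta^{\mathbf h}$ by $\eta^{\mathbf v}$ and $\eta^{\mathbf o}$ respectively, and using \ref{asm: B7} in each case to guarantee that the post-jump position lies in $\mathbb N_0^2$.

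I do not expect a substantive obstacle here: the only point that requires some care is the bookkeeping check that, in every case, the first jump keeps the walk inside $\mathbb N_0^2$, so that the reduced sub-walk appearing after one step is again of the form $\{S_n^{x'}\}_{n\geq 0}$ for an admissible starting point $x'\in\mathbb N_0^2$. For the interior this comes from \ref{asm: B2} and the singularity assumption \ref{asm: B3}; for the three boundary cases this is built into \ref{asm: B7}. Once this is verified, each identity is just the law of total probability applied after one step.
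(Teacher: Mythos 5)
Your proposal is correct and follows essentially the same first-step (Markov) decomposition as the paper: conditioning on the first jump, observing that an interior start contributes nothing while a boundary start contributes one to $Z_x$, and then invoking the law of total probability. The extra bookkeeping you add (checking that the post-jump position stays in $\mathbb N_0^2$ via \ref{asm: B2} and \ref{asm: B7}) is sound, though the paper leaves it implicit.
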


\begin{proof}
Our reasoning is straightforward and relies on decomposing the random walk according to its first step. Observe that for any $(i,j)\in \mathbb{N}^2$ and all $k,\ell\geq -1$, we have $\mathbf{P} ( Z_{(i,j)}\in\cdot | S_1^{(i,j)}=(i+k,j+\ell)) = \mathbf{P} ( Z_{(i+k,j+\ell)}\in\cdot ) $. As a consequence, for any $n\geq 0$,
\begin{align*}
	f_n(i,j) &=  \sum_{k,\ell} p_{k,\ell}  \mathbf{P} \left(Z_{(i,j)} =n \left| S_1^{(i,j)} =(i+k,j+\ell)\right. \right) \\
    &= \sum_{k,\ell} p_{k,\ell}  \mathbf{P} \left(Z_{(i+k,j+\ell)} =n \right)=   \sum_{k,\ell} p_{k,\ell} f_n (i+k,j+\ell),
\end{align*}
which establishes \eqref{eq: recurrence relation f_n(i,j)}.
Now, given that for any $i>0$, $\mathbf{P} ( Z_{(i,0)}\in\cdot | S_1^{(i,0)}=(i+k,\ell)) = \mathbf{P} ( 1+ Z_{(i+k,\ell)}\in\cdot ) $, we have
\begin{equation*}
	f_n(i,0) =  \sum_{k,\ell} h_{k,\ell}  \mathbf{P} \left(Z_{(i,0)} =n \left| S_1^{(i,0)} =(i+k,\ell)\right. \right) = \sum_{k,\ell} h_{k,\ell}  \mathbf{P} \left(1+Z_{(i+k,\ell)} =n \right),
\end{equation*}
which is $0$ if $n=0$, and otherwise equals $\sum_{k,\ell} h_{k,\ell} f_n(i+k,\ell)$, thereby establishing the horizontal condition \eqref{eq: horizontal condition f_n(i,j)}. The vertical condition \eqref{eq: vertical condition f_n(i,j)} and the condition \eqref{eq: condition at origin f_n(i,j)} at the origin follow by analogous arguments.
\end{proof}

\begin{proposition}
\label{prop: recurrence relation F(i,j,z)}
	Under Assumption~\ref{asm: B1}--\ref{asm: B8}, the function $F:\mathbb{N}_0^2\times [0,1]\rightarrow [0,1]$ in \eqref{eq:def_F} satisfies the recurrence relation
	\begin{equation}\label{eq: recurrence relation F(i,j,z)}
		F(i,j,z) = \sum_{k,\ell} p_{k,\ell} F(i+k,j+\ell,z) ,\quad  (i,j)\in\mathbb{N}^2,
	\end{equation}
	with the horizontal (boundary)\ condition
	\begin{equation}
	\label{eq: horizontal condition F(i,j,z)}
	F(i,0,z) = z\sum_{k,\ell} h_{k,\ell} F(i+k,\ell,z),\quad i\in\mathbb N,
\end{equation}
the vertical (boundary)\ condition
	\begin{equation}
	\label{eq: vertical condition F(i,j,z)}
	F(0,j,z) = z\sum_{k,\ell} v_{k,\ell} F(k,j+\ell,z),\quad j\in\mathbb N,
\end{equation}
and the terminal condition
\begin{equation}\label{eq: terminal condition F(i,j,1)}
	F(i,j,1) = 1,\quad (i,j)\in\mathbb{N}_0^2\setminus\{(0,0)\}.
\end{equation}
\end{proposition}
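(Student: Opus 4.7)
The plan is to derive Proposition~\ref{prop: recurrence relation F(i,j,z)} as a direct generating-function repackaging of Lemma~\ref{lem: recurrence relation f_n(i,j)}: each of the four assertions on $F$ follows from multiplying the corresponding identity for $f_n$ by $z^n$ and summing over $n\geq 0$, once the interchange of the $(k,\ell)$-summation and the $n$-summation is justified, together with an appeal to Lemma~\ref{lem: Z_x finite} for the terminal value at $z=1$.

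First, I would observe that $F(i,j,z)$ defined in \eqref{eq:def_F} converges absolutely for $|z|\leq 1$, since $\{f_n(i,j)\}_{n\geq 0}$ is a probability mass function and hence $\sum_{n\geq 0}f_n(i,j)\leq 1$. Combined with the normalizations in \ref{asm: B1}, namely $\sum_{k,\ell}p_{k,\ell}=\sum_{k,\ell}h_{k,\ell}=\sum_{k,\ell}v_{k,\ell}=1$, this gives summable non-negative majorants and lets me apply Tonelli to swap the two summations in each of the three recursions. For the interior relation \eqref{eq: recurrence relation F(i,j,z)}, multiplying \eqref{eq: recurrence relation f_n(i,j)} by $z^n$ and summing on $n\geq 0$ yields $F(i,j,z)=\sum_{k,\ell}p_{k,\ell}F(i+k,j+\ell,z)$. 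For the horizontal boundary condition \eqref{eq: horizontal condition F(i,j,z)}, I would start from \eqref{eq: horizontal condition f_n(i,j)}, use $f_0(i,0)=0$ to discard the $n=0$ term, and reindex $m=n-1$:
\[
F(i,0,z)=\sum_{n\geq 1}z^n\sum_{k,\ell}h_{k,\ell}f_{n-1}(i+k,\ell)=z\sum_{k,\ell}h_{k,\ell}F(i+k,\ell,z),
\]
where the factor $z$ encodes precisely the one-step time shift built into the boundary recursion for $f_n$. The vertical condition \eqref{eq: vertical condition F(i,j,z)} is obtained by the symmetric argument from \eqref{eq: vertical condition f_n(i,j)}.

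For the terminal condition \eqref{eq: terminal condition F(i,j,1)}, I would simply evaluate $F(i,j,1)=\sum_{n\geq 0}f_n(i,j)=\mathbf{P}(Z_{(i,j)}<\infty)$ and invoke Lemma~\ref{lem: Z_x finite}, which, as noted in the text, remains valid under \ref{asm: B1}--\ref{asm: B8}; this gives $F(i,j,1)=1$. The exclusion of the origin in the statement is consistent with this argument (in fact one also has $F(0,0,1)=1$) and merely reflects the authorial choice to treat $(0,0)$ separately via \eqref{eq: condition at origin f_n(i,j)}. No substantive obstacle arises: the content is pure bookkeeping, and the only care needed is to track the $z$-shift coming from the boundary increments and to write down the Fubini/Tonelli justification explicitly.
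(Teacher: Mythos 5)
Your proposal is correct and follows essentially the same route as the paper: multiply the recursions of Lemma~\ref{lem: recurrence relation f_n(i,j)} by $z^n$, sum over $n\geq 0$ (using the vanishing $n=0$ boundary terms to pull out the factor $z$), and obtain the terminal condition from Lemma~\ref{lem: Z_x finite}. The only difference is that you make explicit the absolute-convergence/Tonelli justification for interchanging the two sums, which the paper leaves implicit.
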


We note that at the origin, the function $F$ satisfies the condition
\begin{equation}\label{eq: condition at origin F(i,j,z)}
    F(0,0,z) = z\sum_{k,\ell} q_{k,\ell} F(k,\ell,z),\quad z\in[0,1].
\end{equation}
This condition is not included in Proposition~\ref{prop: recurrence relation F(i,j,z)} because the system~\eqref{eq: recurrence relation F(i,j,z)}--\eqref{eq: terminal condition F(i,j,1)} is formulated independently of the set of weights $\{q_{k,\ell}\}_{k,\ell}$. The subsequent analysis is thus devoted entirely to solving the system without invoking Eq.~\eqref{eq: condition at origin F(i,j,z)}. 

\begin{proof}
	For any $(i,j)\in\mathbb N^2$, multiplying Eq.~\eqref{eq: recurrence relation f_n(i,j)} by $z^n$ and summing over $n\geq 0$ yields Eq.~\eqref{eq: recurrence relation F(i,j,z)}. Similarly, Eq.~\eqref{eq: horizontal condition F(i,j,z)} is obtained by applying the same procedure to Eq.~\eqref{eq: horizontal condition f_n(i,j)}. The vertical condition \eqref{eq: vertical condition F(i,j,z)} and the condition \eqref{eq: condition at origin F(i,j,z)} at the origin follow by analogous reasoning. Finally, since $Z_{(i,j)} < \infty$ almost surely for all $(i,j)\in \mathbb{N}_0^2$ (see Lemma~\ref{lem: Z_x finite}), Eq.~\eqref{eq: terminal condition F(i,j,1)} follows directly.
\end{proof}

\subsection{Constructing solutions to the recurrence equations}
In this subsection, we apply the compensation approach to construct a family of solutions for the system \eqref{eq: recurrence relation F(i,j,z)}--\eqref{eq: terminal condition F(i,j,1)}. The general idea is outlined as follows. We begin with a base term that satisfies the recurrence relation \eqref{eq: recurrence relation F(i,j,z)}. We then add a new term which still satisfies the recurrence, but corrects the error in the vertical condition \eqref{eq: vertical condition F(i,j,z)}. This correction, however, introduces a new error in the horizontal condition \eqref{eq: horizontal condition F(i,j,z)}, which we address by adding another term. By continuing this alternating process (correcting the vertical condition, then the horizontal one, and so on), we build a series-form solution
\begin{equation}
\label{eq:evocation_sequence}
    \ldots+d_{-1}\alpha_{-1}^i\beta_{-2}^j+c_{-1}\alpha_{-1}^i\beta_{-1}^j + d_0\alpha_0^i\beta_{-1}^j+c_0\alpha_0^i\beta_0^j + d_1\alpha_1^i\beta_0^j + c_1\alpha_1^i\beta_1^j + d_2\alpha_2^i\beta_1^j + \ldots
\end{equation}
where each term satisfies \eqref{eq: recurrence relation F(i,j,z)}, each sum $c_m\alpha_m^i\beta_m^j + d_{m+1}\alpha_{m+1}^i\beta_m^j$ satisfies \eqref{eq: vertical condition F(i,j,z)}, and each sum $d_{m}\alpha_{m}^i\beta_{m-1}^j+c_m\alpha_m^i\beta_m^j$ satisfies \eqref{eq: horizontal condition F(i,j,z)}. As already mentioned, this compensation method has been introduced in the nineties by Adan, Wessels and Zijm in a series of papers \cite{Ad-91,AdWaZi-90,AdWeZi-93}.

The current section is structured around a few preliminary results aimed at constructing the sequence $\{(\alpha_n,\beta_n)\}_{n\in\mathbb{Z}}$, see Lemmas~\ref{lem: level set L}--\ref{lem: alpha_m beta_m decay}. These results will subsequently be used to build the solutions to the recursion in the series form \eqref{eq:evocation_sequence} presented above, and to establish its absolute convergence. Note that the variable $z$ in Equations~\eqref{eq: horizontal condition F(i,j,z)} and \eqref{eq: vertical condition F(i,j,z)} serves solely as a parameter. Accordingly, we assume that $z\in [0,1]$ is fixed throughout the analysis.

Let us begin with the readily verified fact that a product form $\alpha^i \beta^j$ is a solution of \eqref{eq:	 recurrence relation F(i,j,z)} for any $(i,j)\in \mathbb N_0^2$ if and only if $(\alpha,\beta)$ satisfies the equation
\begin{equation*}
	\alpha \beta = \sum_{-1\leq k,\ell} p_{k,\ell} \alpha^{k+1} \beta^{\ell+1}.
\end{equation*}
To study the above equation, let us define the corresponding bivariate polynomial (sometimes called the kernel of the model)
\begin{equation*}
	K(\alpha,\beta) =  \sum_{-1\leq k,\ell} p_{k,\ell} \alpha^{k+1} \beta^{\ell+1} - \alpha\beta, 
\end{equation*}
which is fully characterized by the weights $\{p_{k,\ell}\}_{k,\ell}$.
Its associated level set
\begin{equation}
\label{eq:level_set_K}
   \mathcal{K} = \{ (\alpha,\beta)\in [0,\infty)^2: K(\alpha,\beta) =0\}
\end{equation}
(see Figure~\ref{fig: curves L and K})\ will turn out to be particularly important. The function $K(\alpha,\beta)$ has in fact been thoroughly studied in \cite[Sec.~2]{HoRaTa-23}. Still, we present below a few points that are particularly relevant to our aims. Our starting point in analyzing $K(\alpha,\beta)$ is through its exponential transform, defined for $(x,y)\in\mathbb{R}^2$ by
\begin{equation*}
	L(x,y) = e^{-(x+y)} K(e^x,e^y) =  \sum_{-1\leq k,\ell} p_{k,\ell} e^{xk+y\ell}- 1.
\end{equation*}
We also define the level set of $L(x,y)$ as follows, along with its associated strict sub- and super-level sets:
\begin{gather}
    \label{eq: level set L}\mathcal{L} = \{ (x,y)\in \mathbb{R}^2 : L(x,y) =0\},\\
    \mathcal{L}^- = \{ (x,y)\in \mathbb{R}^2 : L(x,y)<0\}\quad\text{and}\quad
    \mathcal{L}^+ = \{ (x,y)\in \mathbb{R}^2 : L(x,y)>0\}, \notag
\end{gather}
see Figure~\ref{fig: curves L and K}. Throughout the analysis, several properties of $K(\alpha,\beta)$ can be transferred from those of $L(x,y)$, as the latter exponential transform is, in fact, more tractable.

\begin{figure}
    \centering
    \includegraphics[width=0.3\linewidth]{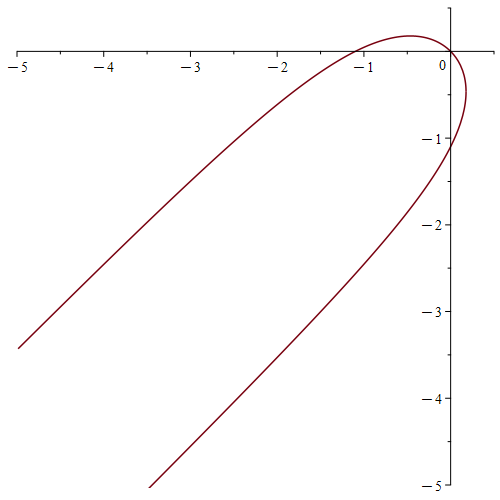}\qquad\qquad
    \includegraphics[width=0.3\linewidth]{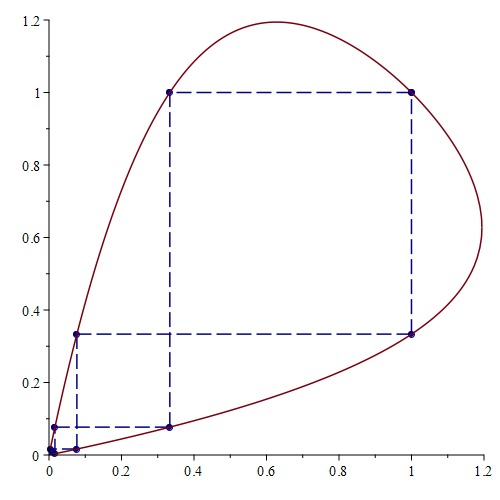}
    \caption{The curves $\mathcal{L}$ (left), $\mathcal{K}$ (right), and the sequence of pairs $\{(\alpha_m,\beta_m)\}_{m\in\mathbb{Z}}$ (blue points) for the random walk with $p_{1,-1} = p_{-1,1} = p_{1,0} = p_{0,1} = p_{1,1} = \tfrac{1}{5}$.  }
    \label{fig: curves L and K}
\end{figure}

\begin{lemma}[\!\cite{HoRaTa-23}]
\label{lem: level set L}
The sublevel set $\mathcal{L}^-$ is convex and contains the ray $\{t(-1,-1):t>0\}$, whereas the superlevel set $\mathcal{L}^+$ includes the rays $\{t(-1,0):t>t_0\}$ and $\{t(0,-1):t>t_0\}$ for some $t_0$ sufficiently large. Additionally, $\mathcal{L}$ admits a tangent at $(0,0)$, which lies outside $\mathcal{L}^-$ and satisfies the equation
\begin{equation*}\textstyle
	\left( \sum_{k,\ell}k p_{k,\ell} \right) x + \left( \sum_{k,\ell}\ell p_{k,\ell} \right) y = 0.
\end{equation*}
\end{lemma}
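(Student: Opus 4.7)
My plan is to exploit the convexity and smoothness of $L$ as a convergent sum of exponentials. Each map $(x,y) \mapsto e^{kx+\ell y}$ is convex on $\mathbb{R}^2$, so $L(x,y) = \sum_{-1\leq k,\ell} p_{k,\ell}\, e^{kx+\ell y} - 1$ is a non-negative combination of convex functions and hence convex. Its sublevel set $\mathcal{L}^- = \{L < 0\}$ is therefore convex as a sublevel set of a convex function. To place the ray $\{t(-1,-1): t > 0\}$ inside $\mathcal{L}^-$, I would compute
\[
L(-t,-t) = \sum_{-1\leq k,\ell} p_{k,\ell}\, e^{-t(k+\ell)} - 1.
\]
Among indices $k,\ell \geq -1$, the only pairs with $k+\ell < 0$ are $(-1,-1)$, $(-1,0)$, $(0,-1)$, all of which vanish by Assumption~\ref{asm: B3}. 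Hence every surviving term has $k+\ell \geq 0$, and by Assumption~\ref{asm: B5} at least one term has $k+\ell > 0$ with positive mass, so $L(-t,-t) < \sum p_{k,\ell} - 1 = 0$ whenever $t > 0$.

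For the two rays in $\mathcal{L}^+$, I would drop all but one positive term: keeping only $(k,\ell) = (-1,1)$ gives $L(-t,0) \geq p_{-1,1}\, e^{t} - 1$, which exceeds $0$ once $t > \log(1/p_{-1,1})$; recall $p_{-1,1} > 0$ from Assumption~\ref{asm: B4}. The symmetric argument using $p_{1,-1} > 0$ applied to $L(0,-t)$ treats the vertical ray, and a single $t_0$ handles both.

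For the tangent at $(0,0)$: this point lies on $\mathcal{L}$ because $L(0,0) = \sum p_{k,\ell} - 1 = 0$, and Assumption~\ref{asm: B6} makes $L$ real-analytic on a neighborhood of $(0,0)$, so I can differentiate termwise to get
\[
\nabla L(0,0) = \Bigl( \textstyle\sum_{k,\ell} k\, p_{k,\ell},\ \sum_{k,\ell} \ell\, p_{k,\ell} \Bigr).
\]
I would then verify that this gradient is non-zero: since $k+\ell \geq 0$ on every atom of $\{p_{k,\ell}\}$ (by \ref{asm: B3}) and $k+\ell > 0$ on at least one atom of positive mass (by \ref{asm: B5}), one has $\sum (k+\ell) p_{k,\ell} > 0$, forcing at least one component of $\nabla L(0,0)$ to be positive. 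The implicit function theorem then yields a smooth tangent line to $\mathcal{L}$ at $(0,0)$ with the announced equation. That this tangent lies outside $\mathcal{L}^-$ is immediate from convexity: the supporting-hyperplane inequality $L(x,y) \geq L(0,0) + \nabla L(0,0) \cdot (x,y)$ shows that every $(x,y)$ on the tangent satisfies $L(x,y) \geq 0$.

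The only delicate point, as far as there is one, is justifying the termwise differentiation of the series defining $L$ at $(0,0)$, which follows from the moment bound in Assumption~\ref{asm: B6} together with dominated convergence. All remaining steps are direct convexity arguments or elementary exponential estimates.
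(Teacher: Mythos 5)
Your proof is correct. Note that the paper does not reprove this lemma but simply points to Lemma~1 of \cite{HoRaTa-23}; nonetheless, your argument is the natural one and almost certainly matches that reference's strategy. The key steps all check out: (i) $L$ is a non-negative combination of the convex functions $(x,y)\mapsto e^{kx+\ell y}$, so $L$ is convex and $\mathcal{L}^-$ is a convex sublevel set; (ii) on the diagonal ray, \ref{asm: B3} eliminates the three index pairs with $k+\ell<0$, and \ref{asm: B5} provides a strictly decaying term, giving $L(-t,-t)<0$; (iii) on the two axis rays, the single positive weight guaranteed by \ref{asm: B4} dominates and gives an exponentially growing lower bound; (iv) the gradient at the origin is nonzero because $\sum(k+\ell)p_{k,\ell}>0$, so the implicit function theorem applies, and the first-order convexity inequality $L(x,y)\geq \nabla L(0,0)\cdot(x,y)$ shows the tangent line avoids $\mathcal{L}^-$.

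One small point worth tightening: for the convexity and tangent claims you implicitly use that $\mathcal{L}^-$ is contained in the open convex domain where the series for $L$ converges (a point outside that domain has $L=+\infty$, hence is not in $\mathcal{L}^-$), so restricting convexity considerations to that domain is harmless; it is worth a sentence. Also, the termwise-differentiation justification is more cleanly stated via local uniform convergence of the exponential series on the open domain (Weierstrass), rather than dominated convergence, though both are fine. These are cosmetic; the proof is sound and self-contained, which is arguably more informative than the paper's bare citation.
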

Lemma~\ref{lem: level set L} is stated and proved in \cite{HoRaTa-23} as Lemma~1. 
It entails that the line $\{t(1,1):t\in\mathbb{R}\}$ divides the curve $\mathcal{L}$ into two parts, which can be parametrized as follows. Let us define
\begin{itemize}
    \item $\gamma:(-\infty,0]\rightarrow \mathbb{R}$ such that $\gamma(0)=0$, $L(x,\gamma(x)) =0$ and $\gamma(x) > x$ for any $x< 0$;
    \item $\zeta:(-\infty,0]\rightarrow \mathbb{R}$ such that $\zeta(0)=0$, $L(\zeta(y),y) =0$ and $\zeta(y)> y$ for any $y< 0$.
\end{itemize}
Both $\gamma$ and $\zeta$ are well-defined functions on $(-\infty,0]$, as will be shown in Lemma~\ref{lem: parametrization level set L} below. The curve $\mathcal{L}$ in \eqref{eq: level set L} can then be described as
\begin{equation*}
	\mathcal{L}= \{(x,\gamma(x)):x\leq 0\} \cup \{(\zeta(y),y):y\leq 0\}. 
\end{equation*}
The following result, which can be found as Lemma~2 in \cite{HoRaTa-23}, gives some key properties of $\gamma$ and $\zeta$.
\begin{lemma}[\!\!{\cite{HoRaTa-23}}]
\label{lem: parametrization level set L}
The following assertions hold:
\begin{enumerate}[label=(\roman*)]
    \item\label{item: gamma zeta well defined} On $(-\infty,0]$, the functions $\gamma$ and $\zeta$ are well defined, concave and infinitely differentiable. 
    \item\label{item: gamma monotonic} The function $\gamma$ has a unique maximizer $\widehat{x}<0$, with corresponding value 
    \begin{equation*}
    \widehat{y}=\max_{x\leq 0} \gamma (x) = \gamma (\widehat x)>0.
    \end{equation*} Moreover, $\gamma$ is strictly increasing on $(-\infty,\widehat{x}]$, with $\lim_{x\to -\infty}\gamma(x)=-\infty$, and strictly decreasing on $[\widehat{x},0]$.
    \item\label{item: zeta monotonic} Similarly, $\zeta$ has a unique maximizer $\widetilde{y}<0$, with corresponding value 
    \begin{equation*}
    \widetilde{x}=\max_{y\leq 0} \zeta (y) =\zeta(\widetilde{y})>0.
    \end{equation*}
    Moreover, $\zeta$ is strictly increasing on $(-\infty,\widetilde{y}]$ with $\lim_{y\to -\infty}\zeta(y)=-\infty$, and strictly decreasing on $[\widetilde{y},0]$.
\end{enumerate}
\end{lemma}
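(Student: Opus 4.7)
The lemma is cited from \cite{HoRaTa-23}; the plan is to sketch the key ideas. The driving engine is that $L$ is a strictly convex, infinitely differentiable function on its (open) domain of finiteness, which by assumption~\ref{asm: B6} contains a neighborhood of $\mathcal{L}$. Combined with Lemma~\ref{lem: level set L}, this convexity alone is enough to extract all the claimed properties of $\gamma$ and $\zeta$ via the implicit function theorem.

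For item \ref{item: gamma zeta well defined}, I would fix $x\leq 0$ and analyse the section $y\mapsto L(x,y)$. It is strictly convex; it blows up as $y\to-\infty$ owing to \ref{asm: B2} together with \ref{asm: B4} (which force non-trivial jumps with $\ell=-1$), and as $y\to+\infty$ owing to the existence of jumps with $\ell\geq 1$ coming from \ref{asm: B4}--\ref{asm: B5}. Hence $\{y:L(x,y)\leq 0\}$ is a compact interval, and for $x<0$ it has non-empty interior since the ray $\{t(-1,-1):t>0\}\subset\mathcal{L}^-$ by Lemma~\ref{lem: level set L}. I would define $\gamma(x)$ as the upper endpoint of this interval; then $\gamma(x)>x$, and strict convexity of $L(x,\cdot)$ gives $\partial_y L(x,\gamma(x))>0$, so the implicit function theorem yields $\gamma\in C^\infty$ on $(-\infty,0)$, continuously extending to $\gamma(0)=0$. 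Concavity of $\gamma$ is the graphical form of convexity of $\mathcal{L}^-$; equivalently, differentiating $L(x,\gamma(x))=0$ twice and using $\partial_y L>0$ together with convexity of $L$ gives $\gamma''\leq 0$.

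For items \ref{item: gamma monotonic}--\ref{item: zeta monotonic}, the key input is the tangent to $\mathcal{L}$ at $(0,0)$ provided by Lemma~\ref{lem: level set L}, with equation $\mu_1 x+\mu_2 y=0$ where $\mu_1=\sum_{k,\ell} k\, p_{k,\ell}$ and $\mu_2=\sum_{k,\ell}\ell\, p_{k,\ell}$. Since this tangent lies outside the convex set $\mathcal{L}^-$, which contains the ray $\{t(-1,-1):t>0\}$, its outward normal must point into the open first quadrant; this forces $\mu_1,\mu_2>0$, so that $\gamma'(0)=-\mu_1/\mu_2<0$. A concave function on $(-\infty,0]$ whose derivative at the right endpoint is strictly negative must attain its maximum at a unique interior point $\widehat{x}<0$, with $\widehat{y}=\gamma(\widehat{x})>\gamma(0)=0$; strict monotonicity on each side of $\widehat{x}$ follows from strict concavity (itself guaranteed by strict convexity of $L$, which precludes $\gamma$ being affine on any subinterval). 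The limit $\gamma(x)\to-\infty$ as $x\to-\infty$ follows from the fact that $\mathcal{L}^+$ contains the half-ray $\{t(-1,0):t>t_0\}$ of Lemma~\ref{lem: level set L}: for any fixed $M\in\mathbb{R}$ the point $(x,M)$ lies in $\mathcal{L}^+$ for $x$ sufficiently negative, so $\gamma(x)<M$ eventually. The statements for $\zeta$ are obtained by exchanging the roles of the two coordinates.

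The main obstacle I anticipate is the transfer of the geometric information of Lemma~\ref{lem: level set L} (convexity of $\mathcal{L}^-$, ray inclusions, and tangent direction at the origin) into the quantitative assertions $\widehat{x}<0$ and $\widehat{y}>0$. This amounts to simultaneously using the local convexity of $L$ at $(0,0)$ and the global inclusion of the $(-1,-1)$-ray in $\mathcal{L}^-$ to rule out degenerate configurations; once both drifts are known to be strictly positive, the rest is routine concave-function analysis.
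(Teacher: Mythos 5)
The paper does not prove this lemma itself; it is imported verbatim as Lemma~2 of \cite{HoRaTa-23}, so there is no internal proof to compare against. Evaluating your sketch on its own merits, the overall architecture — smoothness of $\gamma,\zeta$ via the implicit function theorem using $\partial_yL\neq0$ on the relevant branch, concavity of $\gamma$ inherited from convexity of $\mathcal L^-$, the tangent slope $\gamma'(0)=-\mu_1/\mu_2$, and the limit $\gamma(x)\to-\infty$ from the $(-1,0)$-ray in $\mathcal L^+$ — is the right one and matches how such statements are established.

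However, there is a genuine gap at the pivotal step. You write that since the tangent $\mu_1 x+\mu_2 y=0$ lies outside the convex set $\mathcal L^-$, which contains $\{t(-1,-1):t>0\}$, ``its outward normal must point into the open first quadrant; this forces $\mu_1,\mu_2>0$.'' The separating-hyperplane inequality evaluated on that ray only yields $-\mu_1 t-\mu_2 t<0$, i.e.\ $\mu_1+\mu_2>0$. Nothing in Lemma~\ref{lem: level set L} rules out, say, $\mu_1<0<\mu_2$ with $\mu_1+\mu_2>0$: the $(-1,-1)$-ray inclusion, the $(-1,0)$- and $(0,-1)$-ray inclusions in $\mathcal L^+$, and the outward tangency at the origin are all compatible with such sign patterns. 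And in that case $\gamma'(0)=-\mu_1/\mu_2>0$, so by concavity $\gamma$ is increasing on all of $(-\infty,0]$, its maximum is attained at $x=0$ with value $0$, and the conclusion $\widehat y>0$ of item~\ref{item: gamma monotonic} fails outright. The strict positivity of both drift components is therefore a genuine hypothesis, not a corollary of the level-set geometry; in this paper it appears explicitly as \ref{asm: A4} for the first model and is stated as a standing assumption in the abstract (``assuming the drift \dots\ lies within the cone''), and it is under that hypothesis that \cite{HoRaTa-23} proves the lemma. Your proof should invoke this hypothesis rather than attempt to derive it.

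A smaller logical reordering issue: a concave function on $(-\infty,0]$ with $\gamma(0)=0$ and $\gamma'(0)<0$ need not attain an interior maximum — e.g.\ $\gamma(x)=1-e^{x}$ is strictly concave with $\gamma'(0)=-1$ yet is strictly decreasing, with supremum $1$ not attained. You must first establish $\gamma(x)\to-\infty$ (which you do, correctly, from the ray in $\mathcal L^+$ together with monotone convergence of the concave branch) and only then combine this with $\gamma'(0)<0$ and strict concavity to conclude the existence and uniqueness of the interior maximizer $\widehat x<0$ and the value $\widehat y>0$.
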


With the help of the functions $\gamma$ and $\zeta$ introduced above, we are now ready to parametrize the curve $\mathcal{K}$ defined in \eqref{eq:level_set_K} and to examine some of its properties. For $\alpha, \beta \in [0,1]$, we define the functions
\begin{equation}
\label{eq: u(alpha) v(beta) definition}
u(\alpha) = \log \gamma ( \log \alpha) \quad \text{and} \quad v(\beta) = \log \zeta ( \log \beta).
\end{equation}
These functions provide a parametrization of the curve $\mathcal{K}$, which can be expressed as
\begin{equation*}
\mathcal{K}= \{(\alpha,u(\alpha)):\alpha\in [0,1]\} \cup \{(v(\beta),\beta):\beta\in [0,1]\}.
\end{equation*}
The following lemma outlines some key properties of the functions $u$ and $v$ in \eqref{eq: u(alpha) v(beta) definition}.

\begin{lemma}\label{lem: parametrization level set K}
	The following assertions hold:
	\begin{enumerate}[label=(\roman*)]
	\item $u(1)=v(1)=1$, $u(0)=v(0)=0$, $u(\alpha)> \alpha$ and $v(\beta)> \beta$ for any $\alpha,\beta \in (0,1)$; 
\item $u(\alpha)$ and $v(\beta)$ are infinitely differentiable on $(0,1)$;
\item $u$ has a unique maximizer $\widehat{\alpha}\in (0,1)$, with corresponding value 
\begin{equation*}
 \widehat{\beta}=\max_{\alpha\in [0,1]} u (\alpha) = u(\widehat{\alpha}) >0  . 
\end{equation*}
Moreover, $u$ is strictly increasing on $[0,\widehat{\alpha}]$ and strictly decreasing on $[\widehat{\alpha},1]$;
\item $v$ has a unique maximizer $\widetilde{\beta} \in (0,1)$, with corresponding value 
\begin{equation*}
\widetilde{\alpha}=\max_{\beta\in [0,1]} v (\beta) = v(\widetilde{\beta}) >0.    
\end{equation*}
Moreover, $v$ is strictly increasing on $[0,\widetilde{\beta}]$ and strictly decreasing on $[\widetilde{\beta},1]$.
\end{enumerate}
\end{lemma}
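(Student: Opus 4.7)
The plan is to transfer each of the four assertions directly from Lemma~\ref{lem: parametrization level set L} via the coordinatewise substitution $(\alpha,\beta) = (e^x, e^y)$, which sends $\mathcal{L}$ bijectively onto $\mathcal{K}$. Since $u$ and $v$ parametrize $\mathcal{K}$ (as stated immediately after \eqref{eq: u(alpha) v(beta) definition}), they are related to $\gamma$ and $\zeta$ by the identities obtained through the strictly increasing, $C^\infty$ substitutions $\alpha = e^x$ and $\beta = e^y$. As both the logarithm and exponential are $C^\infty$ diffeomorphisms between $(-\infty, 0]$ and $(0, 1]$, every feature of $\gamma$ and $\zeta$ (smoothness, location of maxima, monotonicity intervals, boundary behavior) transports directly to $u$ and $v$.

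For assertion (i), the point $(1,1) \in \mathcal{K}$ corresponds to $(0,0) \in \mathcal{L}$, yielding $u(1) = v(1) = 1$. The limits $u(0) = v(0) = 0$ follow from $\lim_{x\to -\infty}\gamma(x) = \lim_{y\to -\infty}\zeta(y) = -\infty$, using Lemma~\ref{lem: parametrization level set L}\ref{item: gamma monotonic}--\ref{item: zeta monotonic} and composition with the exponential. The inequality $u(\alpha) > \alpha$ for $\alpha \in (0,1)$ translates, via the strictly increasing logarithm, into $\gamma(\log\alpha) > \log\alpha$, which is precisely the defining inequality of $\gamma$ at the negative argument $\log\alpha$; the inequality for $v$ is handled identically.

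For assertion (ii), $u$ is $C^\infty$ on $(0,1)$ as a composition of $C^\infty$ maps, using the smoothness of $\gamma$ from Lemma~\ref{lem: parametrization level set L}\ref{item: gamma zeta well defined}, and likewise for $v$. For (iii), I set $\widehat{\alpha} = e^{\widehat{x}}$ where $\widehat{x} \in (-\infty, 0)$ is the unique maximizer of $\gamma$ given by Lemma~\ref{lem: parametrization level set L}\ref{item: gamma monotonic}; then $\widehat{\alpha} \in (0,1)$ is the unique maximizer of $u$, with value $\widehat{\beta} = e^{\widehat{y}} > 1 > 0$, and strict monotonicity on $[0, \widehat{\alpha}]$ and $[\widehat{\alpha}, 1]$ is inherited from that of $\gamma$ on $(-\infty, \widehat{x}]$ and $[\widehat{x}, 0]$, since composition with strictly increasing bijections preserves strict monotonicity. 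Assertion (iv) is obtained symmetrically using Lemma~\ref{lem: parametrization level set L}\ref{item: zeta monotonic}.

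No real obstacle arises here; the proof is a change-of-variables bookkeeping exercise. The only mild subtlety is that $u(0)$ and $v(0)$ must be understood as continuous extensions by limits, which are well defined thanks to the divergence of $\gamma$ and $\zeta$ at $-\infty$ provided by Lemma~\ref{lem: parametrization level set L}.
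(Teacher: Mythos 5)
Your proof is correct and is exactly the argument the paper has in mind when it writes that the lemma ``follows directly from'' the definitions and Lemma~\ref{lem: parametrization level set L} and ``therefore omit[s] the proof'' --- a bookkeeping transfer of smoothness, boundary values, and monotonicity intervals through the coordinatewise exponential change of variables. One remark worth making explicit: the paper's displayed formula in \eqref{eq: u(alpha) v(beta) definition}, $u(\alpha)=\log\gamma(\log\alpha)$, is evidently a typo (it would give $u(1)=\log\gamma(0)=\log 0$, undefined, contradicting the assertion $u(1)=1$); the relation consistent with the parametrization $(\alpha,\beta)=(e^{x},e^{y})$ of $\mathcal{K}$ is $u(\alpha)=\exp\bigl(\gamma(\log\alpha)\bigr)$ and $v(\beta)=\exp\bigl(\zeta(\log\beta)\bigr)$, which is what you tacitly (and correctly) use when you pass from $u(\alpha)>\alpha$ to $\gamma(\log\alpha)>\log\alpha$ by applying $\log$, and when you obtain $u(0)=0$ from $\gamma(x)\to-\infty$. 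It would strengthen your write-up to state this identity once at the outset, but there is no gap in the argument.
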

The properties of $u$ and $v$ stated in Lemma~\ref{lem: parametrization level set K} follow directly from their definitions in \eqref{eq: u(alpha) v(beta) definition} and from the properties of $\gamma$ and $\zeta$ established in Lemma~\ref{lem: parametrization level set L}. We therefore omit the proof.

We may now construct the sequence $\{(\alpha_m,\beta_m)\}_{m\in\mathbb{Z}}$ introduced at the beginning of the section, in \eqref{eq:evocation_sequence}. Denoting the inverses of $u$ and $v$ restricted to the intervals $[0,\widehat{\alpha}]$ and $[0,\widetilde{\beta}]$ by
\begin{equation}
\label{eq:def_ustar_vstar}
u^* = \bigl( u\big|{[0,\widehat{\alpha}]} \bigr)^{-1} \quad \text{and} \quad
v^* = \bigl( v\big|{[0,\widetilde{\beta}]} \bigr)^{-1},
\end{equation}
we define the sequence $\{(\alpha_m,\beta_m)\}_{m\in\mathbb{Z}}$ on the curve $\mathcal{K}$ recursively as follows:
\begin{equation}\label{eq: alpha_m beta_m definition}
    \begin{cases}
        \alpha_0=\beta_0 = 1,\\
        \alpha_m = u^*(\beta_{m-1}),\quad \beta_m = v^*(\alpha_m),\quad m\geq 1,\\
        \beta_m = v^*(\alpha_{m+1}),\quad \alpha_m = u^*(\beta_m),\quad m\leq -1.
    \end{cases}
\end{equation}



In the following lemma, we compute the decay of $\alpha_m$ and $\beta_m$ as $m\to\pm\infty$.
\begin{lemma}\label{lem: alpha_m beta_m decay}
	The sequences $\{\alpha_m\}_{m\geq 0}$, $\{\alpha_{-m}\}_{m\geq 0}$, $\{\beta_m\}_{m\geq 0}$, and $\{\beta_{-m}\}_{m\geq 0}$, defined in \eqref{eq: alpha_m beta_m definition}, are strictly decreasing and exhibit exponential decay as $m\to\infty$:
\begin{equation*}
\alpha_{\pm m}, \beta_{\pm m} = O\left( \left( \frac{\widehat{\alpha}}{\widehat{\beta}} \cdot \frac{\widetilde{\beta}}{\widetilde{\alpha}} \right)^{|m|} \right),
\end{equation*}
where $\frac{\widehat{\alpha}}{\widehat{\beta}} \cdot \frac{\widetilde{\beta}}{\widetilde{\alpha}} < 1$, and the constants $\widehat{\alpha}, \widehat{\beta}, \widetilde{\beta}, \widetilde{\alpha}$ are as defined in Lemma~\ref{lem: parametrization level set K}.
\end{lemma}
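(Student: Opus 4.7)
The plan is to deduce the claim from two pointwise bounds on the parametrizing functions: for all $\beta\in[0,\widehat\beta]$ and $\alpha\in[0,\widetilde\alpha]$,
\begin{equation*}
u^*(\beta) \leq \frac{\widehat\alpha}{\widehat\beta}\,\beta \quad\text{and}\quad v^*(\alpha) \leq \frac{\widetilde\beta}{\widetilde\alpha}\,\alpha.
\end{equation*}
If these hold, then \eqref{eq: alpha_m beta_m definition} yields, for $m\geq 1$, $\alpha_{m+1} = u^*(\beta_m) \leq (\widehat\alpha/\widehat\beta)\beta_m$ and $\beta_{m+1} = v^*(\alpha_{m+1}) \leq (\widetilde\beta/\widetilde\alpha)\alpha_{m+1}$, so $\beta_{m+1} \leq r\beta_m$ with $r := (\widehat\alpha\widetilde\beta)/(\widehat\beta\widetilde\alpha)$. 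Induction from $\beta_0=1$ then gives $\beta_m \leq r^m$ and $\alpha_m = O(r^m)$. The strict inequality $r<1$ follows from $\widehat\alpha<1<\widehat\beta$ and $\widetilde\beta<1<\widetilde\alpha$, themselves immediate from the interior-maximizer statements in Lemma~\ref{lem: parametrization level set K}. The negative-index case is handled identically by applying the bounds to the reversed recursion, starting from $\alpha_0=1$.

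The key input is the first pointwise bound, which I would prove in logarithmic coordinates. Writing $x=\log\alpha$, $\widehat x=\log\widehat\alpha$, $\widehat y=\log\widehat\beta$, the claim becomes $g(x) := \gamma(x) - x \geq \widehat y - \widehat x$ for $x\in(-\infty,\widehat x]$. By Lemma~\ref{lem: parametrization level set L}, $\gamma$ is concave on $(-\infty,0]$; hence $\gamma'$ is non-increasing and admits a limit $c := \lim_{x\to-\infty}\gamma'(x)\in\mathbb{R}\cup\{+\infty\}$. The decisive step is to show $c \leq 1$: since $\gamma(x)\to-\infty$ as $x\to-\infty$, L'H\^opital's rule gives $\gamma(x)/x \to c$; were $c>1$, this would force $\gamma(x) < x$ for all sufficiently negative $x$, contradicting the defining property $\gamma(x) > x$ for $x<0$. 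Combined with the monotonicity of $\gamma'$, this yields $\gamma'(x) \leq c \leq 1$ throughout $(-\infty,0]$, so $g' = \gamma'-1 \leq 0$ and $g$ is non-increasing on $(-\infty,\widehat x]$. Therefore $g(x) \geq g(\widehat x) = \widehat y - \widehat x$, and exponentiating recovers the stated bound on $u^*$. The symmetric bound on $v^*$ follows by the same argument applied to $\zeta$.

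The strict monotonicity of the sequences is independent of the decay estimate and follows directly from the shape of $\mathcal K$: for $m\geq 0$, $(\alpha_m,\beta_m)$ lies on the $v$-branch so $\alpha_m = v(\beta_m) \geq \beta_m$, while $(\alpha_{m+1},\beta_m)$ lies on the $u$-branch so $\beta_m = u(\alpha_{m+1}) \geq \alpha_{m+1}$, with both inequalities strict once $\beta_m\in(0,1)$; chaining them gives $\alpha_{m+1} < \beta_m < \alpha_m$, and an analogous chain yields $\beta_{m+1} < \alpha_{m+1} < \beta_m$. The transition $m=0\to m=1$ is strict by the intermediate value theorem applied to $u$ on $[0,\widehat\alpha]$, and the negative-index case is symmetric. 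The main obstacle is the inequality $c\leq 1$, the single step that goes beyond pure concavity: it rules out the pathology that $\gamma$ could escape to $-\infty$ strictly faster than the diagonal, and it is precisely what converts the qualitative concave structure into the quantitative contraction rate $r$.
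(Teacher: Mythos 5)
Your proof is correct and, at the level of the overall architecture, mirrors the paper's: both reduce the decay claim to the pointwise linear bounds $u^*(\beta)\leq(\widehat\alpha/\widehat\beta)\beta$ on $[0,\widehat\beta]$ and $v^*(\alpha)\leq(\widetilde\beta/\widetilde\alpha)\alpha$ on $[0,\widetilde\alpha]$, then iterate; the paper packages the iteration as an explicit comparison sequence $\{(\alpha'_m,\beta'_m)\}_{m\in\mathbb{Z}}$ satisfying the corresponding recursion with equality, while you fold the same estimate into a direct induction on $\beta_m$, which is cosmetic. Where you genuinely diverge is in how the pointwise bound is proved. The paper argues geometrically in $\mathbb{R}^2$: since $\mathcal{L}^-$ is convex and contains the ray $\{t(-1,-1):t>0\}$ (Lemma~\ref{lem: level set L}), a recession-cone argument shows it also contains the translates $\{(\widehat x,\widehat y)+t(-1,-1):t>0\}$ and $\{(\widetilde x,\widetilde y)+t(-1,-1):t>0\}$, and exponentiating these gives the bounds. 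You argue analytically in one log-coordinate: concavity of $\gamma$ makes $\gamma'$ non-increasing with a limit $c$ as $x\to-\infty$; L'H\^opital together with the defining inequality $\gamma(x)>x$ for $x<0$ forces $c\leq 1$; hence $\gamma'\leq 1$ everywhere, $\gamma(x)-x$ is non-increasing, and the bound follows by comparison with its value at $\widehat x$. The two routes exploit the same convexity: your ``$c\leq 1$'' is exactly the statement that the recession direction of $\mathcal{L}^-$ has slope one. Your version makes explicit a derivative estimate that the paper leaves implicit in the geometry, at the cost of the L'H\^opital step; the paper's version avoids any asymptotic-slope analysis but relies on the two-dimensional recession-cone fact. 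Your monotonicity and $r<1$ arguments match the paper's. One small nit: for the base step $\alpha_0\to\alpha_1$, invoking the intermediate value theorem is slightly indirect — $\alpha_1=u^*(1)<u^*(\widehat\beta)=\widehat\alpha<1$ follows immediately from $\widehat\beta>1$ and the strict monotonicity of $u^*$, which is the route the paper takes.
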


\begin{proof}
	Since for all $m\geq 1$,
    \begin{equation*}
        \beta_0 = \alpha_0 > \widehat{\alpha} > \alpha_1 \quad\text{and}\quad
        \alpha_m > v^*(\alpha_m) = \beta_m > u^*(\beta_m) = \alpha_{m+1},
    \end{equation*}
then $\{\alpha_m\}_{m\geq 0}$ and $\{\beta_m\}_{m\geq 0}$ are strictly decreasing. An analogous reasoning yields the same result for $\{\alpha_{-m}\}_{m\geq 0}$ and $\{\beta_{-m}\}_{m\geq 0}$.

We move to analyzing the decay of $\alpha_m$ and $\beta_m$ as $m\to\pm\infty$. The idea is to compare them to an alternative sequence $\{ (\alpha'_m, \beta'_m) \}_{m\in\mathbb{Z}}$, which is defined recursively by
\begin{equation*}
    \begin{cases}
        \alpha'_0 = \beta'_0 = 1,\\
        \displaystyle \alpha'_m = \frac{\widehat{\alpha}}{\widehat{\beta}} \beta'_{m-1},\quad \beta'_m = \frac{\widetilde{\beta}}{\widetilde{\alpha}} \alpha'_m, &m\geq 1,\\
        \displaystyle\beta'_m = \frac{\widetilde{\beta}}{\widetilde{\alpha}} \alpha'_{m+1} ,\quad \alpha'_m = \frac{\widehat{\alpha}}{\widehat{\beta}} \beta'_m, &m\leq -1.
    \end{cases}
\end{equation*}

Since the sublevel set $\mathcal{L}^-$ is convex and contains the ray $\{t(1,1):t<0\}$ (Lemma~\ref{lem: level set L}), it must also contain the rays $\{(\widehat{x},\widehat{y})+t(1,1):t<0\}$ and $\{(\widetilde{x},\widetilde{y})+t(1,1):t<0\}$, where $\widehat{x}$ and $ \widehat{y}$ are respectively the maximizer and maximal value of $\gamma$, while $\widetilde{y}$ and $ \widetilde{x}$ are respectively the maximizer and maximal value of $\zeta$, as defined in Lemma~\ref{lem: parametrization level set L}. Consequently, the sublevel set $\mathcal{K}^-=\{(\alpha,\beta)\in[0,\infty)^2:K(\alpha,\beta)<0\}$ contains the segments $\{t(\widehat{\alpha},\widehat{\beta}):t\in(0,1)\}$ and $\{t(\widetilde{\alpha},\widetilde{\beta}):t\in(0,1)\}$. As a result, we have:
\begin{equation*}
	u(\alpha) \geq \frac{\widehat{\beta}}{\widehat{\alpha}} \alpha,\quad \alpha \in [0,\widehat{\alpha}],\quad \text{or equivalently,}\quad 
	u^* (\beta) \leq \frac{\widehat{\alpha}}{\widehat{\beta}} \beta,\quad \beta\in[0,\widehat{\beta}],
\end{equation*}
and
\begin{equation*}
	v(\beta) \geq \frac{\widetilde{\alpha}}{\widetilde{\beta}} \beta,\quad \beta \in [0,\widetilde{\beta}],\quad \text{or equivalently,}\quad 
	v^* (\alpha) \leq \frac{\widetilde{\beta}}{\widetilde{\alpha}} \alpha,\quad \alpha\in[0,\widetilde{\alpha}].
\end{equation*}
These inequalities enable us to compare $\alpha_m$ to $\alpha'_m$, and $\beta_m$ to $\beta'_m$. We first have:
\begin{equation*}
	\alpha_1 = u^* (\beta_0) \leq \frac{\widehat{\alpha}}{\widehat{\beta}}= \alpha'_1\quad
	\text{and}
	\quad
	\beta_1 = v^* (\alpha_1) \leq v^* (\alpha'_1) \leq \frac{\widetilde{\beta}}{\widetilde{\alpha}} \alpha'_1 = \beta'_1.
\end{equation*}
By an induction argument, we then have for any $m\geq 2$:
\begin{align*}
	&\alpha_m = u^*(\beta_{m-1}) \leq u^*(\beta'_{m-1}) \leq \frac{\widehat{\alpha}}{\widehat{\beta}}\beta'_{m-1} = \alpha'_{m},\\
	&\beta_m = v^* (\alpha_m) \leq v^* (\alpha'_m) \leq \frac{\widetilde{\beta}}{\widetilde{\alpha}}\alpha'_m= \beta'_m.
\end{align*}
Similar arguments imply for any $m\leq -1$.
\begin{equation*}
	\alpha_{m} \leq \widehat{\alpha}_{m}\quad
	\text{and}\quad
	\beta_{m} \leq \widehat{\beta}_{m}.
\end{equation*}
Now since the sequence $\{(\alpha'_m,\beta'_m)\}_{m\in\mathbb{Z}}$ can be explicitly expressed by
\begin{equation*}
    \begin{cases}
        \alpha'_0 = \beta'_0 =1,\\
        \displaystyle\frac{\widetilde{\beta}}{\widetilde{\alpha}} \alpha'_m = \beta'_m = \left( \frac{\widehat{\alpha}}{\widehat{\beta}} \cdot\frac{\widetilde{\beta}}{\widetilde{\alpha}} \right)^m , &m\geq 1,\\ \displaystyle
         \alpha'_m = \frac{\widehat{\alpha}}{\widehat{\beta}} \beta'_m = \left( \frac{\widehat{\alpha}}{\widehat{\beta}} \cdot\frac{\widetilde{\beta}}{\widetilde{\alpha}} \right)^{-m} , &m\leq -1,
    \end{cases}
\end{equation*}
it suffices to derive the decay of $\alpha_m$ and $\beta_m$ as $m\to\pm \infty$. The proof is then complete.
\end{proof}

\begin{lemma}\label{lem: G(i,j,z) convergence}
	For any $(i,j)\in\mathbb{N}_0^2\setminus\{(0,0)\}$, the series of functions $G(i,j,z)$ in \eqref{eq:def_function_G} is analytic on the open unit disk, and satisfies Equations~\eqref{eq: recurrence relation F(i,j,z)}--\eqref{eq: terminal condition F(i,j,1)}.
\end{lemma}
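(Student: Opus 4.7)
The lemma requires checking three things: (i) absolute and uniform convergence of the series on compact subsets of the open unit disk, yielding analyticity; (ii) the interior recurrence \eqref{eq: recurrence relation F(i,j,z)} together with the boundary conditions \eqref{eq: horizontal condition F(i,j,z)}--\eqref{eq: vertical condition F(i,j,z)}; and (iii) the terminal value $G(i,j,1)=1$.

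For (i), Lemma~\ref{lem: alpha_m beta_m decay} provides exponential decay $\alpha_m^i\beta_m^j,\ \alpha_{m+1}^i\beta_m^j=O(\sigma^{|m|})$ with $\sigma<1$ as soon as $(i,j)\neq(0,0)$. It then remains to show that $c_m(z),\,d_m(z)$ stay uniformly bounded for $z$ in a compact subset of the open unit disk. I would write each $c_m(z),d_m(z)$ as a telescoping product using \eqref{eq: c_m d_m definition}; every factor has the form $(1-zA_m)/(1-zB_m)$ with $A_m,B_m$ uniformly bounded by the first-moment assumption \ref{asm: B8}, and the difference $|A_m-B_m|$ decaying geometrically in $|m|$ because $(\alpha_m,\beta_m)$ is exponentially close to both $(\alpha_{m+1},\beta_m)$ and $(\alpha_m,\beta_{m-1})$ by Lemma~\ref{lem: alpha_m beta_m decay}. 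As long as the denominators stay bounded away from $0$ on the compact set, the infinite product then converges absolutely, and the series for $G$ converges geometrically.

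Step (ii) is algebraic. Both $(\alpha_m,\beta_m)$ and $(\alpha_{m+1},\beta_m)$ lie on the kernel curve $\mathcal{K}$ by \eqref{eq: alpha_m beta_m definition} (from $v(\beta_m)=\alpha_m$ and $u(\alpha_{m+1})=\beta_m$), so dividing the kernel identity $\alpha\beta=\sum p_{k,\ell}\alpha^{k+1}\beta^{\ell+1}$ by $\alpha\beta$ shows that every monomial in the series satisfies \eqref{eq: recurrence relation F(i,j,z)}; absolute convergence transfers this to $G$. For the horizontal condition, setting $j=0$ and reindexing $m\mapsto m-1$ in the $d_{m+1}$-sum gives $G(i,0,z)=\sum_m(c_m(z)+d_m(z))\alpha_m^i$; inserting this and the series for $G(i+k,\ell,z)$ into \eqref{eq: horizontal condition F(i,j,z)} and identifying coefficients of $\alpha_m^i$ yields precisely the third line of \eqref{eq: c_m d_m definition}, which is built into the definition. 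The vertical condition follows symmetrically from the second line of \eqref{eq: c_m d_m definition}.

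For (iii), at $z=1$ the normalization $\sum v_{k,\ell}=1$ combined with $\alpha_0=\beta_0=1$ forces the numerator in the formula for $d_1(z)$ to vanish, so $d_1(1)=0$; this cascades through the recursion to give $c_m(1)=d_m(1)=0$ for every $m\geq 1$. Inverting the third line of \eqref{eq: c_m d_m definition} at $m=0$ shows that its denominator vanishes at $z=1$ while $c_0(1)=1$ is finite, forcing $d_0(1)=0$, and the same cascade yields $c_m(1)=d_m(1)=0$ for all $m\leq-1$. Only the term $c_0(1)\alpha_0^i\beta_0^j=1$ survives, giving $G(i,j,1)=1$. The \textbf{main obstacle} is (i): one has to control the denominators $1-zA_m$ uniformly away from $0$, whereas some such denominator (for instance the $m=0$ horizontal one) degenerates as $z\to 1$. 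This is precisely why convergence is restricted to compact subsets of the \emph{open} disk, while the boundary point $z=1$ is handled separately via the degeneration described in (iii).
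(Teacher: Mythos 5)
Your argument is correct and follows essentially the same route as the paper: exponential decay of $\alpha_m^i\beta_m^j$ (requiring $(i,j)\neq(0,0)$), a bound on the partial products $c_m(z),d_m(z)$ coming from the decay of $\widehat{v}_m,\widetilde{v}_m,\widehat{h}_m,\widetilde{h}_m$, term-by-term verification of the kernel and boundary equations via the compensation structure, and the cascade $c_m(1)=d_m(1)=0$ for $m\neq 0$ driven by $\widehat{v}_0=\widehat{h}_0=1$. The only (cosmetic) differences are that you claim full absolute convergence of the infinite product to get a uniform bound on $c_m(z)$, whereas the paper is content with the weaker geometric bound $c_m(z)=O(((1+\varepsilon)/(1-\varepsilon))^m)$ absorbed by the decay of $\alpha_m^i\beta_m^j$, and you verify the boundary conditions by matching coefficients of $\alpha_m^i$ (resp.\ $\beta_m^j$) rather than the paper's grouping into consecutive pairs $H_m$ and $V_m$ — both mechanisms encode the same algebra from \eqref{eq: c_m d_m definition}.
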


\begin{proof}
For clarity, we denote, for any $m \in \mathbb{Z}$,
\begin{equation}
\label{eq:def_v_h_tilde_hat}
    \left\{\begin{array}{rclrcl}
        \widehat{v}_m &=& \sum_{k,\ell} v_{k,\ell} \alpha_m^k \beta_{m}^\ell,\qquad 
	\widetilde{v}_m &=& \sum_{k,\ell} v_{k,\ell} \alpha_m^k \beta_{m-1}^\ell, \smallskip\\
    \widehat{h}_m &=& \sum_{k,\ell} h_{k,\ell} \alpha_m^k \beta_{m}^\ell ,\qquad 
	\widetilde{h}_m &=& \sum_{k,\ell} h_{k,\ell} \alpha_m^k \beta_{m-1}^\ell.
    \end{array}\right.
\end{equation}
Then, the recursive formula \eqref{eq: c_m d_m definition} for the coefficients $c_m(z)$ and $d_m(z)$ becomes:
\begin{equation}
\label{eq:recursive_bis_cd}
	c_0(z) = 1,\quad d_{m+1} (z) = - c_m(z) \frac{1-\widehat{v}_m z}{1-\widetilde{v}_{m+1} z},\quad c_m(z) = -d_m(z) \frac{1-\widetilde{h}_m z}{1-\widehat{h}_m z}.
\end{equation}

We begin by showing that $\sum_{m\geq 1} c_m(z)\alpha_m^i\beta_m^j$ is analytic on the open unit disk.
By direct induction, the coefficients $\{c_m(z)\}_{m\geq 1}$ satisfy the product formula
\begin{equation*}
    c_m(z) = \prod_{k=1}^m \frac{1-\widetilde{h}_kz}{1-\widehat{h}_kz}\frac{1-\widehat{v}_{k-1}z}{1-\widetilde{v}_kz}.
\end{equation*}
If $k$ is large enough, then $\widetilde{h}_k, \widehat{h}_k,\widetilde{v}_k, \widehat{v}_k$ are all in $(0,\varepsilon)$ (recall that $\alpha_m$ and $\beta_m$ decay exponentially as $m \to \pm\infty$, see Lemma~\ref{lem: alpha_m beta_m decay}, and recall our hypothesis \ref{asm: B7}). Hence there exists a constant $C$ such that for all $\vert z\vert \leq 1$ and all $m\geq 1$, 
\begin{equation*}
    c_m(z) \leq C \left(\frac{1+\varepsilon}{1-\varepsilon}\right)^m.
\end{equation*}
Therefore, for any $(i,j)\in\mathbb{N}_0^2\setminus\{(0,0)\}$, the series
$\sum_{m\geq 1} c_m(z)\alpha_m^i\beta_m^j$ is analytic on the disk centered at $0$ with radius $\frac{1 - \varepsilon}{1 + \varepsilon}$.
Since $\varepsilon$ is arbitrary, we conclude that the series is analytic on the open unit disk.

By similar arguments, the series $\sum_{m\leq 0} c_m(z)\alpha_m^i\beta_m^j$ and $\sum_{m\in \mathbb{Z}} d_{m+1}(z)\alpha_{m+1}^i\beta_m^j$ are also analytic on the open unit disk.

We now show that $G(i,j,z)$ satisfies Eqs.~\eqref{eq: recurrence relation F(i,j,z)}--\eqref{eq: terminal condition F(i,j,1)}. Fix $(i,j)\in \mathbb{N}_0^2\setminus\{(0,0)\}$ and $z \in [0,1]$. For any $m \in \mathbb{Z}$, since both $(\alpha_m, \beta_m)$ and $(\alpha_{m+1}, \beta_m)$ belong to $\mathcal{K}$, the terms $\alpha_m^i \beta_m^j$ and $\alpha_{m+1}^i \beta_m^j$ satisfy Eq.~\eqref{eq: recurrence relation F(i,j,z)}. By linearity, it follows that $G(i,j,z)$ satisfies the same equation. Moreover, we set, for any $m \in \mathbb{Z}$
\begin{align*}
	H_m(i,j) &= d_{m}(z) \alpha_{m}^i \beta_{m-1}^j +  c_m(z) \alpha_m^i \beta_m^j ,\\
     V_m(i,j) &= c_m(z) \alpha_m^i \beta_m^j + d_{m+1}(z) \alpha_{m+1}^i \beta_m^j.
\end{align*}
Therefore, the function $G(i,j,z)$ can be written as
\begin{equation*}
	G(i,j,z) = \sum_{m\in\mathbb{Z}} H_m(i,j) = \sum_{m\in\mathbb{Z}} V_m(i,j).
\end{equation*}
It is easily seen that $H_m(i,0)$ and $V_m(0,j)$ satisfy Eq.~\eqref{eq: horizontal condition F(i,j,z)} and Eq.~\eqref{eq: vertical condition F(i,j,z)}, respectively. As a result, $G(i,0) = \sum_{m\in\mathbb{Z}} H_m (i,0)$ and $G(0,j) = \sum_{m\in\mathbb{Z}} V_m (0,j)$ also satisfy Eq.~\eqref{eq: horizontal condition F(i,j,z)} and Eq.~\eqref{eq: vertical condition F(i,j,z)}, respectively. Finally, observe that when $z=1$, all coefficients $c_m(1)$ and $d_m(1)$ vanish except for $c_0(1)\alpha_0^i\beta_0^j=1$, which yields $G(i,j,1) =1$.
\end{proof}

\subsection{Proof of Theorem~\ref{thm: G(i,j,z) generating function}}
In this subsection, we prove that $G(i,j,z)$ is indeed the generating function of the number of contacts $Z_{(i,j)}$, as stated in Theorem~\ref{thm: G(i,j,z) generating function}.

\paragraph{A remark on non-uniqueness of the solutions.}

We should first emphasize that the system of equations \eqref{eq: recurrence relation F(i,j,z)}--\eqref{eq: terminal condition F(i,j,1)} admits infinitely many solutions, among which the series $G(i,j,z)$ in \eqref{eq:def_function_G} is one particular instance. Indeed, alternative solutions can be obtained by altering the initial value $(\alpha_0,\beta_0)$ or the base coefficient $c_0(z)$, while keeping the same recursive structure \eqref{eq: c_m d_m definition} for the remaining $(\alpha_m,\beta_m)$, $c_m(z)$, and $d_m(z)$. More precisely, one can choose any pair $(\alpha_0,\beta_0)$ lying on the curve
\begin{equation*}
\{(\alpha,u(\alpha)):\alpha\in[\widehat{\alpha},1]\} \cup \{(v(\beta),\beta):\beta\in[\widetilde{\beta},1]\},
\end{equation*}
along with any function $c_0(z)$ that is bounded on $[0,1]$. The series $G(i,j,z)$ constructed from these alternative initial values then satisfies Eqs.~\eqref{eq: recurrence relation F(i,j,z)}--\eqref{eq: vertical condition F(i,j,z)}, though it may fail to satisfy the terminal condition \eqref{eq: terminal condition F(i,j,1)}. Conversely, if the initial values are fixed to $\alpha_0 = \beta_0 = 1$, then for any bounded function $c_0(z)$ on $[0,1]$ with $c_0(1) = 1$, the resulting series $G(i,j,z)$ gives another solution to the full system \eqref{eq: recurrence relation F(i,j,z)}--\eqref{eq: terminal condition F(i,j,1)}.

This non-uniqueness leads to difficulties, as the system of equations \eqref{eq: recurrence relation F(i,j,z)}--\eqref{eq: terminal condition F(i,j,1)} alone does not suffice to characterize the solutions, and we will need to invoke certain probabilistic properties of the solution.

\paragraph{Power series expansion of $G(i,j,z)$.}
Thanks to Lemma~\ref{lem: G(i,j,z) convergence}, we may rearrange the terms of $G(i,j,z)$ into a power series in $z$, whose radius of convergence is equal to $1$. For any $m \in \mathbb{Z}$, we write the expansions of $c_m(z)$ and $d_m(z)$ as
\begin{equation*}
	c_m(z) = \sum_{n\geq 0} c_{m,n} z^n \quad\text{and}\quad
	d_m(z) = \sum_{n\geq 0} d_{m,n} z^n.
\end{equation*}
Then $G(i,j,z)$ has the representation $G(i,j,z) = \sum_{n\geq 0} g_n(i,j) z^n$, where the coefficients $g_n(i,j)$ are defined as
\begin{equation}
\label{eq:def_gnij}
	g_n (i,j) =  \sum_{m\in\mathbb{Z}} \bigl( c_{m,n} \alpha_m^i \beta_m^j + d_{m+1,n} \alpha_{m+1}^i \beta_m^j \bigr).
\end{equation}
The series $g_n(i,j)$ in \eqref{eq:def_gnij} converges absolutely for any $n \geq 0$ and $(i,j) \in \mathbb{N}_0^2\setminus\{(0,0)\}$. 

Our goal is to establish the identity $g_n = f_n$ for all $n \geq 0$, where, recall, $f_n(i,j)$ denotes the probability $\mathbf{P}(Z_{(i,j)} = n)$ (see \eqref{eq:def_f_n}). The strategy is to show that both $g_n$ and $f_n$, which are harmonic with respect to the same discrete Laplacian, have identical boundary values and same asymptotic behavior at infinity. We will then apply the maximum principle for discrete harmonic functions to conclude the proof.

Recall that $\alpha_0 = \beta_0 = 1$ and that $\alpha_m, \beta_m \in(0, 1)$ for all $m \neq 0$. It follows that, for any $n \geq 0$, $g_n$ admits the following asymptotic expression as $i+j\to\infty$:
\begin{equation*}
	g_n (i,j) = c_{0,n} + d_{1,n}\alpha_1^i + d_{0,n}\beta_{-1}^j + o(1).
\end{equation*}
Hence only three terms dominate the series as the sum $i + j$ tends to infinity. Fortunately, since the functions $c_0(z)$, $d_1(z)$, and $d_0(z)$ admit particularly simple expressions
\begin{equation*}
	c_0(z) = 1,\quad d_1(z) = - \frac{1-z}{1- \widetilde{v}_1 z},\quad d_0(z) = - \frac{1-z}{1- \widetilde{h}_0z},
\end{equation*}
see \eqref{eq: c_m d_m definition}, we even have explicit expressions for the dominant terms of $g_n$ as $(i+j)\to\infty$:
\begin{equation}\label{eq: g_n asymptotic}
    \begin{cases}\displaystyle
        g_0 (i,j) =1- \alpha_1^i - \beta_{-1}^j + o(1),\\
        g_n(i,j) = \bigl( 1- \widetilde{v}_1 \bigr)    \widetilde{v}_1^{n-1}\alpha_1^i +   \bigl( 1- \widetilde{h}_0 \bigr)    \widetilde{h}_0 ^{n-1}\beta_{-1}^j + o(1),\quad n\geq 1.
    \end{cases}
\end{equation}

\paragraph{Asymptotics of $f_n(i,j)$.}
We now focus on the asymptotic behavior of $f_n(i,j)$ as defined in \eqref{eq:def_f_n}. Our analysis relies on estimating hitting probabilities when the walk starts far from the origin. The following lemma provides the asymptotic probabilities of three complementary events: avoiding the boundary, hitting the vertical axis, and hitting the horizontal axis.

We introduce the functions $f_{[\mathbf{v}]}$ and $f_{[\mathbf{h}]}$, defined for all $(i,j) \in \mathbb{N}_0^2$ by
\begin{equation*}
    f_{[\mathbf{v}]} (i,j) = \mathbf{P} \Bigl( \tau_1^{i,j} < \infty , \Pi_1\bigl(S^{i,j}_{ \tau_1^{i,j} }\bigr) =0\Bigr)\quad \text{and}\quad 
    f_{[\mathbf{h}]} (i,j) = \mathbf{P} \Bigl( \tau_1^{i,j} < \infty , \Pi_2\bigl(S^{i,j}_{ \tau_1^{i,j} }\bigr) =0\Bigr).
\end{equation*}
Here, the subscripts $[\mathbf{v}]$ and $[\mathbf{h}]$ are used to indicate “vertical” and “horizontal,” respectively. The square brackets are included to clearly distinguish these notations from the functions $f_n$.

\begin{lemma}\label{lem: f_0 asymptotic}
With $f_0$ as introduced in \eqref{eq:def_f_n}, and $f_{[\mathbf{v}]}$ and $f_{[\mathbf{h}]}$ as defined above, we have, as $i + j \to \infty$,
	\begin{gather}
	\label{eq: f_0, f hat, f tilde asymptotic behavior}
	f_0(i,j)= 1- \alpha_1^i - \beta_{-1}^j + O\bigl( (\alpha_1\vee \beta_{-1} )^{\tfrac{1}{2}(i+j)} \bigr),\\
	f_{[\mathbf{v}]} (i,j) = \alpha_1^i + O\bigl( (\alpha_1\vee \beta_{-1} )^{\tfrac{1}{2}(i+j)} \bigr),\quad
	f_{[\mathbf{h}]} (i,j)= \beta_{-1}^j +O\bigl( (\alpha_1\vee \beta_{-1} )^{\tfrac{1}{2}(i+j)} \bigr).\notag
\end{gather}
\end{lemma}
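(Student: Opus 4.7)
The strategy reduces the lemma to an elementary computation on the free walk $\{(i,j)+S_n'\}$ introduced in \eqref{eq: walk S'_n}. Let me write $A=\{\widehat{\tau}^{(i,j)}<\infty\}$ and $B=\{\widetilde{\tau}^{(i,j)}<\infty\}$ for the events that the first (resp.\ second) coordinate of this walk eventually reaches $0$. Since inside $\mathbb{N}^2$ the reflected walk $\{S_n^{(i,j)}\}$ coincides step by step with the free walk up to the first boundary contact, one has $\{\tau_1^{(i,j)}<\infty\}=A\cup B$, while Lemma~\ref{lem: formula hitting axes} gives the marginals $\mathbf{P}(A)=\alpha_1^i$ and $\mathbf{P}(B)=\beta_{-1}^j$. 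The plan is to express each of $f_0,f_{[\mathbf{v}]},f_{[\mathbf{h}]}$ in terms of these two events and then bound the overlap $\mathbf{P}(A\cap B)$.

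First, inclusion-exclusion immediately gives
$$f_0(i,j)=1-\mathbf{P}(A\cup B)=1-\alpha_1^i-\beta_{-1}^j+\mathbf{P}(A\cap B),$$
so the first estimate in \eqref{eq: f_0, f hat, f tilde asymptotic behavior} will follow once $\mathbf{P}(A\cap B)$ is controlled. For $f_{[\mathbf{v}]}$ and $f_{[\mathbf{h}]}$, I would invoke Assumption~\ref{asm: B3} to rule out a simultaneous first hit of the two axes: from any $(i,j)\in\mathbb{N}^2$, landing at $(0,0)$ on the very step that first reaches an axis requires a jump of type $(-1,-1)$, which has $p_{-1,-1}=0$ by singularity (combined with the small-step assumption \ref{asm: B2}). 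Hence $\widehat{\tau}^{(i,j)}\neq \widetilde{\tau}^{(i,j)}$ almost surely on $\{\tau_1^{(i,j)}<\infty\}$, so that
$$f_{[\mathbf{v}]}(i,j)=\alpha_1^i-\mathbf{P}(\widetilde{\tau}^{(i,j)}<\widehat{\tau}^{(i,j)}<\infty),\qquad f_{[\mathbf{h}]}(i,j)=\beta_{-1}^j-\mathbf{P}(\widehat{\tau}^{(i,j)}<\widetilde{\tau}^{(i,j)}<\infty),$$
and both remainders are bounded by $\mathbf{P}(A\cap B)$. The degenerate cases $i=0$ or $j=0$ are verified directly, using the convention (cf.\ the comment after \eqref{eq: definition alternative Z_x}) that a starting point on the axes is counted as a first contact.

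The decisive step is then the elementary bound
$$\mathbf{P}(A\cap B)\leq \mathbf{P}(A)\wedge \mathbf{P}(B)\leq \sqrt{\mathbf{P}(A)\,\mathbf{P}(B)}=\alpha_1^{i/2}\beta_{-1}^{j/2}\leq (\alpha_1\vee \beta_{-1})^{(i+j)/2},$$
which relies only on $\min(a,b)\leq\sqrt{ab}$ for $a,b\geq 0$, together with the fact that $0<\alpha_1,\beta_{-1}<1$ (Lemma~\ref{lem: formula hitting axes}). Substituting this into the three identities above yields the lemma.

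I do not expect a serious obstacle: the argument is essentially a two-step inclusion-exclusion on hitting events of the free walk, followed by the trivial estimate $\mathbf{P}(A\cap B)\leq \mathbf{P}(A)\wedge \mathbf{P}(B)$. The only delicate point is the non-coincidence of $\widehat{\tau}^{(i,j)}$ and $\widetilde{\tau}^{(i,j)}$, where the singularity assumption \ref{asm: B3} is needed to exclude a direct jump into $(0,0)$; this prevents the overlap between $f_{[\mathbf{v}]}$ and $f_{[\mathbf{h}]}$ from contributing an uncontrolled term.
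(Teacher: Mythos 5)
Your proof is correct and follows essentially the same route as the paper's: both reduce the statement to the free walk $\{(i,j)+S_n'\}$, identify $\tau_1^{(i,j)}=\widehat{\tau}^{(i,j)}\wedge\widetilde{\tau}^{(i,j)}$, and the decisive estimate is the same elementary bound $\alpha_1^i\wedge\beta_{-1}^j\leq\sqrt{\alpha_1^i\beta_{-1}^j}\leq(\alpha_1\vee\beta_{-1})^{(i+j)/2}$. The only cosmetic difference is that you isolate the error term as $\mathbf{P}(A\cap B)$ by exact inclusion-exclusion and split $f_{[\mathbf{v}]},f_{[\mathbf{h}]}$ individually, whereas the paper sandwiches $1-f_0$ between $\alpha_1^i\vee\beta_{-1}^j$ and $\alpha_1^i+\beta_{-1}^j$ (equivalently controlling the same overlap $\alpha_1^i\wedge\beta_{-1}^j$) and then uses the identity $f_{[\mathbf{v}]}+f_{[\mathbf{h}]}=1-f_0$ together with the one-sided bounds $f_{[\mathbf{v}]}\leq\alpha_1^i$, $f_{[\mathbf{h}]}\leq\beta_{-1}^j$. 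You make explicit the role of the singularity assumption \ref{asm: B3} (excluding a simultaneous first hit at the origin), which the paper uses only implicitly when writing $f_{[\mathbf{v}]}+f_{[\mathbf{h}]}=1-f_0$; this is a small but genuine clarification.
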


Intuitively, Lemma~\ref{lem: f_0 asymptotic} suggests the following: when the walk starts far from both axes, it is most likely to avoid them both; conversely, if it starts far from one axis but close to the other, it rarely hits the distant boundary and behaves, with respect to the nearby axis, much like a one-dimensional random walk.

\begin{proof}
 Recall from \eqref{eq: f_0 upper lower} that we have for all $(i,j)\in\mathbb N_0^2$
	 \begin{equation*}
       \alpha_1^i + \beta_{-1}^j -  \alpha_1^i  \wedge \beta_{-1}^j \leq 1- f_0(i,j) \leq \alpha_1^i + \beta_{-1}^j.
 \end{equation*}
Together with the fact that $\alpha_1^i  \wedge \beta_{-1}^j=O\bigl( (\alpha_1\vee \beta_{-1} )^{\tfrac{1}{2}(i+j)} \bigr)$ as $ i+j\to\infty$, we can deduce the asymptotic behavior of $f_0(i,j)$. Now observe that
\begin{equation}
\label{eq:nobt}
	f_{[\mathbf{v}]}(i,j) + f_{[\mathbf{h}]}(i,j) = 1- f_0(i,j) = \alpha_1^i + \beta_{-1}^j + O\bigl( (\alpha_1\vee \beta_{-1} )^{\tfrac{1}{2}(i+j)} \bigr),
\end{equation}
as $i+j\to\infty$, and based on Lemma~\ref{lem: formula hitting axes},
\begin{equation*}
		f_{[\mathbf{v}]} (i,j) \leq  \mathbf{P} ( \widehat{\tau}^{i,j} < \infty) = \alpha_1^i\quad \text{and}\quad
		f_{[\mathbf{h}]} (i,j) \leq  \mathbf{P} ( \widetilde{\tau}^{i,j} < \infty) = \beta_{-1}^j.
\end{equation*}
Substituting these upper bounds into \eqref{eq:nobt}, we obtain the asymptotic behavior of $f_{[\mathbf{v}]}(i,j)$ and $f_{[\mathbf{h}]}(i,j)$. This concludes the proof.
\end{proof}

We now build on Lemma~\ref{lem: f_0 asymptotic} to analyze the probabilities of subsequent boundary contacts for a random walk starting far from the origin and conditioned on having hit the boundary multiple times before. By the strong Markov property, the walk's behavior under this conditioning depends only on its most recent boundary contact. Consequently, we first introduce the following conditioned probability measures:
\begin{equation*}
        \mathbf{P}_n^{\mathbf{v}} =\mathbf{P}\left(\cdot \left| \tau_n^{i,j} < \infty , \Pi_1 \bigl(S^{i,j}_{\tau_n^{i,j} } \bigr)= 0 \right. \right) 
        \quad\text{and}\quad
        \mathbf{P}_n^{\mathbf{h}} =\mathbf{P}\left(\cdot \left| \tau_n^{i,j} < \infty , \Pi_2 \bigl(S^{i,j}_{\tau_n^{i,j} } \bigr)= 0 \right. \right),
\end{equation*}
for any $(i,j)\in\mathbb N_0^2$ and $n\geq 1$. The following result describes the asymptotic behavior of having one additional boundary contact under these conditioned measures.

\begin{lemma}
\label{lem: probability from axis to axis}
	For any $n\geq 1$, we have as $i+j\to\infty$ 
	\begin{align}
		& \label{eq: asymptotic 2nd contact conditioned 1st contact at vertical line}
        \mathbf{P}_n^{\mathbf{v}} \left(  \tau_{n+1}^{i,j} < \infty \right) = \mathbf{P}_n^{\mathbf{v}} \left(  \tau_{n+1}^{i,j} < \infty, \Pi_1 \bigl(S^{i,j}_{\tau_{n+1}^{i,j} } \bigr)= 0  \right)  + o(1) =   \widetilde{v}_1 + o(1), \\
        & \label{eq: asymptotic 2nd contact conditioned 1st contact at horizontal line}
        \mathbf{P}_n^{\mathbf{h}} \left(  \tau_{n+1}^{i,j} < \infty \right) = \mathbf{P}_n^{\mathbf{v}} \left(  \tau_{n+1}^{i,j} < \infty, \Pi_2 \bigl(S^{i,j}_{\tau_{n+1}^{i,j} } \bigr)= 0  \right)  + o(1) =   \widetilde{h}_0 + o(1).
\end{align}
\end{lemma}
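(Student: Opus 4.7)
The plan is to reduce the lemma to a one-step analysis after the $n$-th boundary contact via the strong Markov property, and then identify the limit using Lemma~\ref{lem: f_0 asymptotic}.

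First, I would apply the strong Markov property at $\tau_n^{i,j}$: under $\mathbf{P}_n^{\mathbf{v}}$, conditionally on $J:=\Pi_2(S^{i,j}_{\tau_n^{i,j}})$, the subsequent evolution is that of a fresh walk starting at $(0,J)$. The central geometric observation — and the main obstacle to settle carefully — is that the singularity \ref{asm: B3} (together with the skip-freeness \ref{asm: B2}, the boundary constraint \ref{asm: B7}, and the implicit requirement that the walk stay in $\mathbb{N}_0^2$) forces every admissible increment $(k,\ell)$ — interior or boundary — to satisfy $k+\ell\geq 0$. Consequently $\Pi_1(S^{i,j}_t)+\Pi_2(S^{i,j}_t)$ is non-decreasing in $t$, whence $J\geq i+j$ almost surely under $\mathbf{P}_n^{\mathbf{v}}$, and therefore $J\to\infty$ as $i+j\to\infty$.

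Next, for each fixed $J\geq 1$, I decompose on the first jump from $(0,J)$ via $\eta^{\mathbf{v}}$. Contributions with $k=0$ (which force $\ell\geq 1$ by \ref{asm: B7}) keep the walk on the vertical axis and immediately produce a vertical-axis contact; contributions with $k\geq 1$ send the walk into the interior at $(k,J+\ell)$. Along each such interior branch, Lemma~\ref{lem: f_0 asymptotic} yields the pointwise limits $f_{[\mathbf{v}]}(k,J+\ell)\to\alpha_1^k$ and $f_{[\mathbf{h}]}(k,J+\ell)\to 0$ as $J\to\infty$. Letting $\phi(J)$ and $\psi(J)$ denote the probabilities that a walk from $(0,J)$ next hits the vertical, respectively horizontal, axis, bounded convergence (valid because each integrand is at most $1$ and $\sum_{k,\ell}v_{k,\ell}=1$) gives
\begin{equation*}
    \phi(J)\longrightarrow \sum_{k,\ell}v_{k,\ell}\alpha_1^k=\widetilde v_1 \quad\text{and}\quad \psi(J)\longrightarrow 0.
\end{equation*}

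Finally, since $J\geq i+j$ almost surely under $\mathbf{P}_n^{\mathbf{v}}$, these pointwise asymptotics lift to expectations: for any $\varepsilon>0$, choose $N$ with $|\phi(J)-\widetilde v_1|<\varepsilon$ and $\psi(J)<\varepsilon$ for all $J\geq N$; then $i+j\geq N$ forces $J\geq N$ almost surely, hence $|\mathbf{E}_n^{\mathbf{v}}[\phi(J)]-\widetilde v_1|\leq\varepsilon$ and $\mathbf{E}_n^{\mathbf{v}}[\psi(J)]\leq\varepsilon$. This yields \eqref{eq: asymptotic 2nd contact conditioned 1st contact at vertical line}, and \eqref{eq: asymptotic 2nd contact conditioned 1st contact at horizontal line} follows by the symmetric argument in which $(\eta^{\mathbf{v}},\alpha_1,\widetilde v_1)$ is replaced by $(\eta^{\mathbf{h}},\beta_{-1},\widetilde h_0)$ and the roles of the two axes are swapped.
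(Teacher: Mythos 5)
Your proposal is correct and follows essentially the same route as the paper: both condition on the second coordinate $J = \Pi_2(S^{i,j}_{\tau_n^{i,j}})$ after the $n$-th (vertical) contact, note that $J \geq i+j$ (the paper does this implicitly by summing over $j' \geq i+j$), decompose the one-step evolution from $(0,J)$ via $\eta^{\mathbf{v}}$, and pass to the limit using Lemma~\ref{lem: f_0 asymptotic} to identify $\widetilde{v}_1$ and kill the horizontal-hit probability. Your explicit $\varepsilon$-argument and dominated-convergence step simply spell out in more detail what the paper's $O$-bound handles directly.
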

The lemma above conveys a straightforward idea: when a random walk starts far from the origin and reaches one axis, it is unlikely to hit the other axis afterward. Instead, the walk is more likely to either return to the same axis or avoid both axes altogether.

\begin{proof}
We evaluate the probability that the random walk hits the vertical (resp.\ horizontal) axis for the $(n+1)$\textsuperscript{th} time with $n\geq 1$, given that it has previously hit the vertical axis at the $n$\textsuperscript{th} time, as follows:
\begin{align*}
&\mathbf{P}_n^{\mathbf{v}} \left(  \tau_{n+1}^{i,j} < \infty,  \Pi_1 \bigl(S^{i,j}_{\tau_{n+1}^{i,j} } \bigr)= 0 \right) \\
& = \sum_{j'\geq i+j} \mathbf{P}_n^{\mathbf{v}} \left( \Pi_2 \bigl(S^{i,j}_{\tau_n^{i,j} } \bigr)= j'  \right) \mathbf{P}_n^{\mathbf{v}} \left( \tau_{n+1}^{i,j} < \infty,  \Pi_1 \bigl(S^{i,j}_{\tau_{n+1}^{i,j} } \bigr)= 0 \left| \Pi_2 \bigl(S^{i,j}_{\tau_n^{i,j} } \bigr)= j'  \right.\right) \\
&=\sum_{j'\geq i+j} \mathbf{P}_n^{\mathbf{v}} \left( \Pi_2 \bigl(S^{i,j}_{\tau_n^{i,j} } \bigr)= j'  \right) \sum_{k,\ell} v_{k,\ell} f_{[\mathbf{v}]} (k, j'+\ell) \\
&=\sum_{j'\geq i+j} \mathbf{P}_n^{\mathbf{v}} \left( \Pi_2 \bigl(S^{i,j}_{\tau_n^{i,j} } \bigr)= j'  \right) \sum_{k,\ell} v_{k,\ell} \left(\alpha_1^k  + O\bigl( (\alpha_1\vee \beta_{-1} )^{\tfrac{1}{2}(j'+k+\ell)} \bigr) \right) \\
&=  \sum_{k,\ell} v_{k,\ell} \alpha_1^k  + o(1)\\
&= \widetilde{v}_1 + o(1),
\end{align*}
and
\begin{align*}
	&\mathbf{P}_n^{\mathbf{v}}  \left(  \tau_{n+1}^{i,j} < \infty, \Pi_2 \bigl(S^{i,j}_{\tau_{n+1}^{i,j} } \bigr) = 0 \right) \\
    &= \sum_{j'\geq i+j} \mathbf{P}_n^{\mathbf{v}} \left( \Pi_2 \bigl(S^{i,j}_{\tau_n^{i,j} } \bigr)= j'  \right) \mathbf{P}_n^{\mathbf{v}} \left( \tau_{n+1}^{i,j} < \infty, \Pi_2 \bigl(S^{i,j}_{\tau_{n+1}^{i,j} } \bigr)= 0 \left| \Pi_2 \bigl(S^{i,j}_{\tau_n^{i,j} } \bigr)= j' \right. \right) \\
	 & = \sum_{j'\geq i+j} \mathbf{P}_n^{\mathbf{v}} \left( \Pi_2 \bigl(S^{i,j}_{\tau_n^{i,j} } \bigr)= j'  \right) \sum_{k,\ell} v_{k,\ell} f_{[\mathbf{h}]} (k,j'+\ell) \\
	 &=  \sum_{j'\geq i+j} \mathbf{P}_n^{\mathbf{v}} \left( \Pi_2 \bigl(S^{i,j}_{\tau_n^{i,j} } \bigr)= j'  \right) \sum_{k,\ell} v_{k,\ell} \left(\beta_{-1}^{j'+\ell} + O\bigl( (\alpha_1\vee \beta_{-1} )^{\tfrac{1}{2}(j'+k+\ell)} \bigr) \right)\\
	 &= o(1),
\end{align*}
as $i+j\to\infty$. Summing the above two equalities, we deduce \eqref{eq: asymptotic 2nd contact conditioned 1st contact at vertical line}. The proof for \eqref{eq: asymptotic 2nd contact conditioned 1st contact at horizontal line} follows similarly.
\end{proof}

Lemma~\ref{lem: probability from axis to axis} enables us to evaluate the asymptotic probability of $n$ boundary contacts, namely, $f_n (i,j)$, by carefully decomposing the event into a sequence of distinct sub-events corresponding to the first, second, third, and subsequent boundary contacts.

\begin{lemma}
For any $n\geq 1$, we have as $i+j\to\infty$,
\begin{equation}
\label{eq: f_n asymptotic behavior}
    f_n (i,j) =  \bigl( 1- \widetilde{v}_1 \bigr)    \widetilde{v}_1^{n-1}\alpha_1^i +   \bigl( 1- \widetilde{h}_0 \bigr)    \widetilde{h}_0 ^{n-1}\beta_{-1}^j + o(1).
\end{equation}
\end{lemma}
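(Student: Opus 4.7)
The plan is to decompose $f_n(i,j) = \mathbf{P}(\tau_n^{i,j} < \infty, \tau_{n+1}^{i,j} = \infty)$ according to which boundary axis the walk hits at each of its $n$ successive contacts. Let $A_k \in \{\mathbf{v}, \mathbf{h}\}$ indicate, on $\{\tau_k^{i,j} < \infty\}$, whether $S_{\tau_k^{i,j}}^{i,j}$ lies on the vertical axis ($\Pi_1 = 0$) or on the horizontal axis ($\Pi_2 = 0$); the corner $(0,0)$ is unreachable from $(i,j) \neq (0,0)$ thanks to the singularity assumption~\ref{asm: B3}, so $A_k$ is well defined. Then
\begin{equation*}
f_n(i,j) = \sum_{(a_1,\ldots,a_n) \in \{\mathbf{v},\mathbf{h}\}^n} \mathbf{P}\bigl(A_1 = a_1, \ldots, A_n = a_n, \tau_{n+1}^{i,j} = \infty\bigr).
\end{equation*}
The claim is that, as $i + j \to \infty$, only the two pure sequences $(\mathbf{v}, \ldots, \mathbf{v})$ and $(\mathbf{h}, \ldots, \mathbf{h})$ contribute to leading order, matching the two terms in \eqref{eq: f_n asymptotic behavior}.

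The contribution of the pure vertical sequence is obtained by iterated application of the strong Markov property at $\tau_1^{i,j}, \ldots, \tau_{n-1}^{i,j}$. Using the definitions of $\mathbf{P}_k^{\mathbf{v}}$ and $f_{[\mathbf{v}]}$, the probability factors as
\begin{equation*}
f_{[\mathbf{v}]}(i,j) \cdot \prod_{k=1}^{n-1} \mathbf{P}_k^{\mathbf{v}}\bigl(\tau_{k+1}^{i,j} < \infty,\, A_{k+1} = \mathbf{v}\bigr) \cdot \bigl(1 - \mathbf{P}_n^{\mathbf{v}}(\tau_{n+1}^{i,j} < \infty)\bigr).
\end{equation*}
By Lemma~\ref{lem: f_0 asymptotic}, $f_{[\mathbf{v}]}(i,j) = \alpha_1^i + o(1)$, and by Lemma~\ref{lem: probability from axis to axis} each inner factor equals $\widetilde{v}_1 + o(1)$ while the final factor equals $1 - \widetilde{v}_1 + o(1)$. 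Multiplying and retaining the leading term produces $(1 - \widetilde{v}_1)\widetilde{v}_1^{n-1} \alpha_1^i + o(1)$. The symmetric argument on the pure horizontal sequence yields the second main term $(1 - \widetilde{h}_0)\widetilde{h}_0^{n-1} \beta_{-1}^j + o(1)$.

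It remains to absorb the mixed sequences into the $o(1)$ remainder. In any such sequence there is an index $k \leq n-1$ with $a_k \neq a_{k+1}$; at that step the corresponding conditional probability $\mathbf{P}_k^{a_k}(\tau_{k+1}^{i,j} < \infty,\, A_{k+1} = a_{k+1})$ is of order $o(1)$ by the second equality in either \eqref{eq: asymptotic 2nd contact conditioned 1st contact at vertical line} or \eqref{eq: asymptotic 2nd contact conditioned 1st contact at horizontal line}. Since $n$ is fixed and there are only $2^n$ sequences, summing yields $o(1)$ in total. The main subtle point that needs verifying is that the $o(1)$ estimates of Lemma~\ref{lem: probability from axis to axis} propagate correctly through the iterated conditioning; this is where the singularity assumption \ref{asm: B3} is essential, since it forces every admissible interior jump $(k,\ell)$ to satisfy $k + \ell \geq 0$, so $\Pi_1(S_m^{i,j}) + \Pi_2(S_m^{i,j})$ is non-decreasing in $m$ and the unique non-vanishing coordinate of $S_{\tau_k^{i,j}}^{i,j}$ is always at least $i+j$. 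This monotonicity guarantees that the asymptotic bounds hold uniformly over the conditional distribution of $S_{\tau_k^{i,j}}^{i,j}$, legitimising the chain of applications of Lemma~\ref{lem: probability from axis to axis} and completing the argument.
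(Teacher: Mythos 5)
Your proposal is essentially the same argument as in the paper, just unpacked more explicitly. The paper's proof also begins by splitting $f_n(i,j)$ according to whether the \emph{first} contact is on the vertical or horizontal axis, and then writes each of these two terms as a telescoping product $\mathbf{P}_n^{\mathbf{v}}(\tau_{n+1}^{i,j}=\infty)\mathbf{P}_{n-1}^{\mathbf{v}}(\tau_n^{i,j}<\infty)\cdots\mathbf{P}(\tau_1^{i,j}<\infty,\Pi_1(S^{i,j}_{\tau_1^{i,j}})=0)$ plus an $o(1)$ error, absorbing into the $o(1)$ precisely the contributions from paths that switch between axes. Your version makes this decomposition explicit over all $2^n$ sequences $(a_1,\ldots,a_n)\in\{\mathbf{v},\mathbf{h}\}^n$ and checks that each mixed sequence contributes $o(1)$ via the second equality in Lemma~\ref{lem: probability from axis to axis}, which is exactly what the paper's ``$+o(1)$'' step is silently invoking. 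Your closing remark about uniformity — that the singularity assumption~\ref{asm: B3} makes $\Pi_1+\Pi_2$ non-decreasing, hence the coordinate at each contact time is at least $i+j$, so the error terms from Lemma~\ref{lem: probability from axis to axis} are uniform over the conditional law at $\tau_k^{i,j}$ — is a genuine technical point the paper leaves implicit in the main proof (though it does appear, in the form of the restriction $j'\geq i+j$ in the sums, inside the proof of Lemma~\ref{lem: probability from axis to axis} itself). In short, your proof is correct and is a more carefully justified rendering of the paper's argument rather than a different route.
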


\begin{proof}
By iteratively decomposing the event $\{\tau_1^{i,j}<\infty,\ldots,\tau_n^{i,j}<\infty, \tau_{n+1}^{i,j}=\infty\}$ using the strong Markov property, together with the estimates from Lemma~\ref{lem: probability from axis to axis}, we obtain the following asymptotic evaluation for any $n \geq 1$ as $i + j \to \infty$:
\begin{align*}
	f_n(i,j) &= \mathbf{P} \Bigl(\tau_1^{i,j}<\infty,\ldots,\tau_n^{i,j}<\infty, \tau_{n+1}^{i,j}=\infty,  \Pi_1 \bigl(S^{i,j}_{\tau_1^{i,j} } \bigr)= 0 \Bigr) \\
    &\quad+ \mathbf{P} \Bigl(\tau_1^{i,j}<\infty,\ldots,\tau_n^{i,j}<\infty, \tau_{n+1}^{i,j}=\infty,  \Pi_2 \bigl(S^{i,j}_{\tau_1^{i,j} } \bigr)= 0 \Bigr)  \\
	&= \mathbf{P}_{n}^{\mathbf{v}} \bigl(\tau^{i,j}_{n+1} = \infty \bigr) \mathbf{P}_{n-1}^{\mathbf{v}} \bigl(\tau^{i,j}_{n} < \infty \bigr) \cdots \mathbf{P}_{1}^{\mathbf{v}} \bigl(\tau^{i,j}_{2} < \infty \bigr) \mathbf{P}\Bigl(\tau_1^{i,j}<\infty,  \Pi_1 \bigl(S^{i,j}_{\tau_1^{i,j} } \bigr)= 0 \Bigr) \\
	&\quad +  \mathbf{P}_{n}^{\mathbf{h}} \bigl(\tau^{i,j}_{n+1} = \infty \bigr) \mathbf{P}_{n-1}^{\mathbf{h}} \bigl(\tau^{i,j}_{n} < \infty \bigr)\cdots \mathbf{P}_{1}^{\mathbf{h}} \bigl(\tau^{i,j}_{2} < \infty \bigr) \mathbf{P}\Bigl(\tau_1^{i,j}<\infty,  \Pi_2 \bigl(S^{i,j}_{\tau_1^{i,j} } \bigr)= 0 \Bigr) \\
    &\quad+ o(1)\\
	&= \bigl( 1- \widetilde{v}_1 \bigr)    \widetilde{v}_1^{n-1}\alpha_1^i +   \bigl( 1- \widetilde{h}_0 \bigr)    \widetilde{h}_0 ^{n-1}\beta_{-1}^j + o(1).\qedhere
\end{align*}
\end{proof}

\paragraph{Conclusion of the proof.}
Having established the asymptotic behavior of $f_n(i,j)$, we are now prepared to prove Theorem~\ref{thm: G(i,j,z) generating function}.

\begin{proof}[Proof of Theorem~\ref{thm: G(i,j,z) generating function}]
We prove the identity between $f_n$ and $g_n$ by induction on $n\geq 0$. Starting with the base case $n=0$, note that both $f_0$ and $g_0$ are harmonic on $\mathbb{N}^2$, vanish on the boundary axes (as they satisfy Eqs.~\eqref{eq: recurrence relation f_n(i,j)}--\eqref{eq: vertical condition f_n(i,j)} by construction), and exhibit the same asymptotic behavior as $i+j \to \infty$ (see \eqref{eq: g_n asymptotic} and \eqref{eq: f_0, f hat, f tilde asymptotic behavior}). It follows that their difference $f_0 - g_0$ is also harmonic, vanishes on the boundary, and tends to zero at infinity. By the maximum principle for harmonic functions, we deduce that $f_0 - g_0 = 0$ on $\mathbb{N}_0^2\setminus\{(0,0)\}$, thus establishing the base case.

We now assume that $f_n = g_n$ for some $n \geq 0$, and aim to show that the same equality holds for $n+1$. By construction, both $f_{n+1}$ and $g_{n+1}$ are harmonic on $\mathbb{N}^2$, and they inherit the same boundary values from the equality $f_n = g_n$ established at the previous step. In addition, $f_{n+1}$ and $g_{n+1}$ share the same asymptotic behavior as $i + j \to \infty$ (see \eqref{eq: g_n asymptotic} and \eqref{eq: f_n asymptotic behavior}).
It follows that the difference $f_{n+1} - g_{n+1}$ is harmonic on $\mathbb{N}^2$, vanishes on the boundary, and tends to zero at infinity. Applying the same maximum principle argument as in the base case, we conclude that $f_{n+1} - g_{n+1} = 0$ on $\mathbb{N}_0^2\setminus\{(0,0)\}$. This completes the induction step and establishes that $f_n = g_n$ for all $n \geq 0$.
\end{proof}

\subsection{Asymptotic analytics}
In this subsection, we examine the asymptotic decay of the probability $f_n(i,j)$ as $n\to\infty$. 
\begin{proposition}\label{prop: f_n asymptotic}
    Under Assumptions~\ref{asm: B1}--\ref{asm: B8}, for any $(i,j)\in\mathbb N_0^2\setminus\{(0,0)\}$, the number of contacts $f_n(i,j)$ has the asymptotic behavior as $n\to \infty$:
      \begin{equation}\label{eq: f_n asymptotic in n}
        f_n(i,j) \sim C_{i,j} \left(  \widetilde{v}_1 \vee \widetilde{h}_0 \right)^{n}, 
    \end{equation}
      where
    \begin{equation*}
        C_{i,j} = 
        \begin{cases}
            (1/\widetilde{v}_1-1)S_1(1/\widetilde{v}_1),\quad &\widetilde{v}_1 > \widetilde{h}_0, \\
            (1/\widetilde{v}_1-1)\left[S_1(1/\widetilde{v}_1) + S_0(1/\widetilde{v}_1) \right],\quad &\widetilde{v}_1 = \widetilde{h}_0, \\
            (1/\widetilde{h}_0-1)S_0(1/\widetilde{h}_0),\quad &\widetilde{v}_1 < \widetilde{h}_0,
        \end{cases}  
    \end{equation*}
the functions $S_0(z)$ and $S_1(z)$ being defined by \eqref{eq: S_1(z) S_0(z)}.
\end{proposition}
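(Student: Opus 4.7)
The plan is to perform a singularity analysis of $F(i,j,z) = G(i,j,z)$ from Theorem~\ref{thm: G(i,j,z) generating function}, isolating the dominant poles nearest to the origin and applying a standard transfer theorem to read off the asymptotics of $f_n(i,j) = [z^n]G(i,j,z)$. The starting observation is that the recursion \eqref{eq: c_m d_m definition} makes the poles at $z = 1/\widetilde{v}_1$ and $z = 1/\widetilde{h}_0$ hereditary across the two branches of the sequence. Indeed, using $\widehat{v}_0 = \widehat{h}_0 = 1$, one has $d_1(z) = -(1-z)/(1-\widetilde{v}_1 z)$ and $d_0(z) = -(1-z)/(1-\widetilde{h}_0 z)$; by induction every $c_m(z), d_m(z)$ with $m \geq 1$ carries the common factor $(1-z)/(1-\widetilde{v}_1 z)$, and every $c_m(z)$ with $m \leq -1$ together with every $d_m(z)$ with $m \leq 0$ carries the common factor $(1-z)/(1-\widetilde{h}_0 z)$.

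Motivated by this, I would rearrange the series into
\begin{equation*}
G(i,j,z) = 1 + \frac{1-z}{1-\widetilde{v}_1 z}\,S_1(z) + \frac{1-z}{1-\widetilde{h}_0 z}\,S_0(z),
\end{equation*}
where $S_1(z)$ collects the positive branch and $S_0(z)$ the negative one, each after stripping the corresponding singular prefactor; this is the content of \eqref{eq: S_1(z) S_0(z)}. The crucial step is to verify that $S_1(z)$ and $S_0(z)$ extend analytically past the circle $|z| = 1/(\widetilde{v}_1 \vee \widetilde{h}_0)$. The ingredients are the geometric decay of $\alpha_m^i \beta_m^j$ from Lemma~\ref{lem: alpha_m beta_m decay}, the convergence $\widetilde{v}_k, \widehat{v}_k, \widetilde{h}_k, \widehat{h}_k \to 0$ as $|k| \to \infty$ (since $\alpha_k, \beta_k \to 0$ and, by Assumption~\ref{asm: B7}, the boundary jump laws have no mass at the origin), and the strict inequalities $\widetilde{v}_k < \widetilde{v}_1$ for $k \geq 2$, together with their analogues for $\widehat{v}_k, \widetilde{h}_k, \widehat{h}_k$. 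Combined, these yield uniform absolute convergence of the normalized series on a disk of radius strictly greater than $1/(\widetilde{v}_1 \vee \widetilde{h}_0)$.

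Once analytic extension is in hand, the conclusion follows from the standard transfer principle of singularity analysis: for $A/(1 - \rho z) + H(z)$ with $H$ analytic on a larger disk, one has $[z^n] = A\rho^n + o(\rho^n)$. Applied here this gives
\begin{equation*}
[z^n]\frac{1-z}{1-\widetilde{v}_1 z}\,S_1(z) = (1/\widetilde{v}_1 - 1)\,S_1(1/\widetilde{v}_1)\,\widetilde{v}_1^{\,n} + o(\widetilde{v}_1^{\,n}),
\end{equation*}
and analogously with $\widetilde{h}_0$ and $S_0$. Summing the two contributions and retaining the dominant exponential produces the three cases: when $\widetilde{v}_1 > \widetilde{h}_0$ only the first term survives at the rate $\widetilde{v}_1^{\,n}$, when $\widetilde{v}_1 < \widetilde{h}_0$ only the second, and when $\widetilde{v}_1 = \widetilde{h}_0$ the residues add to yield $(1/\widetilde{v}_1 - 1)[S_1 + S_0](1/\widetilde{v}_1)$, matching the claimed expressions for $C_{i,j}$.

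The main obstacle is precisely the analytic extension of $S_1(z)$ and $S_0(z)$. Each individual $c_m(z), d_m(z)$ is a rational function with finitely many poles, but the infinite series aggregates infinitely many auxiliary singularities at $1/\widetilde{v}_k, 1/\widehat{v}_k, 1/\widetilde{h}_k, 1/\widehat{h}_k$ for indices other than the two distinguished ones $k=1$ and $k=0$. Ensuring that the summed series is genuinely analytic on a disk strictly containing $\{|z| \leq 1/(\widetilde{v}_1 \vee \widetilde{h}_0)\}$ is where the exponential decay of $\alpha_m, \beta_m$ supplied by Lemma~\ref{lem: alpha_m beta_m decay}, the vanishing-at-origin hypothesis~\ref{asm: B7}, and the compensation structure of the series all play their decisive role in jointly dominating and suppressing these spurious singularities.
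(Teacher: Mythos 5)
Your overall approach coincides with the paper's: isolate the simple poles at $z = 1/\widetilde{v}_1$ and $z = 1/\widetilde{h}_0$ by using the hereditary factor structure of the $c_m(z)$ and $d_m(z)$, show that the residual series extend analytically past the smallest of these radii, and then read off the asymptotics via the transfer theorem for meromorphic functions. This is exactly what the paper does in Lemma~\ref{lem: dominant pole}, on which the one-paragraph proof of Proposition~\ref{prop: f_n asymptotic} rests.

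There is, however, a genuine gap in your proposal. The transfer theorem yields $f_n(i,j) = C_{i,j}\,(\widetilde{v}_1 \vee \widetilde{h}_0)^n + o\bigl((\widetilde{v}_1 \vee \widetilde{h}_0)^n\bigr)$, but the stated \emph{equivalence} $f_n(i,j) \sim C_{i,j}\,(\widetilde{v}_1 \vee \widetilde{h}_0)^n$ requires $C_{i,j} \neq 0$, i.e.\ $S_1(1/\widetilde{v}_1) \neq 0$ and/or $S_0(1/\widetilde{h}_0) \neq 0$ depending on the case. If the residue vanished, the pole at $1/(\widetilde{v}_1\vee\widetilde{h}_0)$ would be removable and $f_n$ would decay strictly faster than claimed. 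Your argument never rules this out. The paper spends the bulk of the proof of Lemma~\ref{lem: dominant pole} on exactly this point: it regroups $S_1(z)$ in one of two ways (depending on whether $\widetilde{v}_1 > \widetilde{h}_1$ or $\widetilde{v}_1 \leq \widetilde{h}_1$), and shows by a termwise sign analysis — using the strict monotonicity of $\{(\alpha_m,\beta_m)\}$ together with the pole-ordering inequalities \eqref{eq:inequalities_vh_0} and \eqref{eq:inequalities_vh} — that every summand of $S_1(1/\widetilde{v}_1)$ is nonnegative and at least one is strictly positive, hence $S_1(1/\widetilde{v}_1) > 0$; likewise $S_0(1/\widetilde{h}_0) > 0$. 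Without some version of this positivity argument, your proof establishes at most an upper bound, not the asserted asymptotic equivalence.

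A secondary point: for the analyticity of $S_1$ and $S_0$ on a slightly larger disk, the statement that $\widetilde{v}_k, \widehat{v}_k, \widetilde{h}_k, \widehat{h}_k \to 0$ as $|k|\to\infty$ only controls the far tails; you also need the explicit ordering inequalities \eqref{eq:inequalities_vh_0}--\eqref{eq:inequalities_vh} to verify that the finitely many auxiliary poles near $m = 0, \pm 1$ (e.g.\ $1/\widehat{h}_1$, $1/\widehat{v}_{-1}$, $1/\widetilde{v}_0$, $1/\widetilde{h}_1$, whose positions relative to $1/\widetilde{v}_1$ are not determined by the asymptotic decay alone) lie strictly outside the critical radius. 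You acknowledge this as the main obstacle, but the proposal does not supply the inequalities that close it.
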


\begin{proof}
We will show in Lemma~\ref{lem: dominant pole} that the generating function $G(i,j,z)$ has a simple dominant pole.
Proposition~\ref{prop: f_n asymptotic} then follows directly by applying singularity analysis for meromorphic functions; see, for instance, \cite[Thm~IV.10]{FlSe-09}.
The constant $C_{i,j}$ corresponds to the residue of $G(i,j,z)$ at its smallest pole, namely $1/(\widetilde{v}_1 \vee \widetilde{h}_0)$.
\end{proof}


Since $c_m(z)$ and $d_m(z)$ in \eqref{eq: c_m d_m definition} are defined as cumulative products of rational functions, the number of poles in these coefficients increases with $m$. The following result aims to clarify the ordering of these poles.


\begin{lemma}
    We have for any $m\geq 1$,
    \begin{equation*}
        1=\alpha_0=\beta_0 > \alpha_m \vee \beta_{-m} \geq \alpha_m \wedge \beta_{-m} > \alpha_{-m} \vee \beta_{m} \geq \alpha_{-m} \wedge \beta_{m} > \alpha_{m+1} \vee \beta_{-m-1}. 
    \end{equation*}
\end{lemma}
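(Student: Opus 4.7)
The plan is to run a short simultaneous induction on $m\geq 1$ based on two elementary features of the inverses $u^*$ and $v^*$ from \eqref{eq:def_ustar_vstar}. First, both $u^*$ and $v^*$ are strictly increasing on $[0,\widehat{\beta}]$ and $[0,\widetilde{\alpha}]$ respectively, by Lemma~\ref{lem: parametrization level set K}. Second, $u^*(\beta)<\beta$ for all $\beta\in(0,\widehat{\beta}]$, and $v^*(\alpha)<\alpha$ for all $\alpha\in(0,\widetilde{\alpha}]$; this is immediate from Lemma~\ref{lem: parametrization level set K}(i): writing $\alpha=u^*(\beta)\in(0,\widehat{\alpha}]\subset(0,1)$ gives $\beta=u(\alpha)>\alpha=u^*(\beta)$, and similarly for $v^*$.

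The outermost inequality $1>\alpha_m\vee\beta_{-m}$ is already contained in Lemma~\ref{lem: alpha_m beta_m decay}. The heart of the argument is the pair of cross-monotonicity claims
\begin{equation*}
\alpha_m>\alpha_{-m}\quad\text{and}\quad\beta_{-m}>\beta_m\quad\text{for all }m\geq 1,
\end{equation*}
which I would prove by simultaneous induction. For the base $m=1$, observe that $\alpha_1=u^*(1)$ while $\alpha_{-1}=u^*(\beta_{-1})=u^*(v^*(1))$; since $v^*(1)<1$ by the second property and $u^*$ is strictly increasing by the first, we get $\alpha_{-1}<\alpha_1$, and the analogous calculation with the roles of $u^*$ and $v^*$ swapped gives $\beta_1<\beta_{-1}$. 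For the inductive step, the recursion \eqref{eq: alpha_m beta_m definition} yields
\begin{equation*}
\alpha_{m+1}=u^*(\beta_m),\quad\alpha_{-m-1}=u^*(v^*(\alpha_{-m})),\quad\beta_{-m-1}=v^*(\alpha_{-m}),\quad\beta_{m+1}=v^*(u^*(\beta_m)),
\end{equation*}
and the strict monotonicity of $u^*$ and $v^*$ reduces $\alpha_{m+1}>\alpha_{-m-1}$ and $\beta_{-m-1}>\beta_{m+1}$ directly to the inductive hypotheses. As a byproduct, this also delivers $\alpha_{-m}>\alpha_{m+1}$ and $\beta_m>\beta_{-m-1}$.

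The remaining four inequalities needed to close the chain in the lemma follow at once from the second property applied to the defining identities $\beta_m=v^*(\alpha_m)$, $\alpha_{-m}=u^*(\beta_{-m})$, $\alpha_{m+1}=u^*(\beta_m)$, and $\beta_{-m-1}=v^*(\alpha_{-m})$, yielding respectively $\alpha_m>\beta_m$, $\beta_{-m}>\alpha_{-m}$, $\beta_m>\alpha_{m+1}$, and $\alpha_{-m}>\beta_{-m-1}$. Concatenating these eight strict inequalities with $1>\alpha_m\vee\beta_{-m}$ gives exactly the chain stated in the lemma.

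There is no real obstacle; the only point that requires brief verification is that each evaluation of $u^*$ and $v^*$ remains inside its domain, which holds because Lemma~\ref{lem: alpha_m beta_m decay} places all the $\alpha_m,\beta_m$ (for $m\neq 0$) in $(0,1)$, and $(0,1)\subset[0,\widehat{\beta}]\cap[0,\widetilde{\alpha}]$ since $\widehat{\alpha},\widetilde{\beta}\in(0,1)$ are the unique maximizers of $u,v$ on $[0,1]$ with $u(1)=v(1)=1$, forcing $\widehat{\beta},\widetilde{\alpha}>1$.
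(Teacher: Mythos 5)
Your proof is correct and uses essentially the same ingredients as the paper's: the strict monotonicity of $u^*$ and $v^*$ together with the contraction property $u^*(\beta)<\beta$, $v^*(\alpha)<\alpha$. The paper simply writes out the full chain for $m=1$ and asserts that the general case follows by induction, whereas you isolate the two cross-monotonicity claims $\alpha_m>\alpha_{-m}$ and $\beta_{-m}>\beta_m$ as the inductive invariant and deduce the remaining strict inequalities from the contraction property; this is a slightly cleaner packaging of the same argument.
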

\begin{proof}
    We will only show that \begin{equation*}
        1 > \alpha_1 \vee \beta_{-1} \geq \alpha_1 \wedge \beta_{-1} > \alpha_{-1} \vee \beta_{1} \geq \alpha_{-1} \wedge \beta_{1} > \alpha_{2} \vee \beta_{-2},
    \end{equation*}
    and the inequalities for $m\geq 2$ follow by an induction argument. It is readily seen that $1>\alpha_1,\beta_{-1}$. Now due to the construction and monotonicity of the functions $u^*$ and $v^*$ in \eqref{eq:def_ustar_vstar}, we have:
    \begin{align*}
        &\alpha_1 =u^*(\beta_0) > u^*(\beta_{-1}) = \alpha_{-1}\quad \text{and}\quad
        \beta_{-1} > u^*(\beta_{-1}) = \alpha_{-1},\\
        &\alpha_1 > v^*(\alpha_1) = \beta_1 \quad \text{and}\quad
        \beta_{-1} = v^*(\alpha_0) > v^*(\alpha_1) = \beta_1.
    \end{align*}
    Subsequently, we also have:
    \begin{align*}
        & \alpha_{-1} = u^*(\beta_{-1}) > u^*(\beta_{1}) = \alpha_{2} \quad\text{and}\quad
        \beta_1 > u^*(\beta_1) = \alpha_2 ,\\
        & \alpha_{-1} > v^*(\alpha_{-1}) = \beta_{-2} \quad\text{and}\quad
        \beta_1 = v^*(\alpha_{1}) > v^*(\alpha_{-1}) = \beta_{-2},
    \end{align*}
    which concludes the proof.
\end{proof}
The above lemma yields the precise location of $\widetilde{v}_1$ and $\widetilde{h}_0$ in relation to the values $\widehat{v}_m$, $\widetilde{v}_m$, $\widehat{h}_m$, and $\widetilde{h}_m$ defined in \eqref{eq:def_v_h_tilde_hat}; namely, we have
\begin{equation}
\label{eq:inequalities_vh_0}
    1 > \widetilde{v}_1 \geq \widehat{v}_1\quad \text{and}\quad 1 > \widetilde{h}_0 \geq \widehat{h}_{-1},
\end{equation}
and for $m\geq 1$,
\begin{equation}
\label{eq:inequalities_vh}
\widetilde{v}_1 > \widehat{v}_{m+1},\widehat{v}_{-m},\widetilde{v}_{m+1},\widetilde{v}_{-m}\quad\text{and}\quad 
    \widetilde{h}_0 > \widehat{h}_{m+1}, \widehat{h}_{-m-1}, \widetilde{h}_{m+1}, \widetilde{h}_{-m}.
\end{equation}
It is worth noting that, in general, the relative positions of $\widetilde{v}_1$, $\widetilde{v}_0$, $\widetilde{h}_0$, and $\widetilde{h}_1$ may differ depending on the particular model being studied.




\begin{lemma}\label{lem: dominant pole}
There exists $\varepsilon > 0$ sufficiently small such that, for any $(i,j) \in \mathbb{N}_0^2\setminus\{(0,0)\}$, the function $G(i,j,z)$ is meromorphic in the open disk 
\begin{equation*}
    \mathcal{B}_\varepsilon = \{ z \in \mathbb{C} : |z| < 1/(\widetilde{v}_1 \vee \widetilde{h}_0) + \varepsilon \},
\end{equation*}
and has a unique pole in this domain, located at $1/(\widetilde{v}_1 \vee \widetilde{h}_0)$. This pole is simple.
\end{lemma}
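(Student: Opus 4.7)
The plan is two-fold: (i) show that $G(i,j,z)$ extends meromorphically to $\mathcal{B}_\varepsilon$ with a single simple candidate pole at $1/(\widetilde{v}_1 \vee \widetilde{h}_0)$, and (ii) verify that this candidate pole is actual (non-removable).

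For (i), I would start from the product representation
\[
c_m(z) = \prod_{k=1}^m \frac{(1-\widetilde{h}_k z)(1-\widehat{v}_{k-1}z)}{(1-\widetilde{v}_k z)(1-\widehat{h}_k z)}, \qquad m \geq 1,
\]
derived in the proof of Lemma~\ref{lem: G(i,j,z) convergence}, together with its analogues for $m \leq -1$ and for $d_m$. These exhibit the singularities of each rational summand as a finite subset of $\{1/\widetilde{v}_k, 1/\widehat{v}_k, 1/\widetilde{h}_k, 1/\widehat{h}_k\}_{k\in\mathbb{Z}}$. Combining the inequalities \eqref{eq:inequalities_vh_0}--\eqref{eq:inequalities_vh} with the strict refinements $\widetilde{v}_1 > \widehat{v}_1$ and $\widetilde{h}_0 > \widehat{h}_1$ — both obtained by direct comparison using $\alpha_1,\beta_1<\beta_{-1}<1$ together with the strictly positive boundary drifts of \ref{asm: B8} — shows that every candidate pole, other than $1/\widetilde{v}_1$ and $1/\widetilde{h}_0$, has modulus strictly greater than $1/(\widetilde{v}_1 \vee \widetilde{h}_0)$. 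Shrinking $\varepsilon$, only the smaller of the two, namely $1/(\widetilde{v}_1 \vee \widetilde{h}_0)$, remains inside $\mathcal{B}_\varepsilon$. Absolute and uniform convergence of the series on compact subsets of $\mathcal{B}_\varepsilon$ avoiding that point would follow from a uniform bound $|c_m(z)|,|d_m(z)|\leq C$, itself an immediate consequence of convergence of the tail infinite product (since $\sum_k(\widetilde{v}_k+\widehat{v}_k+\widetilde{h}_k+\widehat{h}_k)<\infty$ by Lemma~\ref{lem: alpha_m beta_m decay}), combined with the geometric decay of $\alpha_m^i\beta_m^j$ valid whenever $(i,j)\neq (0,0)$. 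Since each term contributes at most a simple pole at $1/(\widetilde{v}_1 \vee \widetilde{h}_0)$, the same holds for the sum.

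For (ii), I would exploit that $G(i,j,z) = \sum_{n\geq 0} f_n(i,j) z^n$ has non-negative coefficients, so by Pringsheim's theorem the radius of convergence equals the distance from $0$ to the nearest positive-real singularity. If the candidate pole were removable, $G$ would be analytic throughout $\mathcal{B}_\varepsilon$, forcing $f_n(i,j) = O(\rho^n)$ for some $\rho < \widetilde{v}_1 \vee \widetilde{h}_0$. This would be contradicted by a probabilistic lower bound $f_n(i,j) \geq c_{i,j}(\widetilde{v}_1\vee\widetilde{h}_0 - \eta)^n$ (for any $\eta>0$, some $c_{i,j}>0$, and infinitely many $n$), which one can derive from Lemma~\ref{lem: probability from axis to axis} combined with the strong Markov property: from $(i,j)$ the walk reaches a point far out on one of the axes with positive probability, and from such a point the successive boundary contacts form, up to vanishing error, a geometric chain of rate $\widetilde{v}_1$ on the vertical axis and $\widetilde{h}_0$ on the horizontal axis.

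The main obstacle lies in the quantitative lower bound in step (ii): Lemma~\ref{lem: probability from axis to axis} provides only an asymptotic as $i+j\to\infty$, so iterating it over $n$ contacts requires careful control of the error terms uniformly in $n$, which amounts to a quantitative refinement of that lemma. A purely analytic alternative would be to compute the residue directly from the product formulas and verify its non-vanishing through a case-by-case sign analysis of the two sub-series (one from the $c_m$-terms, one from the $d_m$-terms) contributing to the residue for $m\geq 1$ in the case $\widetilde{v}_1 > \widetilde{h}_0$; this route, however, seems technically more cumbersome than the probabilistic one.
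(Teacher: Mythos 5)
Your proposal splits the lemma into (i) meromorphic continuation and (ii) non-removability of the candidate pole. Part (i) broadly matches the paper's opening moves — using the product formulas for $c_m, d_m$ and the inequalities \eqref{eq:inequalities_vh_0}--\eqref{eq:inequalities_vh} to push every other pole outside $\mathcal{B}_\varepsilon$ — but your stated inputs are slightly off: the paper only gives $\widetilde{v}_1 \geq \widehat{v}_1$ (it explicitly entertains equality, see the line ``the equality happens if $\widetilde{v}_1 = \widehat{v}_1$''), and it nowhere establishes $\widetilde{h}_0 > \widehat{h}_1$ directly. Also the claimed uniform bound $|c_m(z)|,|d_m(z)|\leq C$ cannot hold on all of $\mathcal{B}_\varepsilon$ since the $k=1$ factor of the $c_m$ product carries a pole at $1/\widetilde{v}_1$; it is the ratios $c_m/d_1$, $d_m/d_1$ (the paper's $S_1$, $R_1$) that are analytic there, and the paper achieves the requisite convergence not by a uniform-in-$m$ bound on the coefficients but by letting the exponential decay of $\alpha_m^i\beta_m^j$ absorb a geometric growth estimate. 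These are repairable but should be fixed.

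The genuine gap is in part (ii). The paper does precisely the ``cumbersome'' route you defer: it writes $G = S_1 d_1 + R_1$ (resp.\ $G = 1 + S_1 d_1 + S_0 d_0$ when $\widetilde{v}_1 = \widetilde{h}_0$), shows $S_1, R_1$ (resp.\ $S_0$) are analytic in $\mathcal{B}_\varepsilon$, and establishes $S_1(1/\widetilde{v}_1) > 0$ (resp.\ $S_0(1/\widetilde{h}_0) > 0$) by a term-by-term sign analysis split into the subcases $\widetilde{v}_1 > \widetilde{h}_1$ and $\widetilde{v}_1 \leq \widetilde{h}_1$. Your alternative — Pringsheim's theorem combined with a probabilistic lower bound $f_n(i,j) \geq c_{i,j}(\widetilde{v}_1\vee\widetilde{h}_0 - \eta)^n$ — is a reasonable idea, but as you yourself note, Lemma~\ref{lem: probability from axis to axis} only gives the one-step transition probability asymptotically as the contact point escapes to infinity, with an unquantified error. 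Iterating it over $n$ contacts requires showing that the conditional law of the contact positions stays uniformly far from the origin for all $n$, plus a summable control on the accumulated errors. Nothing in the paper provides this, and sketching it here would essentially require a quantitative strengthening of Lemma~\ref{lem: probability from axis to axis} that is comparable in difficulty to the residue computation you are trying to avoid. You also do not address the boundary case $\widetilde{v}_1 = \widetilde{h}_0$, where the residue receives contributions from both $d_1$ and $d_0$ and the paper uses a different decomposition; this case deserves explicit treatment in any approach.
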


\begin{proof}
Fix $(i,j)\in\mathbb N_0^2\setminus\{(0,0)\}$ for the rest of the proof and, without loss of generality, assume $\widetilde{v}_1 \geq \widetilde{h}_0$. Choose $\varepsilon>0$ so small that $(1/\widetilde{v}_1+\varepsilon)^{-1}$ exceeds all the values $\widehat{v}_m$, $\widehat{h}_m$, $\widetilde{v}_m$, and $\widetilde{h}_m$ cited in \eqref{eq:inequalities_vh_0} and \eqref{eq:inequalities_vh}.

We begin with the case $\widetilde{v}_1 > \widetilde{h}_0$. Since $1/\widetilde{v}_1$ is the unique pole of $d_1(z)=-\frac{1-z}{1-\widetilde{v}_1 z}$, we can use \eqref{eq:def_function_G} together with \eqref{eq: c_m d_m definition} to rewrite $G(i,j,z)$ as
\begin{equation*}
	G(i,j,z) = S_1(z) d_1 (z) + R_1(z),
\end{equation*}
where
\begin{equation*}
	\left\{\begin{array}{rcl}
    S_1(z) &=& \displaystyle \sum_{m\geq 1} \left( \frac{d_m(z)}{d_1 (z)}\alpha_m^i\beta_{m-1}^j +\frac{c_m(z)}{d_1 (z)}\alpha_m^i\beta_{m}^j  \right), \\
     R_1 (z) &=& \displaystyle \sum_{m\leq 0} \left( d_m(z)\alpha_m^i\beta_{m-1}^j + c_m(z)\alpha_m^i\beta_{m}^j  \right).
     \end{array}\right.
\end{equation*}
It suffices to show that $S_1(z)$ and $R_1(z)$ are analytic on $\mathcal{B}_\varepsilon$, and $S_1(1/\widetilde{v}_1)\not=0$. Observe  with \eqref{eq:recursive_bis_cd} that for $m\geq 2$, $d_m(z)/d_1(z)$ can be written as 
\begin{equation*}
    \frac{d_m(z)}{d_1(z)} = \prod_{k=1}^{m-1}
    \frac{1-\widetilde{h}_k z}{1-\widehat{h}_k z}
    \frac{1-\widehat{v}_k z}{1-\widetilde{v}_{k+1}z},
\end{equation*}
of which all the poles are greater than $1/\widetilde{v}_1$. Hence, any $d_m(z)/d_1(z)$ with $m\geq 1$ is analytic on the disk $\mathcal{B}_\varepsilon$. By the same reasoning as in the proof of Lemma~\ref{lem: G(i,j,z) convergence}, we deduce that the series $\sum_{m\geq 1}\frac{d_m(z)}{d_1 (z)}\alpha_m^i\beta_{m-1}^j$ is analytic on $\mathcal{B}_\varepsilon$. Similarly, we conclude that the series $\sum_{m\geq 1}\frac{c_m(z)}{d_1 (z)}\alpha_m^i\beta_{m}^j$, $\sum_{m\leq 0}d_m(z)\alpha_m^i\beta_{m-1}^j$, and $\sum_{m\leq 0}c_m(z)\alpha_m^i\beta_{m}^j$ are all analytic on $\mathcal{B}_\varepsilon$, and hence, so are $S_1(z)$ and $R_1(z)$.

We now show that $S_1(1/\widetilde{v}_1) \not=0$. In the subcase $\widetilde{v}_1 > \widetilde{h}_1$, let us reformulate $S_1(z)$ as
\begin{equation*}
    S_1(z) = \sum_{m\geq 1}  \frac{d_m(z)}{d_1 (z)}\alpha_m^i  \left( \beta_{m-1}^j +\frac{c_m(z)}{d_m(z)}\beta_{m}^j \right) = \sum_{m\geq 1}  \frac{d_m(z)}{d_1 (z)}\alpha_m^i  \left( \beta_{m-1}^j - \frac{1-\widetilde{h}_m z}{1-\widehat{h}_m z}\beta_{m}^j \right).
\end{equation*}
Since
\begin{equation*}
    \frac{d_m(1/\widetilde{v}_1)}{d_1(1/\widetilde{v}_1)} \geq 0,\quad m\geq 2,
\end{equation*}
where the equality happens if $\widetilde{v}_1 = \widehat{v}_1$, and
\begin{equation*}
    \alpha_m^i  \left( \beta_{m-1}^j - \frac{1-\widetilde{h}_m /\widetilde{v}_1}{1-\widehat{h}_m /\widetilde{v}_1}\beta_{m}^j \right) > 0,\quad m\geq 1,
\end{equation*}
then $S_1(1/\widetilde{v}_1) >0$. Now in the subcase $\widetilde{v}_1 \leq \widetilde{h}_1$, we reformulate $S_1(z)$ as
\begin{align*}
    S_1(z)&=\alpha_1^i +\sum_{m\geq 1} \left( \frac{c_m(z)}{d_1(z)}  \alpha_m^i \beta_m^j + \frac{d_{m+1}(z)}{d_1(z)}  \alpha_{m+1}^i \beta_m^j \right) \\
    &= \alpha_1^i +\sum_{m\geq 1} \frac{c_m(z)}{d_1(z)} \beta_m^j \left(  \alpha_m^i  - \frac{1-\widehat{v}_m z}{1-\widetilde{v}_{m+1}z}  \alpha_{m+1}^i  \right).
\end{align*}
Since
\begin{equation*}
    \frac{c_1(\widetilde{v}_1)}{d_1(\widetilde{v}_1)} = -\frac{1-\widetilde{h}_1/\widetilde{v}_1}{1-\widehat{h}_1/\widetilde{v}_1} \geq 0,
\end{equation*}
then
\begin{equation*}
    \frac{c_m(1/\widetilde{v}_1)}{d_1(1/\widetilde{v}_1)} = \frac{c_1(1/\widetilde{v}_1)}{d_1(1/\widetilde{v}_1)} \prod_{k=2}^m \frac{1 - \widehat{v}_{k-1}/\widetilde{v}_1}{1-\widetilde{v}_k /\widetilde{v}_1} \frac{1-\widetilde{h}_k /\widetilde{v}_1 }{1 - \widehat{h}_k /\widetilde{v}_1} \geq 0,\quad m\geq 2.
\end{equation*}
Together with the fact that
\begin{equation*}
    \beta_m^j \left(  \alpha_m^i  - \frac{1-\widehat{v}_m /\widetilde{v}_1}{1-\widetilde{v}_{m+1}/\widetilde{v}_1}  \alpha_{m+1}^i  \right) > 0,\quad m\geq 1,
\end{equation*}
we deduce that $S_1(1/\widetilde{v}_1) >0$, which concludes the case $\widetilde{v}_1 > \widetilde{h}_0$.

We now move to the case $\widetilde{v}_1 = \widetilde{h}_0$. Noting that $1/\widetilde{v}_1$ is the unique simple pole of $d_0(z)=d_1(z)=-\frac{1-z}{1-\widetilde{v}_1z}$, we rewrite $G(i,j,z)$ as
\begin{equation*}
	G(i,j,z)=1 + S_1(z)d_1(z) + S_0(z)d_0(z),
\end{equation*}
where
\begin{equation}\label{eq: S_1(z) S_0(z)}
    \begin{cases}\displaystyle
    S_1(z) = \sum_{m\geq 1} \left( \frac{d_m(z)}{d_1 (z)}\alpha_m^i\beta_{m-1}^j +\frac{c_m(z)}{d_1 (z)}\alpha_m^i\beta_{m}^j  \right),\\ \displaystyle
	S_0(z) = \sum_{m\leq 0 } \left( \frac{d_m(z)}{d_0 (z)}\alpha_m^i\beta_{m-1}^j +\frac{c_{m-1}(z)}{d_0 (z)}\alpha_{m-1}^i\beta_{m-1}^j  \right).
    \end{cases}
\end{equation}
By similar arguments as the preceding case, $S_1(z)$ and $S_0(z)$ are analytic on $\mathcal{B}_\varepsilon$, and $S_1(1/\widetilde{v}_1) > 0$, $S_0(1/\widetilde{h}_0) > 0$. Therefore, the lemma holds in this case as well, which completes the proof.
\end{proof}

We observe that if $1/\widetilde{v}_1$ and $1/\widetilde{h}_0$ are indeed the two smallest simple poles of $G(i,j,z)$ (more precisely, if $\widetilde{v}_1\wedge\widetilde{h}_0 > \widehat{h}_1\vee\widehat{v}_{-1}$), then the asymptotics~\eqref{eq: f_n asymptotic in n} can be rewritten as
\begin{equation*}
f_n(i,j) = (1-\widetilde{v}_1)S_1\left(\frac{1}{\widetilde{v}_1}\right)\widetilde{v}_1^{n-1} + (1-\widetilde{h}_0)S_0\left(\frac{1}{\widetilde{h}_0}\right)\widetilde{h}_0^{n-1} + o\left( ( \widetilde{v}_1 \wedge \widetilde{h}_0 )^{n}\right) ,
\end{equation*}
as $n\to\infty$, for any $(i,j)\in\mathbb{N}_0^2\setminus\{(0,0)\}$. This form closely resembles the asymptotic behavior given in~\eqref{eq: f_n asymptotic behavior} for $f_n(i,j)$ as $i+j\to \infty$. On the other hand, in situations where the drift of the walk (both in the interior and on the boundary of the quadrant) is strongly asymmetric, the ordering of the first few smallest poles may differ significantly across various regimes of transition probabilities. In principle, more refined analyses could yield sharper asymptotic estimates.

\section{Various examples}
\label{sec:examples}

In this section, we explore several examples, particularly those that lie at the intersection of the two models studied in this work (Section~\ref{sec:model-1} and Section~\ref{sec:model-2}). These examples demonstrate the versatility of the compensation approach for both singular and non-singular random walks.


 \subsection{Singular walks with same horizontal and vertical reflections}

We begin with the top-left model in Figure~\ref{fig: model examples}.
\begin{proposition}\label{prop: f_n closed form}
    Assume~\ref{asm: B1}--\ref{asm: B8} and $v_{k,\ell} = h_{k,\ell} =q_{k,\ell} $ for any $k,\ell$. Then, for any $(i,j)\in\mathbb N_0^2$, the generating function $F(i,j,z)$ of $Z_{(i,j)}$ has the expression
    \begin{equation}\label{eq: G(i,j,z) reduced case}
        F(i,j,z) = \sum_{m\in\mathbb{Z}} \left(\frac{1-z}{1-z \sum_{k,\ell}q_{k,\ell}\alpha_m^k \beta_m^\ell} \alpha_m^i\beta_m^j - \frac{1-z}{1- z\sum_{k,\ell}q_{k,\ell}\alpha_{m+1}^k \beta_m^\ell} \alpha_{m+1}^i\beta_m^j\right).
    \end{equation}
    As a consequence, the functions $f_n$ admit the closed-form expressions
    \begin{align*}
        f_0 (i,j) ={}& \sum_{m\in\mathbb{Z}} \left( \alpha_m^i\beta_m^j - \alpha_{m+1}^i\beta_m^j\right), \\
        f_n (i,j) ={}& \sum_{m\in\mathbb{Z}} \left[-\bigl(1 - \sum_{k,\ell}q_{k,\ell}\alpha_m^k \beta_m^\ell\bigr) \bigl(\sum_{k,\ell}q_{k,\ell}\alpha_m^k \beta_m^\ell \bigr)^{n-1}\alpha_m^i\beta_m^j \right.\\
        & \quad\quad\quad\quad\left.+ \bigl(1 - \sum_{k,\ell}q_{k,\ell}\alpha_{m+1}^k \beta_m^\ell\bigr) \bigl(\sum_{k,\ell}q_{k,\ell}\alpha_{m+1}^k \beta_m^\ell \bigr)^{n-1} \alpha_{m+1}^i\beta_m^j \right],\quad n\geq 1.
    \end{align*}
\end{proposition}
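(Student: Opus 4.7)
The plan is to deduce Proposition~\ref{prop: f_n closed form} directly from Theorem~\ref{thm: G(i,j,z) generating function} by exploiting the large cancellations that occur in the recursion \eqref{eq: c_m d_m definition} when $v_{k,\ell}=h_{k,\ell}=q_{k,\ell}$. The key observation is that under this assumption the quantities introduced in \eqref{eq:def_v_h_tilde_hat} satisfy $\widehat{v}_m = \widehat{h}_m$ and $\widetilde{v}_m = \widetilde{h}_m$ for every $m\in\mathbb{Z}$. In particular, using $\alpha_0=\beta_0=1$ together with the normalization \ref{asm: B1}, we have the crucial initial value $\widehat{v}_0 = \widehat{h}_0 = \sum_{k,\ell} q_{k,\ell} = 1$.

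Next I would compute $c_m(z)$ and $d_m(z)$ in closed form by induction, using the compact form \eqref{eq:recursive_bis_cd} of the recursion. Starting from $c_0(z)=1$, the identity $\widehat{v}_0=1$ gives $d_1(z)=-(1-z)/(1-\widetilde{v}_1 z)$, and then $c_1(z)=-d_1(z)(1-\widetilde{v}_1 z)/(1-\widehat{v}_1 z)=(1-z)/(1-\widehat{v}_1 z)$ by a direct cancellation of $(1-\widetilde{v}_1 z)$. Iterating this argument forward, and using the inverted recursions for going backward (so that $d_0(z)=-(1-z)/(1-\widetilde{v}_0 z)$, $c_{-1}(z)=(1-z)/(1-\widehat{v}_{-1} z)$, and so on), I expect to establish for every $m\in\mathbb{Z}$
\begin{equation*}
c_m(z) = \frac{1-z}{1-\widehat{v}_m z} = \frac{1-z}{1-z\sum_{k,\ell} q_{k,\ell}\alpha_m^k\beta_m^\ell}, \qquad d_m(z) = -\frac{1-z}{1-\widetilde{v}_m z} = -\frac{1-z}{1-z\sum_{k,\ell} q_{k,\ell}\alpha_m^k\beta_{m-1}^\ell}.
\end{equation*}
Substituting these expressions into \eqref{eq:def_function_G} and invoking Theorem~\ref{thm: G(i,j,z) generating function} yields formula \eqref{eq: G(i,j,z) reduced case}.

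For the second part, I would read off the coefficients $f_n(i,j)$ of the power series expansion of $F(i,j,z)$ at $z=0$. Using the elementary identity $(1-z)/(1-az) = 1 - (1-a)\sum_{n\geq 1} a^{n-1} z^n$ applied to each summand, the coefficient of $z^0$ is $\sum_m(\alpha_m^i\beta_m^j - \alpha_{m+1}^i\beta_m^j)$, giving the stated expression for $f_0(i,j)$, while for $n\geq 1$ the coefficient is the stated combination of geometric-type terms. The interchange of the sums over $m$ and over $n$ is justified by the absolute convergence established in Lemma~\ref{lem: G(i,j,z) convergence}, together with the exponential decay of $\alpha_{\pm m}$ and $\beta_{\pm m}$ from Lemma~\ref{lem: alpha_m beta_m decay} (which forces $\widehat{v}_m,\widetilde{v}_m\to 0$ as $|m|\to\infty$ thanks to \ref{asm: B7}).

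The argument is almost entirely algebraic, so there is no serious obstacle; the only point requiring a small amount of care is the bookkeeping in the induction (tracking both the forward direction $m\to m+1$ via the $d$-recursion and the backward direction $m\to m-1$ via the $c$-recursion, with the common anchor $c_0=1$), and checking that the telescoping cancellation $(1-\widetilde{v}_m z)/(1-\widehat{v}_m z)\cdot(1-\widehat{v}_m z)/(1-\widetilde{v}_{m+1}z)$ really does reduce the product forms of $c_m$ and $d_m$ to the single-pole form above, rather than accumulating extra factors as in the generic setting of Lemma~\ref{lem: dominant pole}.
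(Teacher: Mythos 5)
Your proof is correct and follows essentially the same route as the paper: under the symmetry assumption $v_{k,\ell}=h_{k,\ell}=q_{k,\ell}$, the cumulative products in \eqref{eq: c_m d_m definition} telescope to the single-pole forms $c_m(z)=(1-z)/(1-\widehat{v}_m z)$ and $d_m(z)=-(1-z)/(1-\widetilde{v}_m z)$, after which Taylor expansion in $z$ gives the stated formulas for $f_n$. You merely spell out the two-sided induction anchored at $c_0=1$ (forward via the $d_{m+1}$ recursion, backward via the $c_{m-1}$ recursion) and the coefficient extraction in more detail than the paper's terse proof.
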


Since Assumptions~\ref{asm: A1}--\ref{asm: A6} hold for the model, it is straightforward to verify that the above expressions for $f_n$ satisfy the recurrence relation stated in Theorem~\ref{thm: f_n(x) recurrence relation}.

\begin{proof}
    Since $v_{k,\ell} = h_{k,\ell} =q_{k,\ell} $ for any $k,\ell$, then for any $m\in\mathbb{Z}$,
    \begin{equation*}
        \widehat{v}_m = \widehat{h}_m = \sum_{k,\ell} q_{k,\ell}\alpha_m^k\beta_m^\ell\quad\text{and}\quad
        \widetilde{v}_m = \widetilde{h}_m = \sum_{k,\ell} q_{k,\ell}\alpha_m^k\beta_{m-1}^\ell.
    \end{equation*}
    Accordingly, the coefficients $c_m(z)$ and $d_m(z)$, defined through cumulative products in \eqref{eq: c_m d_m definition}, simplify to the expressions
    \begin{equation*}
        c_m(z) = \frac{1-z}{1-z \sum_{k,\ell}q_{k,\ell}\alpha_m^k \beta_m^\ell}\quad \text{and}\quad
        d_m(z) = - \frac{1-z}{1-z \sum_{k,\ell}q_{k,\ell}\alpha_m^k \beta_{m-1}^\ell},
    \end{equation*}
    which yields the simplified form \eqref{eq: G(i,j,z) reduced case} for the generating function introduced in \eqref{eq:def_function_G}. The closed-form expressions for $f_n$ then follow directly from the Taylor expansion of $F(i,j,z)$ in $z$.
\end{proof}

\begin{figure}
    \centering
    \begin{tikzpicture}[scale=0.8,
    x=5.5mm,
    y=5.5mm,
  ]
\vspace{2cm}
     \draw [fill=white,gray!20,draw opacity=1] (-3,-3) rectangle (6,-1);
      \draw [fill=white,gray!20,draw opacity=1] (-3,-3) rectangle (-1,6);
    \draw[thin]
      \foreach \x in {-3, ..., 6} {
        (\x, -3) -- (\x, 6)
      }
      \foreach \y in {-3, ..., 6} {
        (-3, \y) -- (6, \y)
      }
    ;  

    \begin{scope}[
      semithick,
      ->,
      >={Stealth[]},
    ]
   
   \draw[->,thick,blue] (3,3) -- (4,4);
   \draw[->,thick,blue] (3,3) -- (2,4);
   \draw[->,thick,blue] (3,3) -- (4,2);
      \draw[->,thick,blue] (3,3) -- (5,3);
       \draw[->,thick,blue] (3,3) -- (4,5);
   
   \draw[->,thick,red] (1,-1) -- (1,0);
    \draw[->,thick,red] (1,-1) -- (2,1);
    \draw[->,thick,red] (1,-1) -- (4,0);
    
     \draw[->,thick,red] (-1,1) -- (-1,2);
    \draw[->,thick,red] (-1,1) -- (0,3);
    \draw[->,thick,red] (-1,1) -- (2,2);
    
      \draw (-1, -3.5) -- (-1, 6.5);
      \draw (-3.5,-1) -- (6.5, -1);
      \filldraw[black] (0,0) circle (0pt);
      \node at (0,-1.5) {\textcolor{red}{$v_{k,\ell}=h_{k,\ell}=q_{k,\ell}$}};
      \filldraw[black] (4.9,4.5) circle (0pt) node[]{\textcolor{blue}{$p_{k,\ell}$}};
    \end{scope}
    \end{tikzpicture}\qquad
    \begin{tikzpicture}[scale=0.8,
    x=5.5mm,
    y=5.5mm,
  ]
\vspace{2cm}
     \draw [fill=white,gray!20,draw opacity=1] (-3,-3) rectangle (6,-1);
      \draw [fill=white,gray!20,draw opacity=1] (-3,-3) rectangle (-1,6);
    \draw[thin]
      \foreach \x in {-3, ..., 6} {
        (\x, -3) -- (\x, 6)
      }
      \foreach \y in {-3, ..., 6} {
        (-3, \y) -- (6, \y)
      }
    ;  

    \begin{scope}[
      semithick,
      ->,
      >={Stealth[]},
    ]
   
   \draw[->,thick,blue] (3,3) -- (4,4) node[above right] {$1/3$};
   \draw[->,thick,blue] (3,3) -- (2,4) node[above] {$1/3$};
   \draw[->,thick,blue] (3,3) -- (4,2) node[right] {$1/3$};
   
   \draw[->,thick,red] (1,-1) -- (2,0) node[right] {$1$};
    
     \draw[->,thick,red] (-1,1) -- (0,2) node[right] {$1$};
    
      \draw (-1, -3.5) -- (-1, 6.5);
      \draw (-3.5,-1) -- (6.5, -1);

    \end{scope}
    \end{tikzpicture}
    
       \begin{tikzpicture}[scale=0.8,
    x=5.5mm,
    y=5.5mm,
  ]
\vspace{2cm}
     \draw [fill=white,gray!20,draw opacity=1] (-3,-3) rectangle (6,-1);
      \draw [fill=white,gray!20,draw opacity=1] (-3,-3) rectangle (-1,6);
    \draw[thin]
      \foreach \x in {-3, ..., 6} {
        (\x, -3) -- (\x, 6)
      }
      \foreach \y in {-3, ..., 6} {
        (-3, \y) -- (6, \y)
      }
    ;  

    \begin{scope}[
      semithick,
      ->,
      >={Stealth[]},
    ]
   
   \draw[->,thick,blue] (3,3) -- (4,4) node[above right] {$1/3$};
   \draw[->,thick,blue] (3,3) -- (2,4) node[above] {$1/3$};
   \draw[->,thick,blue] (3,3) -- (4,2) node[right] {$1/3$};
   
   \draw[->,thick,red] (1,-1) -- (2,0) node[right] {$1$};
    
     \draw[->,thick,red] (-1,2) -- (0,2) node[right] {$1$};
    
      \draw (-1, -3.5) -- (-1, 6.5);
      \draw (-3.5,-1) -- (6.5, -1);

    \end{scope}
    \end{tikzpicture}\qquad
        \begin{tikzpicture}[scale=0.8,
    x=5.5mm,
    y=5.5mm,
  ]
\vspace{2cm}
     \draw [fill=white,gray!20,draw opacity=1] (-3,-3) rectangle (6,-1);
      \draw [fill=white,gray!20,draw opacity=1] (-3,-3) rectangle (-1,6);
    \draw[thin]
      \foreach \x in {-3, ..., 6} {
        (\x, -3) -- (\x, 6)
      }
      \foreach \y in {-3, ..., 6} {
        (-3, \y) -- (6, \y)
      }
    ;  

    \begin{scope}[
      semithick,
      ->,
      >={Stealth[]},
    ]
   
   \draw[->,thick,blue] (3,3) -- (3,4) node[above] {$3/8$};
   \draw[->,thick,blue] (3,3) -- (4,3) node[right] {$3/8$};
   \draw[->,thick,blue] (3,3) -- (3,2) node[below] {$1/8$};
   \draw[->,thick,blue] (3,3) -- (2,3) node[left] {$1/8$};
   
   \draw[->,thick,red] (1,-1) -- (2,0) node[right] {$1$};
    
     \draw[->,thick,red] (-1,1) -- (0,2) node[right] {$1$};
    
      \draw (-1, -3.5) -- (-1, 6.5);
      \draw (-3.5,-1) -- (6.5, -1);

    \end{scope}
    \end{tikzpicture}
    \caption{Four examples considered in Section~\ref{sec:examples}}
    \label{fig: model examples}
\end{figure}

\begin{figure}[t]
    \centering
    \includegraphics[width=0.3\linewidth]{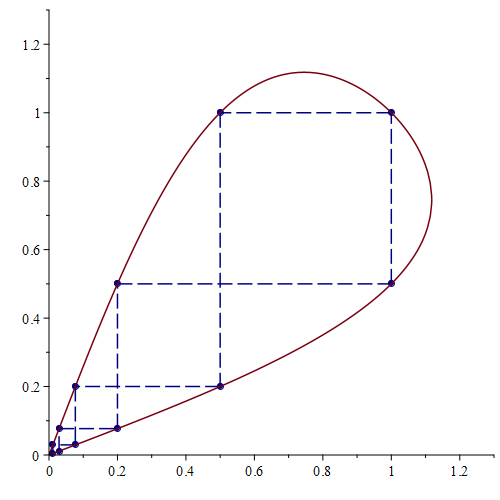}\qquad
    \includegraphics[width=0.3\linewidth]{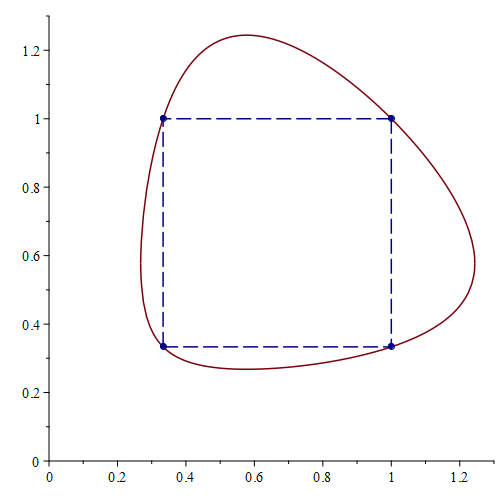}
    \caption{Construction of the sequence $\{(\alpha_m,\beta_m)\}_{m\in\mathbb{Z}}$ in the model $p_{1,1}=p_{1,-1}=p_{-1,1}=\tfrac{1}{3}$ (left) and the model $p_{1,0} = p_{0,1} = \tfrac{3}{8}$, $p_{-1,0} = p_{0,-1} = \tfrac{1}{8}$ (right).}
    \label{fig: alpha_m, beta_m examples}
\end{figure}

We illustrate Proposition~\ref{prop: f_n closed form} with a concrete example. Consider the random walk with weights $p_{-1,1} = p_{1,-1} = p_{1,1} = \frac{1}{3}$ and deterministic reflections given by $v_{1,1} = h_{1,1} = q_{1,1} = 1$; this corresponds to the top-right model in Figure~\ref{fig: model examples}. As shown in \cite{HoRaTa-23}, in this setting the sequence $\{1/\alpha_m, 1/\beta_m\}_{m \in \mathbb{Z}}$ forms a subsequence of the Fibonacci numbers, with the closed-form expression
\begin{equation*}
    (\alpha_m,\beta_m) = \left( \frac{\sqrt{5}}{\rho^{4m-1}+\rho^{-4m+1}}, \frac{\sqrt{5}}{\rho^{4m+1}+\rho^{-4m-1}} \right),\quad \rho=\frac{1+\sqrt{5}}{2},
\end{equation*}
see also Figure~\ref{fig: alpha_m, beta_m examples} (left). More concretely, $\{1/\alpha_m,1/\beta_m\}_{m\in\mathbb{Z}}$ includes the values 
\begin{equation*}
 1,2,5,13,34,89,233,610,1597,\ldots
\end{equation*}
(not necessary in the order indexed by $m$). Applying Proposition~\ref{prop: f_n closed form}, we thus obtain the following expression for the generating function of $Z_{i,j}$:
\begin{multline*}
    F(i,j,z) = \ldots - \frac{1-z}{1-z/65}\cdot \frac{1}{5^i 13^j} + \frac{1-z}{1-z/10}\cdot\frac{1}{5^i 2^j} - \frac{1-z}{1-z/2}\cdot\frac{1}{2^j} \\
     + 1 - \frac{1-z}{1-z/2}\cdot\frac{1}{2^i} + \frac{1-z}{1-z/10}\cdot\frac{1}{2^i 5^j} - \frac{1-z}{1-z/65}\cdot \frac{1}{13^i 5^j} + \ldots,
\end{multline*}
and the expressions for $f_n(i,j)$ as
\begin{align*}
    & f_0(i,j) =  \ldots- \frac{1}{5^i 13^j} + \frac{1}{5^i 2^j} - \frac{1}{2^j} + 1 - \frac{1}{2^i} + \frac{1}{2^i 5^j} - \frac{1}{13^i 5^j} + \ldots ,\\
    & f_n (i,j) = \ldots + \frac{64}{65^{n} 5^i 13^j} - \frac{9}{10^n 5^i 2^j} + \frac{1}{2^n 2^j} + \frac{1}{2^n2^i}  - \frac{9}{10^n 2^i 5^j} + \frac{64}{65^{n} 13^i 5^j} - \ldots,\quad n\geq 1.
\end{align*}

To highlight the constrast in the generating function expressions between models with identical reflections and those with arbitrary reflections, we consider the walk with the weights $ p_{-1,1}=p_{1,-1}=p_{1,1}=\frac{1}{3}$, as in the preceding example, but with reflection probabilities given by $v_{1,0}=h_{1,1}=1$ (bottom-left model in Figure~\ref{fig: model examples}). Then the generating function of $Z_{i,j}$ for this random walk is
\begin{multline*}
    F(i,j,z) = \ldots -  \frac{1-z}{1-\dfrac{z}{2}}\cdot\frac{1-z}{1-\dfrac{z}{5}}\cdot\frac{1-\dfrac{z}{10}}{1-\dfrac{z}{65}}\cdot \frac{1}{5^i 13^j}  + \frac{1-z}{1-\dfrac{z}{2}}\cdot\frac{1-z}{1-\dfrac{z}{5}}\cdot \frac{1}{5^i 2^j}- \frac{1-z}{1-\dfrac{z}{2}}\cdot\frac{1}{2^j}  \\
     +1- \frac{1-z}{1-\dfrac{z}{2}}\cdot\frac{1}{2^i} +\frac{1-z}{1-\dfrac{z}{2}} \cdot\frac{1-\dfrac{z}{2}}{1-\dfrac{z}{10}} \cdot \frac{1}{2^i 5^j} - \frac{1-z}{1-\dfrac{z}{2}} \cdot\frac{1-\dfrac{z}{2}}{1-\dfrac{z}{10}} \cdot\frac{1-\dfrac{z}{2}}{1-\dfrac{z}{13}} \cdot \frac{1}{13^i 5^j} + \ldots
\end{multline*}
In the latter case, the cumulative products do not simplify.

\subsection{Random walks with finite group and identical reflections}

We now adapt the analysis from Section~\ref{sec:model-2} to an example in which the random walk is not singular and exhibits identical horizontal and vertical reflections. Specifically, we consider a biased simple random walk with transition weights given by $p_{1,0} = p_{0,1} = \tfrac{3}{8}$ and $p_{-1,0} = p_{0,-1} = \tfrac{1}{8}$, and where the reflection probabilities satisfy $v_{k,\ell} = h_{k,\ell} = q_{k,\ell}$. Define $\alpha_0=\beta_0=1$ and $\alpha_1=\beta_1=\frac{1}{3}$.

\begin{proposition}\label{prop: f_n finite sum}
    For any $(i,j)\in\mathbb N_0^2$, we have:
    \begin{align*}
        f_0 (i,j) = {}& \alpha_0^i\beta_0^j -\alpha_1^i\beta_0^j + \alpha_1^i\beta_1^j -\alpha_0^i\beta_1^j, \\
        f_n (i,j) = {}&\bigl(1 - \sum_{k,\ell}q_{k,\ell}\alpha_1^k \beta_0^\ell\bigr) \bigl(\sum_{k,\ell}q_{k,\ell}\alpha_1^k \beta_0^\ell \bigr)^{n-1}\alpha_1^i\beta_0^j \\
        &- \bigl(1 - \sum_{k,\ell}q_{k,\ell}\alpha_1^k \beta_1^\ell\bigr) \bigl(\sum_{k,\ell}q_{k,\ell}\alpha_1^k \beta_1^\ell \bigr)^{n-1}\alpha_1^i\beta_1^j \\
        & + \bigl(1 - \sum_{k,\ell}q_{k,\ell}\alpha_0^k \beta_1^\ell\bigr) \bigl(\sum_{k,\ell}q_{k,\ell}\alpha_0^k \beta_1^\ell \bigr)^{n-1}\alpha_0^i\beta_1^j,\quad n\geq 1.
    \end{align*}
The generating function $F(i,j,z)$ of $Z_{i,j}$ is therefore given by~\eqref{eq: G(i,j,z) finite sum}.
\end{proposition}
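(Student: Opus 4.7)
The plan is to apply Theorem~\ref{thm: f_n(x) recurrence relation}, which is available here because the model satisfies Assumptions~\ref{asm: A1}--\ref{asm: A6}: the interior drift is positive in each coordinate, the jumps $(-1,0)$ and $(0,-1)$ are both present, and the horizontal and vertical reflections coincide by hypothesis. The strategy proceeds in two stages: first identify $f_0$ in closed form, then propagate the expression to $f_n$ for all $n \geq 1$ via the recurrence.

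\textbf{Step 1 (closed form for $f_0$).} I would start by observing that $\alpha_1 = \beta_1 = 1/3$ is the unique root in $(0,1)$ of the equation in \eqref{eq: alpha_1, beta_-1 equations}. A direct calculation exploiting the symmetric form of the simple-walk increments shows that each of the four pairs $(\alpha_0,\beta_0)$, $(\alpha_1,\beta_0)$, $(\alpha_0,\beta_1)$, $(\alpha_1,\beta_1)$ satisfies $\sum_{k,\ell} p_{k,\ell}\alpha^k\beta^\ell = 1$, so the corresponding monomials $\alpha^i\beta^j$ are harmonic in the interior with respect to the step kernel. Consequently, the combination
\begin{equation*}
f_0^*(i,j) := \alpha_0^i\beta_0^j - \alpha_1^i\beta_0^j + \alpha_1^i\beta_1^j - \alpha_0^i\beta_1^j = (1 - \alpha_1^i)(1 - \beta_1^j)
\end{equation*}
is harmonic on $\mathbb{N}^2$, vanishes on the boundary axes, and tends to $1$ as $i + j \to \infty$. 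The bounds \eqref{eq: f_0 upper lower} from Section~\ref{sec:model-1} force $f_0(i,j) \to 1$ at infinity as well, so the discrete maximum principle applied to the bounded harmonic difference $f_0^* - f_0$ (which vanishes on the boundary and at infinity) yields $f_0 = f_0^*$.

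\textbf{Step 2 (induction on $n$).} Any product-form $\alpha^i\beta^j$ is an eigenfunction of the averaging operator $g \mapsto \mathbf{E}[g(\cdot + \eta)]$ with eigenvalue $\sum_{k,\ell} q_{k,\ell}\alpha^k\beta^\ell$. Denoting the three nontrivial eigenvalues by $A = \sum q_{k,\ell}\alpha_1^k\beta_0^\ell$, $B = \sum q_{k,\ell}\alpha_0^k\beta_1^\ell$, $C = \sum q_{k,\ell}\alpha_1^k\beta_1^\ell$ (the fourth, corresponding to $(\alpha_0,\beta_0)$, equals $1$), the first identity of \eqref{eq: f_n(x) recurrence relation} applied to $f_0$ gives
\begin{equation*}
f_1(i,j) = (1-A)\alpha_1^i\beta_0^j - (1-C)\alpha_1^i\beta_1^j + (1-B)\alpha_0^i\beta_1^j,
\end{equation*}
matching the stated formula at $n=1$. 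A straightforward induction based on $f_{n+1}(x) = \mathbf{E}[f_n(x+\eta)]$ then multiplies each surviving term by its eigenvalue at each step, yielding the general formula for $n \geq 1$. Summing the geometric series $\sum_{n\geq 1}\lambda^{n-1}z^n = z/(1 - \lambda z)$ for each of $A$, $B$, $C$ produces the generating function \eqref{eq: G(i,j,z) finite sum}.

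The main obstacle is Step~1: although the factorization $f_0 = (1-\alpha_1^i)(1-\beta_1^j)$ looks natural, the two coordinates of the walk are not independent (only one moves per step), so the identity cannot be obtained by a direct probabilistic product argument. Instead, it is forced by uniqueness of a bounded harmonic function with the prescribed boundary values and asymptotic behavior, using the one-dimensional tail estimates from Proposition~\ref{prop:lower_upper}. Once this base case is settled, Step~2 is essentially formal, and the collapse to a finite sum reflects the fact that the orbit of $(1,1)$ under the kernel's natural symmetries consists of only four points for this model.
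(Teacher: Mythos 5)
Your proof is correct and, in its broad outline, parallels the paper's: both arguments hinge on Theorem~\ref{thm: f_n(x) recurrence relation} and on having the closed form for $f_0$. The main divergence is in how $f_0$ is obtained. The paper simply cites the explicit formula from \cite[Cor.~8]{KuRa-11}, whereas you rederive it from first principles by observing that the four monomials $\alpha_m^i\beta_m^j$ are harmonic (since each pair satisfies $\sum_{k,\ell}p_{k,\ell}\alpha^k\beta^\ell=1$), noting that the alternating sum $f_0^*=(1-\alpha_1^i)(1-\beta_1^j)$ vanishes on the axes, and then invoking the discrete maximum principle for the bounded harmonic difference $f_0-f_0^*$. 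That step is sound: the bounds~\eqref{eq: f_0 upper lower} give $|f_0-f_0^*|\leq \alpha_1^i\wedge\beta_1^j=(1/3)^{\max(i,j)}\to 0$ as $i+j\to\infty$ in the interior, so the difference is a bounded harmonic function vanishing on the boundary and at infinity, hence zero. This makes your Step~1 more self-contained (it avoids the external reference and fits the same maximum-principle pattern the paper uses in the proof of Theorem~\ref{thm: G(i,j,z) generating function}). Your Step~2, propagating forward by the eigenvalue observation $\mathbf{E}[\alpha^{(\cdot+\eta)_1}\beta^{(\cdot+\eta)_2}]=(\sum q_{k,\ell}\alpha^k\beta^\ell)\alpha^i\beta^j$ and summing geometric series, produces exactly the same formulas; the paper instead constructs $G$ via the compensation ansatz, Taylor-expands it, and then verifies the coefficients against the same recurrence — that ordering is chosen mainly to illustrate that the compensation machinery collapses cleanly in the finite-orbit case, an emphasis your more direct route does not make but also does not need.
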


Before turning to the proof of Proposition~\ref{prop: f_n finite sum}, we briefly explain our motivation for presenting this particular example. This model has the double advantage that the survival probability $f_0$ can be explicitly computed, and that it takes the form of a finite sum of product terms. Such models are extremely rare; they typically arise in the context of small-step walks in the quarter plane, when a certain group naturally associated with the model is finite. See \cite[Chap.~4]{FaIaMa-17} and \cite[Sec.~3]{BMMi-10} for further details. In particular, Table~1 in \cite{BMMi-10} lists models with a finite group, for which the quantity $f_0$ can, in principle, be expressed as a finite sum of exponential terms. For these models, the explicit expression is given in \cite[Cor.~8]{KuRa-11}.

The question of when the stationary distribution of a reflected random walk in the quarter plane can be expressed as a finite sum of exponential terms is addressed, and partially resolved, in \cite{ChBoGo-15,ChBoGo-20}.

Moreover, although this random walk is not a special case of the second model considered in Section~\ref{sec:model-2}, but rather an instance of the first model introduced in Section~\ref{sec:model-1}, we can still apply the compensation approach to construct the generating function of the number of contacts. This demonstrates that the compensation approach extends to a broader class of models.

\begin{proof}[Proof of Proposition~\ref{prop: f_n finite sum}]
Let us begin by introducing the kernel of the model:
\begin{equation*}
    K(\alpha,\beta ) = \frac{3}{8}\alpha^2\beta +  \frac{3}{8}\alpha\beta^2 + \frac{1}{8}\alpha + \frac{1}{8}\beta - \alpha\beta.
\end{equation*}
and its associated level set $\mathcal{K}$ as in \eqref{eq:level_set_K}.
We construct a sequence $\{(\alpha_m,\beta_m)\}_{m\geq 0}$ recursively as follows:
\begin{itemize}
    \item $\alpha_0=\beta_0 =1$;
    \item For any $m\geq 1$, $\alpha_m$ is the root of $K(\cdot,\beta_{m-1})$ such that $\alpha_m\not=\alpha_{m-1}$;
    \item For any $m\geq 1$, $\beta_m$ is the root of $K(\alpha_m,\cdot)$ such that $\beta_m\not=\beta_{m-1}$.
\end{itemize}
Under this construction, we observe that after a finite number of iterations, the pair $(\alpha_m, \beta_m)$ returns to the base value $(\alpha_0, \beta_0)$, so that only four distinct pairs actually appear:
\begin{equation*}
    (\alpha_0,\beta_0) = (1,1),\quad 
    (\alpha_1,\beta_0) = \bigl(\tfrac{1}{3},1\bigr),\quad
    (\alpha_1,\beta_1) = \bigl(\tfrac{1}{3},\tfrac{1}{3}\bigr),\quad
    (\alpha_0,\beta_1) = \bigl(1,\tfrac{1}{3}\bigr),
\end{equation*}
see Figure~\ref{fig: model examples} (right). This is a consequence of the model having a finite group in the sense of \cite{FaIaMa-17,BMMi-10}.

Let us now define the sum $G(i,j,z)$ as
\begin{equation}\label{eq: G(i,j,z) finite sum}
    G(i,j,z) = c_0(z)\alpha_0^i\beta_0^j + d_1(z)\alpha_1^i\beta_0^j + c_1(z)\alpha_1^i\beta_1^j + d_0(z) \alpha_0^i\beta_1^j,
\end{equation}
where 
\begin{align*}
    c_0(z) &= 1,\\
    d_1(z) &= -c_0(z)\frac{1-z\sum_{k,\ell}q_{k,\ell}\alpha_0^k\beta_0^\ell}{1-z\sum_{k,\ell}q_{k,\ell}\alpha_1^k\beta_0^\ell} = -\frac{1-z}{1-z\sum_{k,\ell}q_{k,\ell}\alpha_1^k\beta_0^\ell} ,\\
    c_1(z) &= -d_1(z) \frac{1-z\sum_{k,\ell}q_{k,\ell}\alpha_1^k\beta_0^\ell}{1-z\sum_{k,\ell}q_{k,\ell}\alpha_1^k\beta_1^\ell}=\frac{1-z}{1-z\sum_{k,\ell}q_{k,\ell}\alpha_1^k\beta_1^\ell},\\
    d_0(z) &= -c_1(z) \frac{1-z\sum_{k,\ell}q_{k,\ell}\alpha_1^k\beta_1^\ell}{1-z\sum_{k,\ell}q_{k,\ell}\alpha_0^k\beta_1^\ell}=-\frac{1-z}{1-z\sum_{k,\ell}q_{k,\ell}\alpha_0^k\beta_1^\ell}.
\end{align*}
One can easily check that the sums $\bigl( c_0(z)\alpha_0^i\beta_0^j + d_1(z)\alpha_1^i\beta_0^j \bigr)$ and $ \bigl( c_1(z)\alpha_1^i\beta_1^j + d_0(z) \alpha_0^i\beta_1^j \bigr)$ satisfy Eq.~\eqref{eq: vertical condition F(i,j,z)}, while the sums $\bigl( c_0(z)\alpha_0^i\beta_0^j + d_0(z)\alpha_0^i\beta_1^j \bigr)$ and $ \bigl( d_1(z)\alpha_1^i\beta_0^j + c_1(z) \alpha_1^i\beta_1^j \bigr)$ satisfy Eq.~\eqref{eq: horizontal condition F(i,j,z)}. Together with the fact that each term of $G(i,j,z)$ satisfies Eq.~\eqref{eq: recurrence relation F(i,j,z)} and $G(i,j,1)=1$, it implies that $G(i,j,z)$ satisfies the full system~\eqref{eq: recurrence relation F(i,j,z)}--\eqref{eq: terminal condition F(i,j,1)}.

Specializing \cite[Cor.~8]{KuRa-11} to our current model, one readily verifies that the survival probability is
    \begin{equation*}
        f_0(i,j) = \bigl( 1 - \tfrac{1}{3^i} \bigr) \bigl( 1 - \tfrac{1}{3^j} \bigr),
    \end{equation*}
    which is indeed equal to $\bigl( \alpha_0^i\beta_0^j -\alpha_1^i\beta_0^j + \alpha_1^i\beta_1^j -\alpha_0^i\beta_1^j \bigr)$. The Taylor expansion of $G(i,j,z)$ in the variable $z$ yields the expressions for $f_n$ with $n \geq 1$, which can be readily seen to satisfy the recurrence relation stated in Theorem~\ref{thm: f_n(x) recurrence relation}. This is sufficient to conclude that $G(i,j,z)$ is indeed the generating function of $Z_{i,j}$.
\end{proof} 

As a concrete example, let us consider the same model with reflections at the boundary given by the weights $v_{1,1}=h_{1,1}=q_{1,1}=1$. Applying Proposition~\ref{prop: f_n finite sum}, we find:
\begin{equation*}
\left\{\begin{array}{rcl}
    f_0(i,j) &=& \displaystyle 1 - \frac{1}{3^i} + \frac{1}{3^i 3^j} - \frac{1}{3^j},\smallskip \\
    f_n(i,j) &=&  \displaystyle  \frac{2}{3^n 3^i} - \frac{8}{9^n 3^i 3^j} + \frac{2}{3^n 3^j},\quad n\geq 1.
    \end{array}\right.
\end{equation*}

This example provides evidence that the compensation approach can be effectively applied to non-singular random walks associated with finite groups; or, phrased more in line with the present context, to situations where the sequence of pairs $\{(\alpha_m,\beta_m)\}_{m\geq 0}$ takes only finitely many distinct values, provided that the reflections on the vertical and horizontal axes are identical. In contrast, for non-singular walks with arbitrary boundary reflections, the present computational framework does not produce a valid solution to the system~\eqref{eq: recurrence relation F(i,j,z)}--\eqref{eq: terminal condition F(i,j,1)}. Accordingly, the analysis of this more general case remains an open problem.

\subsection*{Acknowledgments}
KR gratefully acknowledges the Vietnam Institute for Advanced Study in Mathematics in Hanoi and the Industrial University of Ho Chi Minh City for their excellent working conditions during his visits while this paper was being prepared. Both authors thank Nicholas Beaton and Rodolphe Garbit for stimulating and insightful discussions.

\end{document}